\newcommand{\be}{\begin{equation}}
\newcommand{\ee}{\end{equation}}
\newcommand{\R}{\mathbb{R}}
\newcommand{\E}{\mathbb{E}}
\newcommand{\PP}{\mathbb{P}}
\newcommand{\st}{\mathrm{s.t.}}
\newcommand{\Let}{\triangleq}
\newcommand{\Nx}{N_x}
\newcommand{\Ny}{N_y}
\newcommand{\Nd}{N_d}
\newcommand{\diag}{\mathrm{diag}}
\newcommand{\opt}{^\star}
\newcommand{\Wass}{\mathds W}
\newcommand{\tr}{\mathrm{tr}}
\newcommand{\te}{\mathrm{te}}
\newtheorem{theorem}{Theorem}
\newtheorem{lemma}{Lemma}
\newtheorem{proposition}{Proposition}
\newtheorem{definition}{Definition}
\newtheorem{assumption}{Assumption}
\begin{document}
\onehalfspacing
\title{Asymptotically Optimal Distributionally Robust Solutions through Forecasting and Operations Decentralization}

\author{
Yue Lin, 
Daniel Zhuoyu Long,
Viet Anh Nguyen,
Jin Qi}
\thanks{\small The authors are with the Chinese University of Hong Kong (\texttt{yuelin@link.cuhk.edu.hk, zylong@se.cuhk.edu.hk, nguyen@se.cuhk.edu.hk}) and Hong Kong University of Science and Technology (\texttt{jinqi@ust.hk})}

\begin{abstract}
Two-stage risk-averse distributionally robust optimization (DRO) problems are ubiquitous across many engineering and business applications. In these problems, decision-makers commit to capacitated first-stage decisions, anticipating that they can execute second-stage recourse decisions after observing the realization of the uncertain parameters. 
Despite their promising resilience, two-stage DRO problems are generally computationally intractable. To address this challenge, we propose a simple framework by decentralizing the decision-making process into two specialized teams: forecasting and operations. This decentralization aligns with prevalent organizational practices, in which the operations team uses the information communicated from the forecasting team as input to make decisions. 
We formalize this decentralized procedure as a bilevel problem to design a communicated distribution that can yield asymptotic optimal solutions to original two-stage risk-averse DRO problems. 
We identify an optimal solution that is surprisingly simple: The forecasting team only needs to communicate a two-point distribution to the operations team. 
Consequently, the operations team can solve a highly tractable and scalable optimization problem to identify asymptotic optimal solutions. 
Specifically, as the magnitude of the problem parameters (including the uncertain parameters and the first-stage capacity) increases to infinity at an appropriate rate, the cost ratio between our induced solution and the original optimal solution converges to one, indicating that our decentralized approach yields high-quality solutions. 
We compare our decentralized approach against the truncated linear decision rule approximation and demonstrate that our approach has broader applicability and superior computational efficiency while maintaining competitive performance. 
Using real-world sales data, we have demonstrated the practical effectiveness of our strategy. The finely tuned solution significantly outperforms traditional sample-average approximation methods in out-of-sample performance. 
\end{abstract}

\maketitle

\thispagestyle{firstpage}

\section{Introduction}

Many businesses make decisions in a two-stage process to hedge against the inherent uncertainty of the environment. In an assemble-to-order (ATO) system facing uncertain demand, managers first determine component order quantities based on demand forecasts. After the demand is realized, they deploy the assembly plans that minimize \textit{dis}utility while adhering to the available component inventory~\citep{ref:devalve2020primal}. 
This sequential decision-making pattern extends beyond ATO systems to various operational contexts, including multi-item inventory control, facility location, and appointment scheduling (see~\citet{ref:hanasusanto2015distributionally},~\citet{ ref:snyder2006facility}, and~\citet{ref:gupta2008appointment}). 

Operational success in sequential decision-making fundamentally relies on a company's ability to predict and hedge against uncertainty. 
The characteristics of the available forecasts crucially shape the hedging strategies employed.
Companies with probabilistic forecasts of uncertain parameters can employ stochastic programming to optimize expected outcomes~\citep{ref:birge2011introduction, ref:shapiro2021lectures}. 
However, precise distributional information rarely exists in practice, and estimated distributions often yield poor or harmful decisions, which can suffer from estimation errors and distributional shifts~\citep{ref:smith2006optimizer}.
When companies only have the support set information, robust optimization offers an alternative approach that guards against the worst-case outcome by optimizing for the most adverse scenario within the support set. Although robust optimization improves reliability under severe uncertainty, it may lead to overly conservative decisions when support sets are too large or incorrectly specified~\citep{ref:bertsimas2018data}. 

When historical data are available, companies can estimate partial distributional characteristics, such as support, moments, and frequency. In such cases, distributionally robust optimization (DRO), also known as robust stochastic optimization, has emerged as a promising framework to manage uncertainty~\citep{ref:delage2010distributionally, ref:rahimian2019distributionally}. 
DRO first constructs an ambiguity set using the estimated partial knowledge and then optimizes against the worst-case distribution within this set. It effectively combines the risk-averse principle of robust optimization with the probabilistic modeling of stochastic programming.
Although single-stage DRO problems have seen significant progress, two-stage DRO problems within sequential decision-making frameworks remain computationally challenging~\citep{ref:bertsimas2010models}. 
The fundamental difficulty lies in second-stage decisions being infinite-dimensional functions of uncertainty, which typically render these problems intractable.
Researchers have proposed restricting second-stage decisions to parametric functions, thereby recovering tractability by reducing the optimization to a finite number of parameters.
~\citet{ref:see2010robust} applied a truncated linear replenishment policy to the multi-period inventory control problem under ambiguous demands and showed that the objective value is closer to optimal than the static and linear replenishment policy.
~\citet{ref:bertsimas2019adaptive} focused on a class of second-order conic representable ambiguity set and approximated the problem using an extended linear decision rule by incorporating auxiliary random variables. They showed that the approximation is optimal when the problem has complete recourse and one-dimensional adaptive decision.
~\citet{ref:long2024supermodularity} solved a class of two-stage DRO problems under the scenario-based ambiguity set and investigated the optimality of the segregated affine decision rules when second-stage problems have supermodularity properties. 

We propose a novel framework to address the complexity inherent in two-stage DRO problems by decentralizing the decision-making process into two specialized teams: forecasting and operations. This separation follows practical organizational structures, where many companies strategically separate forecasting from operational decision-making to capitalize on specialized expertise~\citep{ref:scheele2018designing}. For instance, consumer goods companies typically assign demand forecasting to sales and marketing teams, while production and inventory decisions fall under operations teams~\citep{ref:Shapiro1977,ref:lee2017task}. In logistics companies, network planners design and optimize transportation routes based on the provided load forecasts~\citep{ref:lindsey2016improved,ref:bruys2024confidence}. This separation helps prevent informational deficiencies and functional biases often arising from poor cross-functional communication, as elaborated by~\citet{ref:oliva2009managing}. 
Consequently, the forecasting team predicts uncertainties, while the operations team uses these forecasts to make decisions. The sequential interaction between two teams constitutes a one-leader-one-follower Stackelberg game, where the forecasting team acts as the leader by providing uncertainty information, and the operations team acts as the follower by making decisions based on the received information. Stackelberg game models have applications in various complicated problems, such as decentralized supply chain, smart grid, and telecommunication~\citep{ref:kang2012price, ref:maharjan2013dependable, ref:fu2018profit}. 

Within the decentralized framework, the forecasting team communicates a probability distribution derived from the available knowledge to model uncertainty. 
The operations team then uses this communicated distribution as input to solve a two-stage stochastic programming problem. 
When sufficient and reliable historical data exist, the forecasting team can communicate the empirical distribution, leading to the sample-average approximation (SAA) method. However, real-world data are often limited, noisy, or not fully representative of the underlying distribution. 
In such cases, communicating empirical distributions can lead to suboptimal or unreliable decisions~\citep{ref:smith2006optimizer}. 
Hence, this paper works on designing an appropriate distribution for the forecasting team to communicate, aiming to steer the operations team to develop high-quality solutions to the original two-stage DRO problems and thereby ensure the resilience of these solutions.

Given the intractability of the original two-stage DRO problems, we evaluate the solution quality through asymptotic analysis under specific scaling schemes. 
Asymptotic analysis is a powerful tool for quantifying the degree of suboptimality and has applications in many academic disciplines, including probability theory, operations research, statistics, and computer science. 
It examines the performance of suboptimal solutions relative to optimal ones when problem parameters - arrival rate, unit cost, time horizon, network size, and capacity - grow to infinity~\citep{ref:chen2019welfare, ref:wang2022constant, ref:ledvina2022new, ref:bu2023asymptotic}. 
Large problem parameters often pose significant computational challenges, prompting researchers to develop heuristic solutions and analyze their asymptotic performance to ensure scalability and efficiency. 
In the context of DRO problems,~\citet{ref:van2021data} considered a single-stage optimization problem and demonstrated that the DRO solution is asymptotic optimal in the sense that the out-of-sample disappointment vanishes as the sample size goes to infinity. 
This result implies that the DRO approach effectively learns the data-generating distribution as more data become available and the optimality gap vanishes. 
The asymptotic regime of our paper is orthogonal to that in~\citet{ref:van2021data} as we scale the magnitude of random variables while maintaining considerable uncertainty about the underlying distribution. 
This scaling scheme reflects real-world situations where operational scales increase (e.g., larger markets, higher demands, expanded networks) while the inherent uncertainty remains pronounced due to complex and unpredictable factors. 

In addition to aversion to distributional ambiguity, we incorporate risk aversion into our model. It is essential to distinguish between risk aversion and ambiguity aversion~\citep{ref:bertsimas2010models, ref:fu2018profit, ref:cai2023distributionally}. Risk aversion refers to situations where the probabilities of possible outcomes are known (known risks). In contrast, ambiguity aversion pertains to situations where the probability distribution is unknown (i.e., unknown risks). Consequently, this paper concentrates on two-stage risk-averse DRO problems, where the second-stage deployment decisions are subject to the first-stage planning decisions and uncertainty realizations. 

\noindent\textbf{Contributions.} Our contributions are summarized as follows:
\begin{itemize}
    \item We propose a decentralized framework to simplify two-stage DRO problems. This framework models the collaboration between the forecasting and operations teams in the leader-follower dynamic of the Stackelberg games. The forecasting team leads by choosing which probability distribution to communicate, while the operations team follows by making decisions based on this communicated distribution. This decentralization aligns with prevalent organizational structures in the industry.
    We formalize this decentralized decision-making framework as a bilevel optimization problem. 
    Given a communicated distribution, the operations team solves a two-stage stochastic program to induce the first-stage decision at the lower level. At the upper level, the forecasting team determines which distribution to communicate by optimizing the limiting cost ratio between the induced first-stage (lower-level) solution and the optimal solution of the original two-stage DRO problem. 
    \item We explore two popular specifications of the ambiguity sets -- the moment-based set characterized by the mean and standard deviation and the data-driven Wasserstein ambiguity set. In both cases, we identify an optimal solution to the bilevel problem that is surprisingly simple: a two-point distribution suffices. This result has profound practical implications because the operations team can solve a tractable optimization problem (which is reduced to a linear program under mild assumptions) to obtain high-quality first-stage solutions. Our decentralized framework eliminates the computational \textit{in}tractability famously associated with two-stage DRO problems. Moreover, we prove that the optimal value of the bilevel problem is one, which implies that our induced first-stage solution is \textit{asymptotically optimal} to the original two-stage DRO problem. 

    \item We conduct extensive numerical experiments to showcase the performance of our framework. In the most important experiment, we collect real sales data from a leading Chinese near-expiration goods company, which employs a decentralized decision-making structure with dedicated forecasting and operations teams. Our numerical experiments focus on risk-averse scenarios, where we show that our finely-tuned solution significantly outperforms traditional sample-average approximation solutions in the out-of-sample performance. 
\end{itemize}

This paper unfolds as follows. Section~\ref{sec:2-stage-DRO} overviews preliminaries about the centralized two-stage risk-averse DRO problems and outlines necessary assumptions. Section~\ref{sec:forecast-operations} introduces the decentralized framework and formalizes the bilevel optimization problem. Section~\ref{sec:moment} designs optimal mechanisms for moment-based ambiguity sets and provides the optimality analysis. In Section~\ref{sec:wass}, we propose optimal mechanisms for the data-driven Wasserstein robustness setting. We present the numerical studies in Section~\ref{sec:numerical} and conclude our work in Section~\ref{sec:conclusion}.

\noindent
\textbf{Notations.} Let $\R^N$ represent the $N$-dimensional space of real vectors, with $\R_+^N$ indicating the subspace of non-negative real vectors. Uppercase letters (e.g., $A$, $G$, $H$, $I$) denote matrices, while lowercase letters (e.g., $x$, $b$, $p$) represent scalars or vectors, with $x_i$ indicating the $i$-th element of vector $x$. The boldface lower-case letters such as $\mathbf{y}$ represent functions. A random vector and an arbitrary distribution are denoted with a lowercase letter with a tilde and $\PP$, respectively, such as $\tilde d \sim \PP$, where $\tilde d\in\R^{\Nd}$ is a random vector, and $\PP$ represents the distribution of $\tilde d$. We denote the space of Borel-measurable functions from $\R^{\Nd}$ to $\R$ by $\mathcal L_0$. We denote by $\mathcal M_2$ the set of all probability distributions on the Borel $\sigma$-algebra on $\R^{\Nd}$ with finite second moments, i.e., $\E_{\PP}[\lVert \tilde d \rVert^2]<\infty$ for any $\PP\in\mathcal{M}_2$. Unless otherwise specified, the norm $\lVert \cdot \rVert$ represents the Euclidean norm. Vector operations are elementwise; for example, $u\le v$ implies that each $u_i \le v_i$, and $w=\max\{u,v\}$ signifies that $w_i = \max\{u_i,v_i\}$ for all components.

\section{Preliminaries} \label{sec:2-stage-DRO}

Consider a two-stage decision-making process where a centralized decision-maker must determine the first-stage (here-and-now) decision $x\in\R_+^{\Nx}$ \textit{before} the realization of a non-negative random demand vector $\tilde d \in \R_+^{\Nd}$. The feasible set of first-stage decisions is a nonempty polyhedron defined by $\mathcal X (b) = \{x \in \R_+^{\Nx}: Gx\leq b\}$, where $G\in\R^{K\times\Nx}$ and $b\in \R_+^K$ are known parameters. The parameter $b\in\R_+^{K}$ represents the operational budget, capturing constraints such as warehouse capacity, financial allocations, and other resource limitations. Let $c\in\R_+^{\Nx}$ denote the unit costs of the first-stage decisions. Upon observing the demand realization $d$, the decision-maker finds the optimal second-stage (wait-and-see) decision $y\in\R_+^{\Ny}$ by solving the linear program
\begin{equation} \label{eq:g}
g(x, d) = \left\{
    \begin{array}{cl}
    \min & -p^\top y \\
    \st  & y\in\R_+^{\Ny} \\
        & Ay\leq x,~ Hy\leq d,
    \end{array}
    \right.
\end{equation}
where $p \in \R_+^{\Ny}$ is the unit price vector. The non-negative matrices $A \in \R_+^{\Nx \times \Ny}$ and $H \in \R_+^{\Nd \times \Ny}$ specify the linear constraints that link the second-stage decision $y$ with the first-stage decision $x$ and the realization of demand $d$, respectively. 
To avoid degeneracy, we assume that $A$ and $H$ are nonzero matrices. The second-stage problem~\eqref{eq:g} has fixed recourse as uncertain parameters appear only on the right-hand side of the constraints and not within the matrices $A$ and $H$. 
Furthermore, for any first-stage decision $x\in\mathcal{X}(b)$ and possible demand realization $d$, problem~\eqref{eq:g} has a nonempty and bounded feasible region, ensured by non-negative parameters $(p, A, H)$ and decision vector $y$. Consequently, $g(x,d)$ is always finite as the optimal value of a linear program with a nonempty bounded feasible region, which implies that the two-stage problem has relatively complete recourse.

Suppose that $\tilde d $ follows a $\PP$, i.e., $\tilde d \sim \PP$, we use a risk measure $\varrho_{\PP}(\cdot)$ to quantify the risk under the distribution $\PP$. 
In stochastic programming, the goal is to minimize the sum of first-stage deterministic cost $c^\top x$ and the second-stage risk $\varrho_{\PP}(g(x,\tilde d))$. 
However, the complete demand distribution is rarely available in practice. 
We handle distributional uncertainty through an ambiguity set $\mathcal A(\theta)$, which contains all plausible distributions of $\tilde d$ that decision-makers aim to hedge against. 
The parameter $\theta$ encodes the available uncertainty information used to define the ambiguity set. 
For instance, in a moment-based DRO framework, $\theta$ includes moment information such as means and standard deviations of the marginal distributions. In a data-driven DRO framework, $\theta$ consists of a nominal distribution and a radius that define the center and size of the ambiguity set, respectively. 

Under the budget $b$ and the uncertainty information $\theta$, for any feasible first-stage decision $x \in \mathcal{X}(b)$, we define the objective function as
\begin{equation}\label{eq:obj}
\mathrm{Obj}(x,b,\theta) \Let c^\top x + \sup\limits_{\PP \in \mathcal A(\theta)} \varrho_{\PP}(g(x,\tilde d) ),
\end{equation}
which represents the total cost comprising the first-stage deterministic cost and the worst-case risk arising from the second-stage optimal cost function $g$. 
The terms $\sup_{\PP\in \mathcal A(\theta)}$ and $\varrho_{\PP}$ capture ambiguity aversion and risk aversion, respectively. 
Although the parameter $b$ does not appear explicitly on the right-hand side of~\eqref{eq:obj}, we include it to emphasize that $x\in\mathcal{X}(b)$.
The two-stage risk-averse DRO problem then seeks to minimize this objective as
\begin{equation} \label{eq:2stage-DRO}
\mathrm{OPT}(b,\theta)\Let
\min_{x \in \mathcal X (b)}~ \Big\{ c^\top x+ \sup\limits_{\PP \in \mathcal A(\theta)} \varrho_{\PP}(g(x,\tilde d ) ) \Big\}.
\end{equation}
Problem~\eqref{eq:2stage-DRO} faces significant computational challenges~\citep{ref:bertsimas2010models} due to two key factors: the min-max structure of the objective function and the second-stage optimal cost function $g(x,\cdot)$.
We will overcome these difficulties through a decentralized decision-making framework that restores tractability while preserving solution quality.

Before diving into the decentralized framework, we establish several standard assumptions regarding the risk measure $\varrho$. 
As there are infinitely many probability measures within $\mathcal A(\theta)$, we need to consider not a single risk measure, but a \textit{family} of risk measure  $\{\varrho_{\PP}\}_{\PP \in \mathcal M_2}$, where $\mathcal M_2$ is the set of probability distributions with finite second moments. 
We use the notion of a law-invariant family of risk measures, which extends the law-invariant property to various probability measures.
\begin{definition}[Law-invariant family of risk measures] \label{def:risk:measure:family}
    The family of risk measures $\{\varrho_{\PP}\}_{\PP \in \mathcal M_2}$ is law-invariant if $\varrho_{\PP_1}(\ell_1)=\varrho_{\PP_2}(\ell_2)$ for any measurable loss functions $\ell_1,~\ell_2\in\mathcal L_{0}$ and probability distributions $\PP_1,~\PP_2\in\mathcal M_2$ satisfying that the distribution of $\ell_1(\tilde d)$ under $\PP_1$ matches the
    distribution of $\ell_2(\tilde d)$ under $\PP_2$.
\end{definition}
To avoid clutter, we will omit the subscript $\PP \in \mathcal M_2$ in the family of risk measures $\{ \varrho_{\PP}\}$. Next, we review several basic properties of risk measures.
\begin{definition}[Properties of risk measures] \label{def:risk:measure}
	A risk measure $\varrho_{\PP}$ associated with a probability measure~$\PP$ is
	\begin{itemize}[leftmargin=.2in]
		\item translation invariant  if $\varrho_\PP(\tilde Z + t) = \varrho_\PP(\tilde Z) + t $ for all $\tilde Z \in \mathcal L_{0}$, $t\in \mathbb{R}$;
		\item positive homogeneous if $\varrho_\PP(\lambda \tilde Z) = \lambda \varrho_\PP(\tilde Z)$ for all $\tilde Z \in \mathcal L_{0}$, $ \lambda \in \mathbb{R}_+$;
		\item monotonic if $\varrho_\PP(\tilde Z ) \leq \varrho_\PP(\tilde Z ^\prime )$ for all $\tilde Z, \tilde Z^\prime \in \mathcal L_{0}$ such that $\tilde Z \leq \tilde Z ^\prime $ $\PP$-almost surely;
		\item closed if the set $\{ \tilde Z \in \mathcal L_{0} | \varrho_{\PP} (\tilde Z) \le t \}$ is closed \footnote{A sequence of random variables $\tilde Z ^k,k \in \mathbb{Z}_+$ converges to a random variable $\tilde Z$ if $\lVert \tilde Z^k - \tilde Z \rVert \to 0$ where $\lVert \tilde Z \rVert \Let (\E[\tilde Z^2])^{1/2}$.} for all $t\in\R$.
	\end{itemize}
\end{definition}
The first three properties are common and firmly established in the pioneering work of~\citet{ref:artzner1999coherent}. To define coherent risk measure rigorously,~\citet{ref:rockafellar2007coherent} emphasized the notion of closedness to include the case where the underlying distribution $\PP$ has an infinite support set. The closedness says that if a random variable $\tilde Z$ can be approximated arbitrarily closely by acceptable random variables $\tilde Z_k$ satisfying $\varrho_{\PP} (\tilde Z_k) \le t$, then $\tilde Z$ should also be acceptable, i.e., $\varrho_{\PP} (\tilde Z) \le t $.
Many popular risk measures satisfy the property of closedness; see~\citet {ref:rockafellar2007coherent} for a discussion.

A law-invariant family of risk measures admits a scalar, known as the standard risk coefficient, which connects the risk measure with moment information~\citep{ref:yu2009general, ref:nguyen2021mean}.
\begin{definition}[Standard risk coefficient]\label{def:standard:risk:coefficient}
    The standard risk coefficient $\alpha$ of a family of risk measures $\{\varrho_{\PP} \}$ is the worst-case risk induced by any random variable with zero mean and unit variance: 
    \[
    \alpha \Let \sup \{  \varrho_{\PP}(\tilde Z) ~:~ \E_{\PP}[\tilde Z] = 0,~\E_{\PP}[\tilde Z^2] = 1 \}.
    \]
\end{definition}
We formalize our risk measure assumptions below without requiring convexity. 
\begin{assumption}[Risk measures] \label{a:risk-measure}
    The family $\{\varrho_{\PP}\}$ is a family of law-invariant, translation-invariant, positive homogeneous, monotonic, and closed risk measures. Further, its standard risk coefficient $\alpha$ is non-negative.
\end{assumption}
Many risk measures satisfy Assumption~\ref{a:risk-measure}. Table~\ref{table:risk} presents three examples with their standard risk coefficients. Additional examples can be found in~\citet{ref:nguyen2021mean}.
\begin{table}[h!]
\centering
\caption{Popular risk measures and their standard risk coefficients.}
\label{table:risk}
\begin{tabular}{ccc}
\hline
Risk measure & Definition and parameters & $\alpha$ \\
\hline
Conditional Value-at-Risk &
$
\inf_{t \in \mathbb{R}}
\left\{
t + \frac{1}{\beta} \mathbb{E}_\PP[\max\{\ell(\tilde d) - t, 0 \}]
\right\},
\beta \in (0,1)
$ &
$
\sqrt{\frac{1 - \beta}{\beta} }
$ \\
Value-at-Risk &
$
\inf_{ t \in \mathbb{R}} \left\{ \PP(\ell(\tilde{d}) \leq t ) \geq 1 - \beta\right\},
\beta \in (0,1)
$ &
$
\sqrt{\frac{1 - \beta}{\beta} } 
$ \\
Mean-semideviation &
$
\mathbb{E}_\PP[\ell(\tilde d)] + \lambda \mathbb{E}_\PP\big[\big(\ell(\tilde d) - \mathbb{E}_\PP[\ell(\tilde d)]\big)^\beta_+\big]^{\frac{1}{\beta}},
\lambda \in[0,1], \beta \in [1,2]
$ &
$\lambda$ \\
\hline
\end{tabular}
\end{table}

\section{Decentralization and Mechanism Design via Bilevel Programming} \label{sec:forecast-operations}

In many companies, different departments are responsible for forecasting and operational decision-making~\citep{ref:lee2017task,ref:scheele2018designing}.
This organizational structure motivates us to decentralize the two-stage DRO solution process. Figure~\ref{fig:framework} compares the centralized (conventional) two-stage DRO solution process and our decentralized leader-follower proposal. 
\begin{figure}[!h]

    \centering
    \includegraphics[trim = 280pt 80pt 0pt 100pt, clip,width=1.15\linewidth]{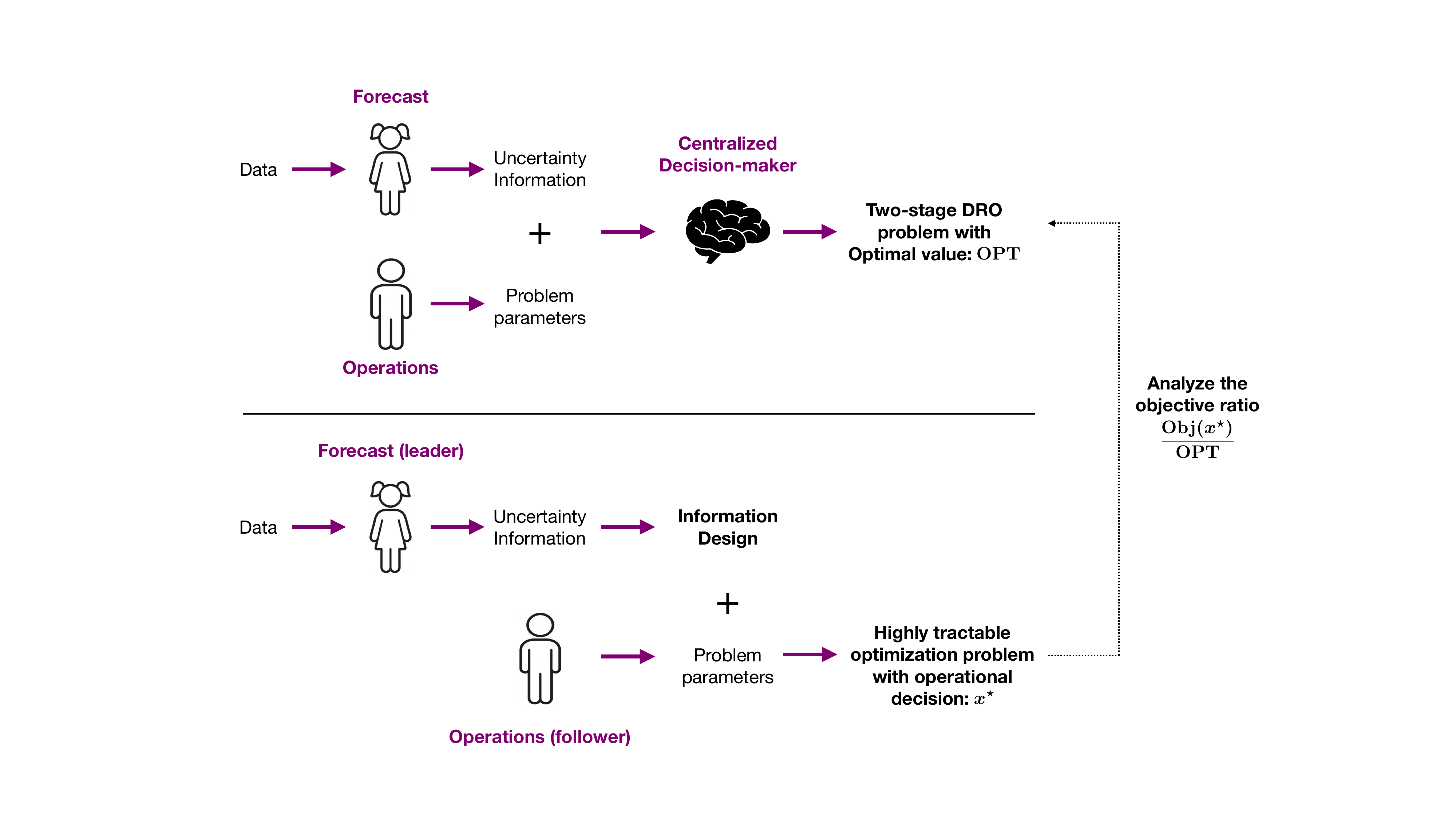}
    \caption{Conventional process (top) requires forecasting and operations teams to solve the problem together, leading to a two-stage DRO formulation that is computationally intensive. The leader-follower decentralization (bottom) allows the forecasting team (leader) to design the information sent to the operations team (follower). This decentralization can lead to simpler optimization problems to be solved by the operations team while guaranteeing that the induced first-stage decision is competitive.}
    \label{fig:framework}
    
\end{figure}

The forecasting team is the leader in the decentralized setting; it designs a mechanism that converts the uncertainty information into a simpler distribution. Subsequently, the operations team, in the follower role, uses the communicated distribution as input to optimize decisions. The critical challenge lies in finding a simple mechanism that allows the operations team to solve a highly tractable optimization problem while achieving near-optimal objective values relative to the theoretical optimum $\mathrm{OPT}(b,\theta)$. 
In this section, we formalize the decentralized framework as a bilevel problem to design mechanisms yielding asymptotic optimal first-stage solutions to two-stage DRO problems.

Let $\mathcal M$ denote the space of all the mechanisms that map the uncertainty information $\theta$ to a probability distribution communicated to the operations team. If we restrict the communicated distribution within the ambiguity set $\mathcal{A}(\theta)$, we ensure its consistency with the prescribed uncertainty information. The forecasting team's task is to select a mechanism $M\in\mathcal M$ that steers the operations team to produce high-quality first-stage solutions. To this end, we first introduce the performance measure and then define the objective of the bilevel problem. For a given first-stage decision $x\in\mathcal{X}(b)$, we evaluate its performance by the ratio between its cost $\mathrm{Obj}(x,b,\theta)$ and the theoretical optimal cost $\mathrm{OPT}(b,\theta)$. Given computational challenges, we investigate the asymptotic performance of first-stage solutions under scaling schemes reminiscent of heavy-traffic scenarios in queueing systems. Specifically, we examine a sequence of problems indexed by $k$, with parameters $(b^{(k)},\theta^{(k)})$ scaling to infinity at appropriate rates (detailed in the following sections). We analyze the limiting behavior of the cost ratio $\mathrm{Obj}(x^{(k)},b^{(k)},\theta^{(k)})/\mathrm{OPT}(b^{(k)}, \theta^{(k)})$, where $x^{(k)}$ represents the first-stage solution induced by the operations team using the communicated distribution. A convergence to one indicates the asymptotic optimality of the first-stage solution to the two-stage DRO problem.

We assume that $\mathrm{OPT}(b^{(k)}, \theta^{(k)}) < 0$ for all sufficiently large $k$, implying that the business has the potential to generate profits as the market size increases. When designing the mechanism, we must exclude those that lead to an implausible first-stage decision $x^{(k)}$ with $\mathrm{Obj}(x^{(k)},b^{(k)},\theta^{(k)}) > 0$, which results in losses despite the fact that profitable opportunities are available under optimal decisions.
In the desirable scenario where $\mathrm{Obj}(x^{(k)},b^{(k)},\theta^{(k)}) <0
$, the cost ratio between $\mathrm{Obj}(x^{(k)},b^{(k)},\theta^{(k)})$ and $\mathrm{OPT}(b^{(k)}, \theta^{(k)})$ is positive and is bounded above by one as 
\[
|\mathrm{OPT}(b^{(k)}, \theta^{(k)}) | \ge | \mathrm{Obj}(x^{(k)},b^{(k)},\theta^{(k)}) |.
\]
Therefore, under the condition that $\mathrm{OPT}(b^{(k)}, \theta^{(k)}) < 0$, a larger cost ratio indicates a smaller suboptimality, with one serving as the upper bound. Consequently, the forecaster's optimal mechanism solves the following bilevel optimization problem
\begin{equation} 
    \begin{array}{cl}
        \max\limits_{M \in \mathcal M} & \lim\limits_{k \to \infty}~ \displaystyle \frac{\mathrm{Obj}(x^{(k)},b^{(k)},\theta^{(k)}) }{\mathrm{OPT}(b^{(k)}, \theta^{(k)})}   \\ [2ex]
        \st   & \forall k: x^{(k)} \in 
           \arg\min \left\{ c^\top x + \varrho_{M(\theta^{(k)})}( g (x,\tilde d)) ~:~ x \in \mathcal X (b^{(k)}) \right\}.
  \end{array}
  \label{eq:bilevel}
\end{equation}
This bilevel structure reflects the decision hierarchy: at the upper level, the forecaster selects a mechanism to maximize the limiting cost ratio, while at the lower level, the operations team solves a two-stage stochastic programming problem based on the provided distribution $M(\theta^{(k)})$ and returns the corresponding optimal first-stage solution $x^{(k)}$. 
Generally, the lower-level problem is \#P-hard even if $g(x,d)$ is the negative part of an affine function and $\tilde d$ is uniformly distributed on the unit hypercube~\citep{ref:hanasusanto2016comment}. 
Even for discrete distributions with finitely many scenarios for $\tilde d$, two-stage stochastic programming problems suffer from the curse of dimensionality as the computational complexity increases with the dimension of uncertainty~\citep{ref:shapiro2005complexity}.
Despite these hardness results, we will show that a simple mechanism is optimal for problem~\eqref{eq:bilevel} under two types of distributional ambiguity.

\section{Optimal Mechanisms for Moment-Based Ambiguity Sets}\label{sec:moment}
We begin with moment-based ambiguity sets defined through first- and second-moment information. Specifically, we focus on marginal means and standard deviations $(\mu,\sigma)$, as these statistics are more practical to estimate than full covariance matrices~\citep{ref:gao2017data}.
These marginal moments form the basis of our moment-based ambiguity set.
\begin{assumption}[Moment-based Ambiguity set] \label{a:ambiguity}
    Given $\theta=(\mu,\sigma)\in\R_+^{2\Nd}$, where $\mu$ is the mean vector, and $\sigma$ is the upper bound for marginal standard deviation vectors, the moment-based ambiguity set $\mathcal{A}(\theta)$ is
\begin{equation} \label{eq:ambiguity}
    \mathcal{A}(\theta) \Let \{\mathbb P \in \mathcal M_2 : ~ \PP(\tilde d \in \R_+^{\Nd}) =1,~       \mathbb{E}_{\PP}[ \tilde d] = \mu, ~
        \mathbb E_{\mathbb P}[(\tilde d_i - \mu_i)^2] \leq \sigma_i^2 ~\forall i \in [N_d] 
    \}.
\end{equation}
\end{assumption} 

For the scaling scheme, we first fix an arbitrary base $b^{(1)} = b \in\R_+^K$ and $\theta^{(1)}  = ( \mu, \sigma) \in \R_+^{2\Nd}$. In the $k$-th problem, the scaled parameters are 
\begin{equation} \label{eq:scaling}
    b^{(k)} = kb \quad \text{ and } \quad \theta^{(k)} = (k \mu, k^{\frac{s}{2}} \sigma) \text{ for an $s \in [1,2)$}.
\end{equation}
This scaling scheme incorporates two key features. First, the budget parameter $b^{(k)}$ and the feasible region $\mathcal X(b^{(k)})$ scale linearly with $k$. Second, while the mean vector scales linearly to $k\mu$, standard deviations scale \textit{sub}linearly to $k^{\frac{s}{2}}\sigma$. 
The motivation for this scaling approach comes from real-world observations. As the market size increases, the customers' arrival rate increases, requiring proportional budget adjustments to meet demand. Moreover, the scaling follows Taylor's law~\citep{ref:taylor1961aggregation}, which states that the variance of arrival counts within a period is proportional to the mean raised to the power of $s\in[1,2)$. This scheme accommodates the Poisson distributed demand, where the variance equals the mean, thus $s=1$.

\subsection{An Optimal Mechanism}

For a fixed base $\theta^{(1)}  = ( \mu, \sigma)$ and the scaling scheme~\eqref{eq:scaling}, we consider the mechanism $M_{\varsigma, \tau}$ parameterized by two parameters $(\varsigma, \tau)\in\R_+^{\Nd}\times \R_+$:
\begin{subequations}\label{eq:Mall}
\be \label{eq:Mtau}
    M_{\varsigma, \tau}(\theta^{(k)}) \Let (1-\tau) \delta_{d_l} + \tau \delta_{d_h},
\ee
where the locations $d_l$ and $d_h$ are defined as
\be
d_l \Let k\mu -  \sqrt{\tau/ (1-\tau) } k^{\frac{s}{2}}\varsigma\text{ and }d_h \Let k\mu + \sqrt{(1-\tau)/\tau} k^{\frac{s}{2}} \varsigma.
\ee
To avoid cluttered notation, we do not explicitly present the dependence of the locations on $\varsigma, \tau$ and $\theta^{(k)}$. Further, we restrict the parameters $(\varsigma, \tau)$ to the range
\begin{equation} \label{eq:param-range}
    (\varsigma, \tau) \in [0, \sigma] \times [0, \tau_{\max}], \quad \text{ where } \tau_{\max} \Let \frac{ \gamma^2}{1+ \gamma^2} \text{ and } \gamma \triangleq \min_{i\in[\Nd]}  \frac{\mu_{i}}{ \varsigma_{i} },
\end{equation}
\end{subequations}
and we emphasize that the above range depends only on the base demand $\theta^{(1)}$ and the choice of $\varsigma$, but not on the scaling index $k$. Note that $\varsigma$ is a vector, and the condition $\varsigma \in [0, \sigma]$ means componentwise inclusion, i.e., $\varsigma_i \in [0 ,\sigma_i]$ for all $i\in[\Nd]$. Moreover, the choice of $\varsigma$ influences $\tau_{\max}$, the upper bound for feasible $\tau$.

The mechanism $M_{\varsigma, \tau}$ outputs a two-point distribution with probability $1-\tau$ at a low demand location $d_l$ and probability $\tau$ at a high demand location $d_h$. For a clear exposition, we define $x_{\varsigma, \tau}^{(k)}$ as a solution to the lower-level problem in~\eqref{eq:bilevel} under the mechanism $M_{\varsigma, \tau}(\theta^{(k)})$ in~\eqref{eq:Mall}:
\begin{equation} \label{eq:x-opt}
x_{\varsigma, \tau}^{(k)} \in \arg\min \left\{ c^\top x + \varrho_{M_{\varsigma, \tau}(\theta^{(k)})}( g (x,\tilde d)) ~:~ x \in \mathcal X (b^{(k)})
       \right\}.
\end{equation}
We can verify that when $\varsigma = 0$ or $\tau = 0$, the mechanism $M_{\varsigma, \tau}(\theta^{(k)})$ outputs a Dirac distribution located at the scaled mean $k\mu$, with the high demand atom vanishing. Because this Dirac distribution no longer depends on $\varsigma$, we define $M_{0} (\theta^{(k)}) \Let \delta_{k\mu}$ with a slight abuse of notation, and denote by $x_{0}^{(k)}$ the lower-level solution under $M_{0} (\theta^{(k)})$ in~\eqref{eq:x-opt}. Throughout, we will refer to $M_0$ as the expectation mechanism. We next show that the family of mechanisms $M_{\varsigma, \tau}$ defined in~\eqref{eq:Mall} is optimal for problem~\eqref{eq:bilevel} under the following additional assumption. 
\begin{assumption}[Model parameters] \label{a:model-param}
Suppose that the following conditions are satisfied.
\begin{enumerate}[label=(\roman*)]
    \item \label{a:model-param-1} Each column of the matrix $H$ has one nonzero element.
    \item \label{a:model-param-2} When $k=1$, the lower-level problem under the expectation mechanism $M_{0}$ has non-trivial solutions, i.e., $c^\top x_{0}^{(1)} + \varrho_{M_{0}(\theta^{(1)})}( g (x_{0}^{(1)},\tilde d)) < 0$.
\end{enumerate}
\end{assumption}

\begin{theorem}[Optimal mechanism]\label{thm:opt}
Suppose that Assumptions~\ref{a:risk-measure},~\ref{a:ambiguity}, and~\ref{a:model-param} are satisfied, and the scaling~\eqref{eq:scaling} is in force. The mechanism $M_{\varsigma, \tau}$ defined in~\eqref{eq:Mall} is optimal for the bilevel problem~\eqref{eq:bilevel}. This means that the corresponding lower-level solution, denoted by $ x_{\varsigma, \tau} ^{(k)}$, satisfies the following asymptotic optimality property:
\[
\lim\limits_{k \to \infty}~ \displaystyle \frac{\mathrm{Obj}(x_{\varsigma, \tau}^{(k)} , b^{(k)}, \theta^{(k)}) }{\mathrm{OPT}(b^{(k)}, \theta^{(k)})} = 1.
\]
\end{theorem}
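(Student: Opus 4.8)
The plan is to prove the stronger statement that, after division by $k$, each of $\mathrm{OPT}(b^{(k)},\theta^{(k)})$, $\mathrm{Obj}(x_{\varsigma,\tau}^{(k)},b^{(k)},\theta^{(k)})$, and the lower-level optimal value $L^{(k)}\Let\min\{c^\top x+\varrho_{M_{\varsigma,\tau}(\theta^{(k)})}(g(x,\tilde d)):x\in\mathcal X(b^{(k)})\}$ converges to the deterministic ``fluid'' value $v\opt\Let\min\{c^\top\xi+g(\xi,\mu):\xi\in\mathcal X(b)\}$ obtained by freezing $\tilde d$ at its mean. Assumption~\ref{a:model-param}\ref{a:model-param-2} is exactly the statement $v\opt<0$: under the Dirac law $\delta_\mu$ the loss $g(x,\tilde d)$ is the constant $g(x,\mu)$, and translation invariance together with positive homogeneity of $\varrho$ (which yield $\varrho_\PP(0)=0$ and $\varrho_\PP(t)=t$) force $\varrho_{\delta_\mu}(g(x,\tilde d))=g(x,\mu)$, so the expectation mechanism at $k=1$ solves precisely the fluid problem and $x_0^{(1)}$ is a fluid minimizer. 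Once the three-way convergence is in hand, the ratio in the theorem is eventually the quotient of two sequences both asymptotic to $kv\opt<0$, hence tends to $1$.

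I would first record three facts. \textbf{(i) Homogeneity.} Rescaling the recourse LP~\eqref{eq:g} gives $g(\lambda x,\lambda d)=\lambda g(x,d)$ for $\lambda\ge0$, so the substitution $x=k\xi$ sends $\mathcal X(kb)$ to $k\mathcal X(b)$ and pulls a factor $k$ out of every objective. \textbf{(ii) Regularity of $g$.} Under Assumption~\ref{a:model-param}\ref{a:model-param-1} the recourse problem is bounded and LP duality gives $g(x,d)=\max_{(\lambda,\eta)\in V}(-\lambda^\top x-\eta^\top d)$, where $V$ is the fixed, finite vertex set of the dual feasible polyhedron $\{(\lambda,\eta)\ge0:A^\top\lambda+H^\top\eta\ge p\}$; hence $g(x,\cdot)$ is convex, nonincreasing, and globally Lipschitz in $d$ with modulus $\kappa\Let\max_{(\lambda,\eta)\in V}\|\eta\|$ independent of $x$, and $g(\cdot,\mu)$ is bounded below on $\mathcal X(b)$ (so the minima defining $v\opt$ and $L^{(k)}$ are attained). \textbf{(iii) Two risk identities} from Assumption~\ref{a:risk-measure}: for any finite-variance $\tilde Z\in\mathcal L_0$, writing $\tilde Z$ in standardized form and using translation invariance, positive homogeneity, law invariance and the definition of the standard risk coefficient gives $\varrho_\PP(\tilde Z)\le\E_\PP[\tilde Z]+\alpha\,\mathrm{SD}_\PP(\tilde Z)$, where $\mathrm{SD}_\PP$ is the standard deviation under $\PP$; and if $\tilde Z$ equals a larger value $a$ with probability $1-\tau$ and a smaller value $a'\le a$ with probability $\tau$, then monotonicity, translation invariance and positive homogeneity give $\varrho_\PP(\tilde Z)=(1-\rho)a'+\rho a$ for a constant $\rho=\rho(\tau)\in[0,1]$.

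Next I would pin down $L^{(k)}$ and the induced first-stage solution. Since $g(x,\cdot)$ is nonincreasing and $d_l\le k\mu\le d_h$ componentwise---the range~\eqref{eq:param-range} is exactly what keeps $d_l\ge0$ for every $k\ge1$---fact (iii) yields $\varrho_{M_{\varsigma,\tau}(\theta^{(k)})}(g(x,\tilde d))=(1-\rho)g(x,d_h)+\rho g(x,d_l)$. Dividing the lower-level objective by $k$ and substituting $x=k\xi$ turns it, by (i), into $f_k(\xi)\Let c^\top\xi+(1-\rho)g(\xi,d_h/k)+\rho g(\xi,d_l/k)$; since $s<2$ we have $d_h/k\to\mu$, $d_l/k\to\mu$, and (ii) gives $\sup_{\xi\in\mathcal X(b)}|f_k(\xi)-f_\infty(\xi)|=O(k^{s/2-1})\to0$ with $f_\infty(\xi)\Let c^\top\xi+g(\xi,\mu)$. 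Uniform convergence then gives $L^{(k)}/k=\min_{\mathcal X(b)}f_k\to\min_{\mathcal X(b)}f_\infty=v\opt$, and every scaled lower-level solution $\xi_k\Let x_{\varsigma,\tau}^{(k)}/k$ is an $o(1)$-minimizer of $f_\infty$, so $f_\infty(\xi_k)\to v\opt$.

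Finally I would bound the worst-case second-stage risk and close the sandwich. Combining (iii), the $x$-uniform modulus $\kappa$, and the moment constraints of $\mathcal A(\theta^{(k)})$ (mean $k\mu$ and marginal variances $\le k^s\sigma_i^2$, hence $\E_\PP\|\tilde d-k\mu\|\le k^{s/2}\|\sigma\|$ and $\mathrm{SD}_\PP(g(x,\tilde d))\le\kappa k^{s/2}\|\sigma\|$), one obtains, for every $x$ and every $\PP\in\mathcal A(\theta^{(k)})$,
\[
\varrho_\PP\big(g(x,\tilde d)\big)\le\E_\PP\big[g(x,\tilde d)\big]+\alpha\,\mathrm{SD}_\PP\big(g(x,\tilde d)\big)\le g(x,k\mu)+(1+\alpha)\,\kappa\,\|\sigma\|\,k^{s/2}.
\]
Applying this at $x=x_{\varsigma,\tau}^{(k)}$, with $g(x_{\varsigma,\tau}^{(k)},k\mu)=k\,g(\xi_k,\mu)$, yields $\mathrm{Obj}(x_{\varsigma,\tau}^{(k)},b^{(k)},\theta^{(k)})/k\le f_\infty(\xi_k)+O(k^{s/2-1})\to v\opt$; applying it at the feasible point $k\,x_0^{(1)}$ yields $\mathrm{OPT}(b^{(k)},\theta^{(k)})/k\le v\opt+O(k^{s/2-1})\to v\opt$ because $x_0^{(1)}$ is a fluid minimizer. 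On the other hand $M_{\varsigma,\tau}(\theta^{(k)})\in\mathcal A(\theta^{(k)})$ (its mean is $k\mu$ and its $i$-th marginal variance equals $k^s\varsigma_i^2\le k^s\sigma_i^2$), so $L^{(k)}\le\mathrm{OPT}(b^{(k)},\theta^{(k)})\le\mathrm{Obj}(x_{\varsigma,\tau}^{(k)},b^{(k)},\theta^{(k)})$; together with $L^{(k)}/k\to v\opt$ this forces all three $(\,\cdot\,)/k$ to converge to $v\opt$, and since $v\opt<0$ the ratio in the theorem tends to $v\opt/v\opt=1$. The main obstacle is the displayed estimate: proving that the supremum over the \emph{entire} moment ambiguity set of $\varrho_\PP(g(x,\tilde d))$ overshoots the nominal value $g(x,k\mu)$ by only $o(k)$. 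This is precisely where the sublinear variance scaling $s\in[1,2)$ is essential (with $s=2$ the correction term would be $\Theta(k)$ and could shift the leading constant), and where the boundedness and piecewise-linear/Lipschitz structure of the recourse function---for which Assumption~\ref{a:model-param}\ref{a:model-param-1} is invoked---plays its role; a secondary subtlety is that the fluid problem may possess several optimizers, so the $\arg\min$ need not be continuous, and robustness is obtained via the uniform convergence $f_k\to f_\infty$ on $\mathcal X(b)$ rather than by tracking a particular limit point of $\xi_k$.
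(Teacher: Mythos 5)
Your proof is correct, but it reaches the conclusion by a genuinely different technical route than the paper. The shared skeleton is the sandwich $L^{(k)}\le\mathrm{OPT}\le\mathrm{Obj}$, the membership $M_{\varsigma,\tau}(\theta^{(k)})\in\mathcal A(\theta^{(k)})$, the negativity of the fluid value (your $v\opt$ is exactly the paper's $V_0(b^{(1)},\theta^{(1)})$), and an $O(k^{s/2})$ error term killed by $s<2$. Where you diverge is in how the worst-case risk is controlled. The paper never touches $g$ directly: it upper-bounds $\mathrm{Obj}$ by restricting the recourse to truncated linear decision rules, decomposes $C-V_0$ into two pieces (Propositions~\ref{prop:loss-upper-bound} and~\ref{prop:value-upper-bound}), and bounds each via a Gallego-type inequality on $\E[\max\{0,v-U\tilde d\}]$ together with the boundedness of $\{U\ge 0: HU\le I\}$ (this is where Assumption~\ref{a:model-param}\ref{a:model-param-1} enters for them). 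You instead use the dual-vertex representation of the recourse LP to get an $x$-uniform Lipschitz modulus for $g(x,\cdot)$ in $d$, apply the mean-plus-$\alpha$-standard-deviation bound directly to $g(x,\tilde d)$, evaluate the two-point risk exactly via the affine representation $(1-\rho)a'+\rho a$, and pass to a fluid limit by uniform convergence of the scaled lower-level objective. Your route is shorter, avoids decision rules entirely, and yields the slightly stronger conclusion that all three quantities divided by $k$ converge to the common fluid value; the paper's route buys an explicit, implementable second-stage TLDR policy whose cost certifies the bound (which is then reused verbatim in the Wasserstein section and in the numerical comparison against DRO+TLDR), at the price of more machinery. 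Two small points worth making explicit if you write this up: the affine two-point identity needs law invariance to reduce $\varrho$ to a function of the two atom values, and the uniform Lipschitz constant $\kappa=\max_{(\lambda,\eta)\in V}\lVert\eta\rVert$ is finite because the dual polyhedron is pointed and the recourse LP is bounded, which is where Assumption~\ref{a:model-param}\ref{a:model-param-1} does its work in your version.
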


We present the technical results for Theorem~\ref{thm:opt} in the following subsection. 
Before that, we discuss the prerequisite Assumption~\ref{a:model-param} and analyze the properties of the mechanism $M_{\varsigma, \tau}$.
Assumption~\ref{a:model-param} summarizes conditions on model parameters that hold in many cases of practical importance. 
Assumption~\ref{a:model-param}\ref{a:model-param-1} can be satisfied in various applications by lifting the dimension of second-stage decisions, see Appendix~\ref{appendix:application}. 
Assumption~\ref{a:model-param}\ref{a:model-param-2} is widely recognized in the operations management literature~\citep{ref:kerimov2024dynamic, ref:jasin2012re} and is satisfied whenever $p \nleqslant A^\top c$. 
We will demonstrate two crucial properties of Assumption~\ref{a:model-param}\ref{a:model-param-2}. First, it implies that the lower-level problems under the expectation mechanism have non-trivial solutions for all $k \ge 1$. Second, it ensures the assumption that $\mathrm{OPT}(b^{(k)}, \theta^{(k)} ) < 0$ is satisfied for large $k$, which was presumed in establishing the bilevel problem~\eqref{eq:bilevel}. 

 \begin{figure}[htbp]
    
\centering\includegraphics[width=0.6\textwidth]{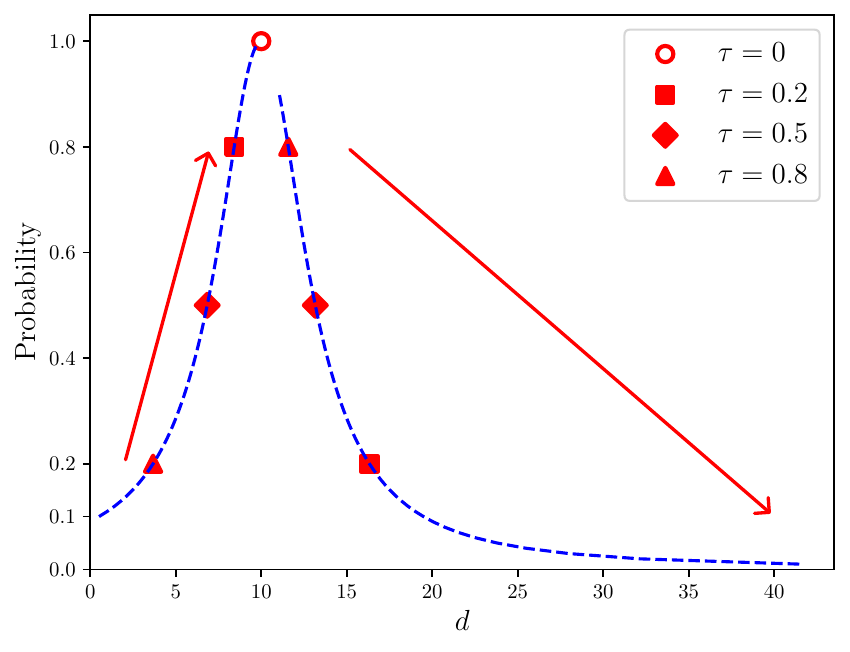}
    \caption{An illustration of the $M_{\varsigma, \tau}(\theta)$ in  one-dimension space with $\mu=10 $ and $\sigma = \varsigma =\sqrt{10} $. The upper bound for $\tau$ is $\tau_{\max} = 10/11$. The horizontal and vertical axes show the locations and probabilities of the two atoms in $M_{\varsigma, \tau}(\theta)$, respectively. As $\tau$ decreases, the right atom moves rightward with decreasing probability, while the left atom approaches $\mu=10$ with increasing probability. The limit of $M_{\varsigma, \tau}(\theta)$ is the Dirac distribution supported at the mean value $\mu$ as $\tau \downarrow 0$, with the right atom vanishing to zero probability at infinity.}
    \label{fig:Mtau}
    
\end{figure}

The mechanism $M_{\varsigma, \tau}$ possesses several desirable properties. First, $M_{\varsigma, \tau}$ is arguably simple: it generates a discrete distribution with just two scenarios -- high demand $d_h$ and low demand $d_l$ -- each with an associated probability.
This binary structure mirrors common decision-making frameworks in which practitioners often analyze best- and worst-case scenarios. 
One can also verify that $M_{\varsigma, \tau} (\theta^{(k)}) \in \mathcal{A}(\theta^{(k)})$, meaning that the mechanism $M_{\varsigma, \tau}$ outputs a distribution that lies inside the ambiguity set.\footnote{We provide a rigorous proof of these claims in the proof of Proposition~\ref{prop:value-upper-bound}.}  
Figure~\ref{fig:Mtau} illustrates how the two-point distribution changes with different values of $\tau$. The forecasting team can fine-tune $\varsigma$ and $\tau$ to optimize the induced solution's performance for practical implementation. Since $\varsigma$ is a $\Nd$-dimensional vector, we recommend setting $\varsigma = \kappa\sigma$ with a scalar $\kappa\in[0,1]$ and adjusting two parameters $(\kappa,\tau)$ within their feasible range $(\kappa,\tau) \in  [0, 1]\times [0, \tau_{\max}]$ to simplify the tuning process.

Second, the mechanism $M_{\varsigma, \tau}$ ensures the well-posedness and computational tractability of the lower-level problem in~\eqref{eq:bilevel}.
The established attainability of $g(x,\cdot)$ and the monotonicity of $\varrho$ presumed in Assumption~\ref{a:risk-measure} allow us to interchange the minimization operator and the risk functional~\citep[Proposition~2.1]{ref:shapiro2017interchangeability}. Specifically, for a probability measure $\PP$, it holds that
\be \label{eq:interchange-varrho}
\varrho_{\PP}(g(x,\tilde d)) = \left\{
\begin{array}{cl}
    \min & \varrho_{\PP}(-p^\top \mathbf{y}(\tilde d)) \\
    \st & \mathbf{y}(\tilde d) \in\R_+^{\Ny},~A\mathbf{y}(\tilde d) \le x ,~ H\mathbf{y}(\tilde d) \le \tilde d \qquad \PP\text{-almost surely},
    \end{array}
\right.
\ee
where $\mathbf{y}: \R_+^{\Nd} \to \R_+^{\Ny}$ maps the demand realization $d$ to a second-stage decision $\mathbf{y}(d)$. 
Moreover, suppose that $g(x, d)$ is attained at $\mathbf{y}\opt(d)$ for all possible realizations $d$, then $\mathbf{y}\opt(\cdot)$ also solves the optimization problem in~\eqref{eq:interchange-varrho}.
Leveraging the interchangeability, we consider the problem
 \begin{equation}\label{eq:V-tau-k}
    V_{\varsigma, \tau}(b^{(k)},\theta^{(k)}) \Let \left\{ 
    \begin{array}{cl}
        \min & c^\top x +  \varrho_{M_{\varsigma, \tau}(\theta^{(k)})}( -p^\top \mathbf{y}(\tilde d ) ) \\
        \st & x \in \mathcal X (b^{(k)}),~\mathbf{y}(d_l)\in \R_+^{\Ny},~\mathbf{y}(d_h)\in \R_+^{\Ny} \\
            & A  \mathbf{y}(d_l) \le x ,~ H  \mathbf{y}(d_l) \le d_l \\
            &  A  \mathbf{y}(d_h) \le x ,~ H  \mathbf{y}(d_h) \le d_h,
    \end{array} 
    \right. 
\end{equation}
whose optimal solution in the variable $x$ is $x_{\varsigma, \tau}^{(k)}$ set out in~\eqref{eq:x-opt}.
Problem~\eqref{eq:V-tau-k} integrates two second-stage recourse actions $\mathbf{y}(d_l)$ and $\mathbf{y}(d_h)$ into the outer problem. Consequently, problem~\eqref{eq:V-tau-k} is a one-stage problem in the sense that all decisions $x$, $\mathbf{y}(d_l)$ and $\mathbf{y}(d_h)$ are now determined simultaneously.
Reformulating the lower-level problem as problem~\eqref{eq:V-tau-k} offers significant tractability and computational efficiency advantages. Consolidating all decisions into a single stage eliminates the need to characterize second-stage cost functions explicitly. Moreover, for widely used risk measures, including expectation and CVaR, problem~\eqref{eq:V-tau-k} can be reformulated as a linear program, which can be solved efficiently using off-the-shelf solvers. With only two possible realizations in the mechanism $M_{\varsigma, \tau}$, the problem's size grows linearly with the uncertainty dimension $\Nd$ (e.g., the number of products), maintaining tractability even for large-scale settings with high-dimensional uncertainty.

As a result, the proposed mechanism $M_{\varsigma, \tau}$ in~\eqref{eq:Mall} leads to a tractable optimization problem~\eqref{eq:V-tau-k} for the operations team. Beyond its tractability, problem~\eqref{eq:V-tau-k} can yield an asymptotical optimal first-stage solution to the two-stage DRO problem~\eqref{eq:2stage-DRO} as stated in Theorem~\ref{thm:opt}. This combination of computational efficiency and theoretical guarantee underscores the benefit of our proposed decentralized approach and mechanism in addressing two-stage DRO challenges.

\subsection{Proof of Theorem~\ref{thm:opt}}\label{sec:proof-thm}

To establish the optimality of $M_{\varsigma, \tau}$, we will show that the ratio between $\mathrm{Obj}(x_{\varsigma, \tau}^{(k)}, b^{(k)},\theta^{(k)})$ and $\mathrm{OPT}(b^{(k)},\theta^{(k)})$ converges to one as $k$ increases. However, direct computation of these two cost functions is NP-hard due to the minimax structure and the random right-hand side in the second-stage problem~\citep{ref:bertsimas2010models}.
To circumvent this computational challenge, we develop a sandwich approach: we construct a lower bound for $\mathrm{OPT}(b^{(k)},\theta^{(k)})$ and an upper bound for $\mathrm{Obj}(x_{\varsigma, \tau}^{(k)},b^{(k)},\theta^{(k)})$. By demonstrating that the ratio of these bounds converges to one as $k$ approaches infinity, we prove the mechanism's optimality without requiring exact calculations of the cost.

\subsubsection{A Lower Bound.}

First, we find a lower bound for $\mathrm{OPT}$ through the expectation mechanism $M_0(\theta^{(k)}) = \delta_{k\mu}$. Let $V_0$ be the value function of the lower-level problem under $M_0$:
 \begin{equation}\label{eq:V-0}
    V_{0}(b^{(k)},\theta^{(k)}) \Let \left\{ 
    \begin{array}{cl}
        \min & c^\top x  - p^\top y \\
        \st & x \in \mathcal X (b^{(k)}),~y\in \R_+^{\Ny}\\
            & A y \le x,~ H y \le k \mu,
    \end{array} 
    \right. 
\end{equation}
which is a linear program, and its optimal solution in the variable $x$ is $x_0^{(k)}$. The next proposition asserts that $V_0$ is a lower bound for $\mathrm{OPT}$.

\begin{proposition}[Lower bound] \label{prop:lower}
Suppose that  Assumptions~~\ref{a:risk-measure},~\ref{a:ambiguity}, and~\ref{a:model-param}\ref{a:model-param-1} are satisfied, and the scaling~\eqref{eq:scaling} is in force. The optimal value $V_{0}(b^{(k)},\theta^{(k)})$ is finite and attainable. Moreover, $V_{0}$ is a lower bound for $\mathrm{OPT}$, that is, $
V_{0}(b^{(k)},\theta^{(k)}) \le \mathrm{OPT}(b^{(k)},\theta^{(k)})$ for all integers $k$. 
\end{proposition}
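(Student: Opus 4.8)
The plan is to prove the two assertions in turn, for every positive integer $k$: first that the linear program $V_0(b^{(k)},\theta^{(k)})$ in~\eqref{eq:V-0} is finite and attained, and then that it lies below $\mathrm{OPT}(b^{(k)},\theta^{(k)})$ in~\eqref{eq:2stage-DRO}.

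\textbf{Finiteness and attainability.} I would read~\eqref{eq:V-0} as one linear program in the joint variable $(x,y)$. It is feasible: $\mathcal X(b^{(k)})=\mathcal X(kb)$ is a nonempty polyhedron (scaling any point of $\mathcal X(b)$ by $k$ stays feasible), so for any $x_0\in\mathcal X(b^{(k)})$ the pair $(x_0,0)$ satisfies $A\cdot 0=0\le x_0$ and $H\cdot 0=0\le k\mu$. For boundedness below, $c\ge 0$ and $x\ge 0$ give $c^\top x\ge 0$, so it suffices to bound $-p^\top y$ from below over the feasible $y$. This is exactly where Assumption~\ref{a:model-param}\ref{a:model-param-1} enters: since every column of $H$ has a strictly positive entry and $H\ge 0$, the polyhedron $\{y\ge 0:Hy\le k\mu\}$ has trivial recession cone and is therefore bounded, so $-p^\top y$ is bounded below on it. A feasible linear program with finite value attains its optimum, which gives finiteness and attainability; eliminating $y$ also yields the identity $V_0(b^{(k)},\theta^{(k)})=\min_{x\in\mathcal X(b^{(k)})}\{c^\top x+g(x,k\mu)\}$, the form I will compare with $\mathrm{OPT}$.

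\textbf{The lower bound.} The crux is that the Dirac measure $\delta_{k\mu}$ is a feasible worst-case candidate, i.e.\ $\delta_{k\mu}\in\mathcal A(\theta^{(k)})$: under the scaling~\eqref{eq:scaling} we have $\theta^{(k)}=(k\mu,k^{s/2}\sigma)$, and $\delta_{k\mu}$ is supported on $\R_+^{\Nd}$ (because $k\mu\ge 0$), has mean $k\mu$, and has second central moments $0\le(k^{s/2}\sigma_i)^2$ for every $i$, so it meets every requirement in~\eqref{eq:ambiguity}. Next, from Assumption~\ref{a:risk-measure}: positive homogeneity forces $\varrho_\PP(0)=0$, then translation invariance forces $\varrho_\PP(t)=t$ for all constants $t$; together with law invariance this gives $\varrho_{\delta_{k\mu}}(g(x,\tilde d))=g(x,k\mu)$ for each $x\in\mathcal X(b^{(k)})$, since $g(x,\tilde d)$ equals the constant $g(x,k\mu)$ $\delta_{k\mu}$-almost surely. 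Hence
\[
c^\top x+\sup_{\PP\in\mathcal A(\theta^{(k)})}\varrho_\PP(g(x,\tilde d))\ \ge\ c^\top x+\varrho_{\delta_{k\mu}}(g(x,\tilde d))\ =\ c^\top x+g(x,k\mu)\qquad\forall\,x\in\mathcal X(b^{(k)}),
\]
and minimizing both sides over $x\in\mathcal X(b^{(k)})$ gives $\mathrm{OPT}(b^{(k)},\theta^{(k)})\ge V_0(b^{(k)},\theta^{(k)})$.

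\textbf{Expected main obstacle.} Most of this is bookkeeping, and the single load-bearing step is the boundedness of~\eqref{eq:V-0}: without Assumption~\ref{a:model-param}\ref{a:model-param-1} a zero column of $H$ would leave a coordinate of $y$ constrained only by $Ay\le x$ (harmless, since $\mathcal X(b^{(k)})$ need not be bounded), so $-p^\top y$ could run to $-\infty$ and $V_0$ would fail to be finite. The other point I would be careful to spell out, rather than assume, is the reduction $\varrho_{\delta_{k\mu}}(g(x,\tilde d))=g(x,k\mu)$ from the risk-measure axioms; once these two points are settled, the inequality and the attainability follow from the definitions and the fundamental theorem of linear programming.
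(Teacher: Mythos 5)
Your proof is correct and follows essentially the same route as the paper: both arguments rest on the observation that the Dirac measure $\delta_{k\mu}$ belongs to $\mathcal A(\theta^{(k)})$ and that the risk of an almost-surely constant loss equals that constant, so that $\mathrm{Obj}(x,b^{(k)},\theta^{(k)})\ge c^\top x+g(x,k\mu)$ for every feasible $x$, whence $\mathrm{OPT}\ge V_0$ (the paper phrases this via the optimal pair $(x\opt,\mathbf y\opt)$ of~\eqref{eq:2stage-DRO} rather than by minimizing the pointwise inequality, but the content is identical). Your explicit verification of finiteness and attainability via the boundedness of $\{y\ge 0: Hy\le k\mu\}$ under Assumption~\ref{a:model-param}\ref{a:model-param-1} is a welcome addition that the paper's displayed proof leaves implicit.
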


\subsubsection{An Upper Bound.}\label{sec:upper-bound}

Next, we construct an upper bound for $\mathrm{Obj}(x_{\varsigma, \tau}^{(k)}, b^{(k)},\theta^{(k)})$ defined in~\eqref{eq:obj}. 
Remind that evaluating $\mathrm{Obj}(x_{\varsigma, \tau}^{(k)}, b^{(k)},\theta^{(k)})$ is computationally intractable as we have no analytic form of the optimal second-stage cost function $g$. Nevertheless, we obtain an upper bound for $\mathrm{Obj}$ by restricting the second-stage recourse function $\mathbf{y}(\cdot)$ to an appropriate family of parametric functions. For a given first-stage decision $x$, a feasible recourse function needs to satisfy 
\be \label{eq:tldr}
\mathbf{y}(d) \ge 0,~A \mathbf{y}(d) \leq x,~H \mathbf{y}(d) \leq d \quad \forall d \ge 0.
\ee
If we reduce $\mathbf{y}(\cdot)$ to constant functions, also known as static decision rules, then the only feasible function that can satisfy $H \mathbf{y}(d) \le d$ and $\mathbf{y}(d) \ge 0$ for all values of $d$ is the zero function, i.e., $\mathbf{y} \equiv 0$. In this case, the first-stage decision $x$ must also be zero, leading to a trivial solution. Alternatively, if we consider the linear decision rule $\mathbf{y}(d) \Let Qd+q$ with $Q\neq 0$, then $\mathbf{y}(d)$ grows linearly with $d$. When $d$ tends to infinity, the constraint $A\mathbf{y}(d) \le x$ will be violated whenever the first-stage decision $x$ is finite. Therefore, the linear decision rule cannot yield a feasible policy.  This observation inspires us to use a truncated linear decision rule (TLDR) set:
\[
\mathcal{Y} \Let \left\{ \mathbf{y}: d \mapsto \min\{ v,Ud\} \big| v\in\R_+^{\Ny},U\in\R_+^{\Ny\times \Nd }\right\} .
\]
The parameter $v$ acts as a threshold vector, and each row of $U$ acts as a slope vector. 
We formalize the feasibility conditions on the parameters $v$ and $U$ in Proposition~\ref{prop:HU}.

\begin{proposition}[Constraints reduction]\label{prop:HU}
    Suppose that $A$ and $H$ are non-negative matrices. For a fixed $x\in\R_+^{\Nx}$, the second-stage policy $\mathbf{y}: d \mapsto \min\{v,Ud\}$ is
    feasible if and only if $(v,U)\in\R_+^{\Ny} \times\R_+^{\Ny\times\Nd}$ satisfies
    \[
    Av \le x ~~~\text{and}~~~ HU \le I.
    \]
\end{proposition}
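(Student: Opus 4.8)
The plan is to prove both directions of the equivalence by unpacking what it means for the policy $\mathbf{y}: d \mapsto \min\{v, Ud\}$ to satisfy the feasibility conditions~\eqref{eq:tldr} for all $d \ge 0$, and then exploiting the non-negativity of $A$ and $H$ to reduce the quantifier over $d$ to a pair of matrix inequalities. Throughout I write $\mathbf{y}(d) = \min\{v, Ud\}$, understood componentwise, and I note immediately that since $v \in \R_+^{\Ny}$ and $U \in \R_+^{\Ny \times \Nd}$ with $d \ge 0$, we automatically have $\mathbf{y}(d) \ge 0$ for all $d \ge 0$; so the non-negativity constraint in~\eqref{eq:tldr} is free, and only $A\mathbf{y}(d) \le x$ and $H\mathbf{y}(d) \le d$ remain to be analyzed.

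For the ``if'' direction, suppose $Av \le x$ and $HU \le I$. Since $0 \le \mathbf{y}(d) \le v$ componentwise and $A$ is non-negative, monotonicity of the linear map $z \mapsto Az$ on the non-negative orthant gives $A\mathbf{y}(d) \le Av \le x$, which is the first constraint. For the second, again using $\mathbf{y}(d) \le Ud$ componentwise and $H$ non-negative, we get $H\mathbf{y}(d) \le H(Ud) = (HU)d \le Id = d$, where the last inequality uses $HU \le I$ together with $d \ge 0$. Hence $\mathbf{y}$ is feasible. For the ``only if'' direction, suppose $\mathbf{y}$ is feasible. Evaluating the first constraint $A\mathbf{y}(d) \le x$ as $d \to \infty$ (more precisely, taking $d$ componentwise large enough that $Ud \ge v$, so that $\mathbf{y}(d) = v$; such $d$ exists since $v$ and $U$ have finitely many entries) yields $Av \le x$. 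For $HU \le I$, I would argue entrywise: fix a row index $j \in [\Ny]$ of $U$ and a column index $i \in [\Nd]$; by Assumption~\ref{a:model-param}\ref{a:model-param-1} each column of $H$ has a single nonzero entry --- wait, that assumption is not available here, so instead I argue directly. Fix $i \in [\Nd]$ and consider $d = t e_i$ for $t > 0$ small, where $e_i$ is the $i$-th standard basis vector; then for $t$ small enough each component satisfies $(Ud)_j = t U_{ji} \le v_j$ (whenever $U_{ji} > 0$; and if $U_{ji} = 0$ we may need $v_j = 0$ handled separately, or simply note $\mathbf{y}(te_i)_j = \min\{v_j, tU_{ji}\}$), so componentwise $\mathbf{y}(te_i) = \min\{v, tU_{\cdot i} t\}$; for $t$ small this equals $t U_{\cdot i}$ in every coordinate $j$ with $v_j > 0$, and equals $0$ in coordinates with $v_j = 0$. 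The constraint $H\mathbf{y}(te_i) \le te_i$ then reads, in each row $m \in [\Nd]$, $\sum_j H_{mj}\mathbf{y}(te_i)_j \le t\,[m = i]$; dividing by $t$ and letting $t \downarrow 0$ gives $\sum_j H_{mj} U_{ji} \le [m=i]$, i.e. $(HU)_{mi} \le I_{mi}$. Since $i$ and $m$ were arbitrary, $HU \le I$.

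The main technical care needed --- and the only place the argument is slightly delicate --- is the handling of coordinates $j$ where $v_j = 0$: there $\mathbf{y}(d)_j \equiv 0$ regardless of $U_{j\cdot}$, so the constraint $H\mathbf{y}(d) \le d$ places no restriction on the $j$-th row of $U$, yet we still claim $HU \le I$. This is consistent because if $v_j = 0$ then $\mathbf{y}(d)_j = 0$ and the corresponding summand $H_{mj}U_{ji}$ never appears in $H\mathbf{y}(d)$; however $HU$ as written does include it. To reconcile this, I would observe that the statement is really about feasibility of the policy, so the ``only if'' direction should be read as: \emph{there exists a representation} with $HU \le I$, or equivalently one may assume without loss of generality that rows of $U$ corresponding to $v_j = 0$ are themselves zero (replacing $U_{j\cdot}$ by $0$ changes neither $\mathbf{y}$ nor feasibility). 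Modulo this normalization, which I would state explicitly at the start of the proof, the entrywise limiting argument above goes through cleanly. I expect the bulk of the writeup to be the ``only if'' direction with this normalization remark; the ``if'' direction is a two-line monotonicity computation.
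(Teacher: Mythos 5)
Your proposal is correct and takes essentially the same route as the paper: the ``if'' direction is the same two-line monotonicity computation, and the ``only if'' direction uses the same two limiting choices of $d$ (componentwise large $d$ to force $\mathbf{y}(d)=v$ and hence $Av\le x$; scaled basis vectors $d = t e_i$ with $t\downarrow 0$ to extract $HU\le I$ column by column). The degeneracy you flag for coordinates with $v_j=0$ is handled in the paper by exactly the normalization you propose --- setting the corresponding row of $U$ to zero without loss of generality --- so your remark is not a gap but a faithful reconstruction of the paper's own caveat.
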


Based on Proposition~\ref{prop:HU}, we find a feasible second-stage policy under the first-stage decision $x_{\varsigma, \tau}^{(k)}$ by solving
\begin{subequations}\label{eq:V-bar-tldr}
 \begin{equation}\label{eq:V-bar}
 \bar V_{\varsigma, \tau} (b^{(k)},\theta^{(k)}) \Let \left\{
    \begin{array}{cl}
    \min & c^\top x_{\varsigma, \tau}^{(k)} + \varrho_{M_{\varsigma, \tau}(\theta^{(k)})} (- p^\top \min\{ v ,U \tilde d\} )\\
        \st & v \in \R_+^{\Ny},~ U\in\R_+^{\Ny\times \Nd} \\
            &  A v \leq x_{\varsigma, \tau}^{(k)},~ H U\leq I,  
    \end{array} 
    \right. 
\end{equation}
where the constant $c^\top x_{\varsigma, \tau}^{(k)}$ in the objective does not impact its optimal solution set. Let $(v_{\varsigma, \tau}^{(k)},U_{\varsigma, \tau}^{(k)})$ denote the optimal solution in~\eqref{eq:V-bar}, we define the TLDR
\be\label{eq:policy}
\mathbf{y}_{\varsigma, \tau}^{(k)}(d) \Let \min\{v_{\varsigma, \tau}^{(k)},U_{\varsigma, \tau}^{(k)} d \}.
\ee
\end{subequations}
The cost generated by the solution $(x_{\varsigma, \tau}^{(k)}, \mathbf{y}_{\varsigma, \tau}^{(k)})$ is denoted by:
\[
C(x_{\varsigma, \tau}^{(k)},\mathbf{y}_{\varsigma, \tau}^{(k)},b^{(k)},\theta^{(k)}) = c^\top x_{\varsigma, \tau}^{(k)} + \sup\limits_{\PP\in\mathcal{A}(\theta^{(k)}) } \varrho_{\PP}(-p^\top \mathbf{y}_{\varsigma, \tau}^{(k)}(\tilde d)) \ge \mathrm{Obj}(x_{\varsigma, \tau}^{(k)},b^{(k)},\theta^{(k)}),
\]
which serves as an upper bound for $\mathrm{Obj}(x_{\varsigma, \tau}^{(k)},b^{(k)},\theta^{(k)})$ due to 
the possible suboptimality of the second-stage TLDR $\mathbf{y}_{\varsigma, \tau}^{(k)}$, i.e., $-p^\top \mathbf{y}_{\varsigma, \tau}^{(k)}(d) \ge g(x_{\varsigma, \tau}^{(k)},d)$ for all $d\ge 0$. Consequently, we can  bound the difference $\mathrm{Obj}(x_{\varsigma, \tau}^{(k)},b^{(k)},\theta^{(k)}) - \mathrm{OPT}(b^{(k)},\theta^{(k)})$ from above by $C(x_{\varsigma, \tau}^{(k)},\mathbf{y}_{\varsigma, \tau}^{(k)},b^{(k)},\theta^{(k)}) - V_{0}(b^{(k)},\theta^{(k)}) $. We further separate this upper bound into two parts as
\be\label{eq:separation}
\begin{aligned}
     & C(x_{\varsigma, \tau}^{(k)},\mathbf{y}_{\varsigma, \tau}^{(k)},b^{(k)},\theta^{(k)}) - V_{0}(b^{(k)},\theta^{(k)})  \\
    = & \underbrace{C(x_{\varsigma, \tau}^{(k)},\mathbf{y}_{\varsigma, \tau}^{(k)},b^{(k)},\theta^{(k)}) - \bar V_{\varsigma, \tau}(b^{(k)},\theta^{(k)})}_{\mathrm{(I)}} + \underbrace{\bar V_{\varsigma, \tau}(b^{(k)},\theta^{(k)}) -  V_{0}(b^{(k)},\theta^{(k)})}_{\mathrm{(II)}}.
\end{aligned}
\ee
The next two results provide the upper bounds for $\mathrm{(I)}$ and $\mathrm{(II)}$, respectively.
\begin{proposition}[Loss upper bound (I)] \label{prop:loss-upper-bound}
    Suppose that Assumptions~\ref{a:risk-measure} and~\ref{a:ambiguity} are satisfied, and that the scaling~\eqref{eq:scaling} is in force. For the mechanism $M_{\varsigma,\tau}$ in~\eqref{eq:Mall} , the following hold:
    \begin{enumerate}\renewcommand{\labelenumi}{(\roman{enumi})}
        \item Under the expectation mechanism $M_0$,
         \[
C(x_{0}^{(k)},\mathbf{y}_{0}^{(k)},b^{(k)},\theta^{(k)}) - \bar V_{0}(b^{(k)}, \theta^{(k)}) \le (\frac{1}{2}+\alpha ) k^{\frac{s}{2 }} p^\top U_{0}^{(k)} \sigma.
    \] 
    \item Under the mechanism $M_{\varsigma,\tau}$ with $(\varsigma, \tau) \in ( 0,\sigma] \times (0,\tau_{\max}]$, 
    \[
C(x_{\varsigma, \tau}^{(k)},\mathbf{y}_{\varsigma, \tau}^{(k)},b^{(k)},\theta^{(k)}) - \bar V_{\varsigma, \tau}(b^{(k)},\theta^{(k)}) \le  (\frac{1}{2}+\alpha) k^{\frac{s}{2 }} p^\top U_{\varsigma, \tau}^{(k)} \sigma +  \sqrt{\frac{1-\tau}{\tau} }k^{\frac{s}{2 }} p^\top U_{\varsigma, \tau}^{(k)} \varsigma.
    \] 
    \end{enumerate}
\end{proposition}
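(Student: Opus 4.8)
\emph{Proof plan.} In both parts the first-stage cost $c^\top x$ is identical on the two sides, so, writing $(v,U)$ for the policy parameters entering $\bar V$ (equal to $(v_0^{(k)},U_0^{(k)})$ in (i) and to $(v_{\varsigma,\tau}^{(k)},U_{\varsigma,\tau}^{(k)})$ in (ii)) and introducing the scalar random variable $\xi(\tilde d)\Let p^\top\min\{v,U\tilde d\}$, the quantity to be bounded is in each case
\[
\sup_{\PP\in\mathcal A(\theta^{(k)})}\varrho_\PP\!\big(-\xi(\tilde d)\big)\;-\;\varrho_{M(\theta^{(k)})}\!\big(-\xi(\tilde d)\big),\qquad M\in\{M_0,\,M_{\varsigma,\tau}\}.
\]
Since $0\le\xi(\tilde d)\le p^\top v$ is bounded and continuous in $\tilde d$, the loss $-\xi(\tilde d)$ is a bounded measurable function with finite variance under every $\PP\in\mathcal M_2$, so the risk functionals are well defined. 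The plan is to bound the two terms separately: a $\PP$-uniform upper bound on the supremum (carried out once, since it is the same for both parts), and a lower bound on $\varrho_{M(\theta^{(k)})}(-\xi(\tilde d))$ (done once for the Dirac mechanism and once for the two-point mechanism).

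The upper bound rests on the auxiliary fact that, under Assumption~\ref{a:risk-measure}, the family satisfies $\varrho_\PP(\tilde Z)\le\E_\PP[\tilde Z]+\alpha\sqrt{\mathrm{Var}_\PP(\tilde Z)}$ for every $\PP\in\mathcal M_2$ and every $\tilde Z\in\mathcal L_0$ of finite variance; I would prove this by applying positive homogeneity and translation invariance to the standardized variable $(\tilde Z-\E_\PP[\tilde Z])/\sqrt{\mathrm{Var}_\PP(\tilde Z)}$, whose risk is at most $\alpha$ by Definition~\ref{def:standard:risk:coefficient} (the zero-variance case being immediate from translation invariance), with $\alpha\ge0$ used so that the $\sqrt{\mathrm{Var}}$-term may be enlarged. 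Applying this with $\tilde Z=-\xi(\tilde d)$ reduces the supremum to two scalar estimates valid for all $\PP\in\mathcal A(\theta^{(k)})$: a lower bound on $\E_\PP[\xi(\tilde d)]$ and an upper bound on $\mathrm{Var}_\PP(\xi(\tilde d))$.

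For the mean I would use $\min\{v_j,(Ud)_j\}=(Ud)_j-\big((Ud)_j-v_j\big)_+$, the mean constraint $\E_\PP[\tilde d]=k\mu$, the inequality $(a)_+-(b)_+\le(a-b)_+$, and $U\ge0$, to get $\xi(k\mu)-\E_\PP[\xi(\tilde d)]\le\sum_i(p^\top U)_i\,\E_\PP[(\tilde d_i-k\mu_i)_+]$; since $\tilde d_i-k\mu_i$ has zero $\PP$-mean, $\E_\PP[(\tilde d_i-k\mu_i)_+]=\tfrac12\E_\PP[|\tilde d_i-k\mu_i|]\le\tfrac12\sqrt{\mathrm{Var}_\PP(\tilde d_i)}\le\tfrac12 k^{\frac s2}\sigma_i$ by the variance bound of $\mathcal A(\theta^{(k)})$, hence $\E_\PP[\xi(\tilde d)]\ge\xi(k\mu)-\tfrac12 k^{\frac s2}p^\top U\sigma$. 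For the variance, $\mathrm{Var}_\PP(\xi(\tilde d))\le\E_\PP[(\xi(\tilde d)-\xi(k\mu))^2]$, and since $t\mapsto\min\{a,t\}$ is $1$-Lipschitz and $U\ge0$ one has $|\xi(\tilde d)-\xi(k\mu)|\le\sum_i(p^\top U)_i|\tilde d_i-k\mu_i|$, so the $L^2$ triangle inequality and $\mathrm{Var}_\PP(\tilde d_i)\le k^{s}\sigma_i^2$ give $\mathrm{Var}_\PP(\xi(\tilde d))\le(k^{\frac s2}p^\top U\sigma)^2$; together these yield $\sup_{\PP\in\mathcal A(\theta^{(k)})}\varrho_\PP(-\xi(\tilde d))\le-\xi(k\mu)+(\tfrac12+\alpha)k^{\frac s2}p^\top U\sigma$. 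For the second term, under $M_0(\theta^{(k)})=\delta_{k\mu}$ translation invariance gives $\varrho_{M_0(\theta^{(k)})}(-\xi(\tilde d))=-\xi(k\mu)$, and subtracting yields part (i) with $U=U_0^{(k)}$. Under $M_{\varsigma,\tau}(\theta^{(k)})=(1-\tau)\delta_{d_l}+\tau\delta_{d_h}$ with $d_l\le k\mu\le d_h$, the map $d\mapsto-\xi(d)$ is nonincreasing, so $-\xi(\tilde d)\ge-\xi(d_h)$ $M_{\varsigma,\tau}(\theta^{(k)})$-a.s.\ and monotonicity of $\varrho$ gives $\varrho_{M_{\varsigma,\tau}(\theta^{(k)})}(-\xi(\tilde d))\ge-\xi(d_h)$; finally $\xi(d_h)-\xi(k\mu)=p^\top(\min\{v,Ud_h\}-\min\{v,Uk\mu\})\le p^\top U(d_h-k\mu)=\sqrt{(1-\tau)/\tau}\,k^{\frac s2}p^\top U\varsigma$, and subtracting the resulting lower bound from the supremum bound produces part (ii) with $U=U_{\varsigma,\tau}^{(k)}$.

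The $(\cdot)_+$-algebra, the Cauchy--Schwarz/$L^2$ steps, and the derivation of the auxiliary risk inequality are routine. The one genuinely delicate point — and the reason the constant is exactly $\tfrac12+\alpha$ — is that $\varrho$ is not assumed convex or subadditive, so one cannot distribute $\varrho$ over the sum $\sum_i(p^\top U)_i|\tilde d_i-k\mu_i|$; the entire estimate must be pushed through the single scalar $\xi(\tilde d)$ using only translation invariance, positive homogeneity, monotonicity, and the standard risk coefficient, with the factor $\tfrac12$ emerging from the one-sided moment identity $\E_\PP[(\cdot)_+]=\tfrac12\E_\PP[|\cdot|]$ at a zero-mean random variable.
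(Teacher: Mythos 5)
Your proof is correct, and it is built from the same three ingredients that drive the paper's argument: the bound $\varrho_\PP(\tilde Z)\le\E_\PP[\tilde Z]+\alpha\sqrt{\mathrm{Var}_\PP(\tilde Z)}$ obtained from the standard risk coefficient via translation invariance and positive homogeneity, a one-sided first-moment estimate that produces the factor $\tfrac12$, and a Lipschitz/Minkowski bound on the standard deviation of the truncated payoff. The packaging, however, differs in one respect worth recording. The paper writes $-p^\top\min\{v,U\tilde d\}=-p^\top v+p^\top\max\{0,v-U\tilde d\}$, lower-bounds the mechanism term by $-p^\top v$, and controls the worst-case risk of the shortfall through Lemma~\ref{lemma:risk-upper} (built on the Gallego--Moon inequality applied to $v-U\tilde d$, whose mean is not zero); this leaves a residual term $\sum_i p_i(v-kU\mu)_i^+$ that can only be eliminated by invoking the optimality of $(v_{\varsigma,\tau}^{(k)},U_{\varsigma,\tau}^{(k)})$ for problem~\eqref{eq:V-bar}, which forces $v\le Ud_h$. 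You instead center everything at $\xi(k\mu)$, apply the zero-mean identity $\E_\PP[(\tilde d_i-k\mu_i)_+]=\tfrac12\E_\PP[|\tilde d_i-k\mu_i|]$ directly, and lower-bound the mechanism term by $-\xi(d_h)$, so the correction $\xi(d_h)-\xi(k\mu)\le p^\top U(d_h-k\mu)$ follows from the $1$-Lipschitz monotonicity of $t\mapsto\min\{a,t\}$ alone and holds for \emph{any} feasible $(v,U)$, not just the optimizer. The resulting constants are identical, and your route is marginally more self-contained since it dispenses with the structural property of the minimizer; your observation that the non-convexity of $\varrho$ forces the whole estimate through the single scalar $\xi(\tilde d)$ is exactly the point the paper's lemma is designed around.
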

\begin{proposition}[Value upper bound (II)]\label{prop:value-upper-bound}
   Suppose that Assumptions~\ref{a:risk-measure},~\ref{a:ambiguity} and~\ref{a:model-param}\ref{a:model-param-1} are satisfied, and the scaling~\eqref{eq:scaling} is in force. For the mechanism $M_{\varsigma,\tau}$ in~\eqref{eq:Mall}, the following hold:
    \begin{enumerate}\renewcommand{\labelenumi}{(\roman{enumi})}
    \item Under the expectation mechanism $M_0$, we have $
    \bar V_{0}(b^{(k)},\theta^{(k)}) = V_{0} (b^{(k)},\theta^{(k)})$.
    \item Under the mechanism $M_{\varsigma,\tau}$ with $(\varsigma, \tau) \in ( 0,\sigma] \times (0,\tau_{\max}]$,
    \[
     \bar V_{\varsigma, \tau}(b^{(k)},\theta^{(k)}) - V_{0} (b^{(k)},\theta^{(k)}) \le (\frac{1}{2}+\alpha )k^{\frac{s}{2}} p^\top U_{0}^{(k)} \sigma + \Big( \sqrt{\frac{1-\tau}{\tau}} + \sqrt{\frac{\tau}{1-\tau}} \Big) k^{\frac{s}{2}} p^\top \bar U_{\varsigma, \tau}^{(k)} \varsigma ,
    \]
     where $\bar U_{\varsigma, \tau} ^{(k)}\in \R_+^{\Ny\times\Nd}$ is a bounded matrix that satisfies $H\bar U_{\varsigma, \tau}^{(k)} \le I$. 
     \end{enumerate}
\end{proposition}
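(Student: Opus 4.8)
The plan is to prove the two parts in turn, reusing Proposition~\ref{prop:HU} (the feasibility characterization $Av\le x$, $HU\le I$ of truncated linear decision rules), the interchangeability identity~\eqref{eq:interchange-varrho}, Proposition~\ref{prop:loss-upper-bound}(i), and the containment $M_{\varsigma,\tau}(\theta^{(k)})\in\mathcal A(\theta^{(k)})$ (which I would verify en route by a marginal moment computation, as the footnote in the text anticipates). Assumption~\ref{a:model-param}\ref{a:model-param-1} --- each column of $H$ has a single nonzero entry, say in row $i(j)$ for column $j$ --- enters both parts decisively, since it makes $HU\le I$ decouple into the per-row budgets $\sum_{j:i(j)=i}H_{ij}U_{j,i}\le1$ with all other entries of $U$ forced to zero.

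For part~(i) I would establish $\bar V_0=V_0$ by two inequalities. The direction $\bar V_0\ge V_0$ is immediate: under $M_0(\theta^{(k)})=\delta_{k\mu}$ the risk functional in~\eqref{eq:V-bar} collapses to evaluation at $k\mu$, and by Proposition~\ref{prop:HU} any feasible $(v,U)$ makes $\min\{v,Uk\mu\}$ a recourse feasible for $g(x_0^{(k)},k\mu)$, so $-p^\top\min\{v,Uk\mu\}\ge g(x_0^{(k)},k\mu)$ and $\bar V_0\ge c^\top x_0^{(k)}+g(x_0^{(k)},k\mu)=V_0$. For the reverse, letting $\hat y$ be an optimal second-stage vector in the LP~\eqref{eq:V-0} (finite and attainable by Proposition~\ref{prop:lower}), I would take $v=\hat y$ and $U_{j,i(j)}=\hat y_j/(k\mu_{i(j)})$ (all other entries zero; $U_{j,i(j)}:=0$ when $\mu_{i(j)}=0$, consistently, since feasibility then forces $\hat y_j=0$). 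One checks $Av=A\hat y\le x_0^{(k)}$, $(HU)_{ii}=(H\hat y)_i/(k\mu_i)\le1$ so $HU\le I$, and $\min\{v,Uk\mu\}=\hat y$; this feasible point attains $c^\top x_0^{(k)}-p^\top\hat y=V_0$, giving $\bar V_0\le V_0$.

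For part~(ii) I would first produce the $(\tfrac12+\alpha)k^{s/2}p^\top U_0^{(k)}\sigma$ term. By~\eqref{eq:interchange-varrho} and monotonicity of $\varrho$, problem~\eqref{eq:V-tau-k} equals $\min_{x\in\mathcal X(b^{(k)})}\{c^\top x+\varrho_{M_{\varsigma,\tau}(\theta^{(k)})}(g(x,\tilde d))\}$, attained at $x_{\varsigma,\tau}^{(k)}$; plugging in the feasible competitor $x_0^{(k)}$, using that $\mathbf y_0^{(k)}$ is a feasible recourse for $x_0^{(k)}$ (so $g(x_0^{(k)},\cdot)\le-p^\top\mathbf y_0^{(k)}(\cdot)$ pointwise) together with monotonicity, and using $M_{\varsigma,\tau}(\theta^{(k)})\in\mathcal A(\theta^{(k)})$, I get
\[
V_{\varsigma,\tau}(b^{(k)},\theta^{(k)})\le c^\top x_0^{(k)}+\varrho_{M_{\varsigma,\tau}(\theta^{(k)})}\big(-p^\top\mathbf y_0^{(k)}(\tilde d)\big)\le C\big(x_0^{(k)},\mathbf y_0^{(k)},b^{(k)},\theta^{(k)}\big).
\]
Proposition~\ref{prop:loss-upper-bound}(i) together with part~(i) then gives $V_{\varsigma,\tau}(b^{(k)},\theta^{(k)})\le V_0(b^{(k)},\theta^{(k)})+(\tfrac12+\alpha)k^{s/2}p^\top U_0^{(k)}\sigma$, so it remains only to bound the truncation gap $\bar V_{\varsigma,\tau}-V_{\varsigma,\tau}$ by the second term.

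The truncation gap is the crux. I would pick optimal recourses $\mathbf y_l^\star,\mathbf y_h^\star$ attaining $g(x_{\varsigma,\tau}^{(k)},d_l)$ and $g(x_{\varsigma,\tau}^{(k)},d_h)$, and use in~\eqref{eq:V-bar} the rule with threshold $v=\mathbf y_h^\star$ and slope matrix $\bar U_{\varsigma,\tau}^{(k)}$ defined by $(\bar U_{\varsigma,\tau}^{(k)})_{j,i(j)}=\mathbf y_{h,j}^\star/(d_h)_{i(j)}$ (nonzero only in column $i(j)$; note $(d_h)_{i(j)}>0$ because $\varsigma>0$). From Assumption~\ref{a:model-param}\ref{a:model-param-1} and $H\mathbf y_h^\star\le d_h$ one gets $Av=A\mathbf y_h^\star\le x_{\varsigma,\tau}^{(k)}$, $(H\bar U_{\varsigma,\tau}^{(k)})_{ii}=(H\mathbf y_h^\star)_i/(d_h)_i\le1$ so $H\bar U_{\varsigma,\tau}^{(k)}\le I$, and $\mathbf y_{h,j}^\star\le(d_h)_{i(j)}/H_{i(j),j}$, hence $(\bar U_{\varsigma,\tau}^{(k)})_{j,i(j)}\le1/H_{i(j),j}$; thus $\bar U_{\varsigma,\tau}^{(k)}$ is admissible for Proposition~\ref{prop:HU} and uniformly bounded in $k$. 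Since $0\le d_l\le d_h$ (the left inequality using $\tau\le\tau_{\max}$) and $\bar U_{\varsigma,\tau}^{(k)}d_h=\mathbf y_h^\star$, the induced rule equals $\mathbf y_h^\star$ at $d_h$ (no loss) and $\bar U_{\varsigma,\tau}^{(k)}d_l\le\mathbf y_h^\star$ at $d_l$, a vector feasible for $g(x_{\varsigma,\tau}^{(k)},d_l)$, so $\varepsilon:=p^\top\mathbf y_l^\star-p^\top\bar U_{\varsigma,\tau}^{(k)}d_l\ge0$; and since $g(x_{\varsigma,\tau}^{(k)},\cdot)$ is nonincreasing, $p^\top\mathbf y_l^\star\le p^\top\mathbf y_h^\star$, whence
\[
\varepsilon\le p^\top\big(\mathbf y_h^\star-\bar U_{\varsigma,\tau}^{(k)}d_l\big)=p^\top\bar U_{\varsigma,\tau}^{(k)}(d_h-d_l)=\Big(\sqrt{\tfrac{1-\tau}{\tau}}+\sqrt{\tfrac{\tau}{1-\tau}}\Big)k^{s/2}p^\top\bar U_{\varsigma,\tau}^{(k)}\varsigma.
\]
The loss random variable of this rule is at most $g(x_{\varsigma,\tau}^{(k)},\tilde d)+\varepsilon$ everywhere, so by monotonicity and translation invariance its risk is at most $\varrho_{M_{\varsigma,\tau}(\theta^{(k)})}(g(x_{\varsigma,\tau}^{(k)},\tilde d))+\varepsilon$; adding $c^\top x_{\varsigma,\tau}^{(k)}$ yields $\bar V_{\varsigma,\tau}(b^{(k)},\theta^{(k)})\le V_{\varsigma,\tau}(b^{(k)},\theta^{(k)})+\varepsilon$, which combines with the previous step to close the proof. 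The main obstacle is exactly this construction: $x_{\varsigma,\tau}^{(k)}$ is frozen in $\bar V_{\varsigma,\tau}$ and the optimal recourse need not be monotone in demand, so a truncated linear rule cannot in general match both atoms; anchoring the threshold at the high-demand recourse $\mathbf y_h^\star$ and rescaling it linearly toward the origin preserves feasibility at both atoms and, through Assumption~\ref{a:model-param}\ref{a:model-param-1}, produces a slope matrix that is at once dominated by $I$ under $H$ and bounded uniformly in $k$ --- which is precisely where that assumption is used.
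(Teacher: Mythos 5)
Your proposal is correct and follows essentially the same route as the paper's proof: part (i) uses the identical rescaled-recourse construction $U_{j,i(j)}=\hat y_j/(k\mu_{i(j)})$ enabled by Assumption~\ref{a:model-param}\ref{a:model-param-1}, and part (ii) uses the same two-term split, bounding the first term by evaluating the $M_0$-optimal pair under $M_{\varsigma,\tau}\in\mathcal A(\theta^{(k)})$ together with Proposition~\ref{prop:loss-upper-bound}(i), and the truncation gap by anchoring the threshold at the high-demand recourse and rescaling toward the origin. The only (harmless) cosmetic differences are that you bypass the paper's intermediate quantity $\hat V_{\varsigma,\tau}$ by comparing $V_{\varsigma,\tau}$ directly against the competitor $x_0^{(k)}$, and you bound the truncation gap via the pointwise loss $\varepsilon$ rather than via the essential supremum/infimum of the two-atom loss, arriving at the same final expressions.
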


The last technical prerequisite of the proof is the following lemma, which asserts that the value function $V_{0}$ defined in~\eqref{eq:V-0} is positive homogeneous under the scaling scheme~\eqref{eq:scaling}.

\begin{lemma}[Positive homogeneity] \label{lemma:risk-pos-homo}
Under the scaling scheme~\eqref{eq:scaling}, the value function $V_{0}$ is positively homogeneous of degree 1 in the sense that $V_{0}(b^{(k)},\theta^{(k)})=kV_{0}(b^{(1)},\theta^{(1)})$ for any $k>0$.
\end{lemma}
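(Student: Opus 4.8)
The plan is to show directly that scaling the index from $1$ to $k$ transforms the linear program \eqref{eq:V-0} for $V_0(b^{(1)},\theta^{(1)})$ into the linear program for $V_0(b^{(k)},\theta^{(k)})$ via the change of variables $(x,y)\mapsto(kx,ky)$, and that this bijection scales the objective by exactly $k$. First I would write out the feasible region of \eqref{eq:V-0} at level $1$: it consists of pairs $(x,y)\in\R_+^{\Nx}\times\R_+^{\Ny}$ with $Gx\le b$, $Ay\le x$, and $Hy\le\mu$. Under the scaling \eqref{eq:scaling} we have $b^{(k)}=kb$ and the mean component of $\theta^{(k)}$ is $k\mu$ (the standard-deviation component $k^{s/2}\sigma$ does not enter \eqref{eq:V-0} at all, since $M_0$ is the Dirac mass at the scaled mean). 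So the feasible region at level $k$ is $\{(x',y')\ge 0 : Gx'\le kb,\ Ay'\le x',\ Hy'\le k\mu\}$.

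The key step is the observation that $(x,y)$ is feasible for the level-$1$ problem if and only if $(kx,ky)$ is feasible for the level-$k$ problem: each of the three constraint systems $Gx\le b$, $Ay\le x$, $Hy\le\mu$ is positively homogeneous, so multiplying both sides by $k>0$ preserves feasibility, and conversely dividing by $k$ recovers a level-$1$ feasible point. This gives a bijection between the two feasible sets. Since the objective $c^\top x-p^\top y$ is linear, it satisfies $c^\top(kx)-p^\top(ky)=k(c^\top x-p^\top y)$, so the bijection multiplies every attained objective value by $k$. Taking the infimum over each feasible set therefore yields $V_0(b^{(k)},\theta^{(k)})=k\,V_0(b^{(1)},\theta^{(1)})$. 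Finiteness and attainability of the level-$1$ value (hence of the level-$k$ value) are guaranteed by Proposition \ref{prop:lower}, so the infima are genuine minima and the identity is between finite numbers.

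There is essentially no serious obstacle here; the only point requiring a sentence of care is that the argument as stated is for integer $k$ arising from the scaling scheme, whereas the lemma claims the identity for any real $k>0$. This is harmless: the change of variables $(x,y)\mapsto(kx,ky)$ and the homogeneity of the constraints are valid for any scalar $k>0$, so the same bijection-plus-linear-objective argument goes through verbatim once one reads $b^{(k)}=kb$ and the scaled mean as $k\mu$ for real $k$. I would also remark that $V_0$ inherits finiteness for all such $k$ from the $k=1$ case through the same scaling, so no separate well-posedness check is needed beyond Proposition \ref{prop:lower}.
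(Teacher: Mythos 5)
Your proposal is correct and is essentially the paper's own argument: the paper establishes the identity via the two suboptimality inequalities obtained by mapping a level-$1$ minimizer to $(kx,ky)$ and a level-$k$ minimizer to $(x/k,y/k)$, which is exactly your bijection-of-feasible-sets argument packaged as two one-sided bounds. Your additional remarks on attainability and on real $k>0$ are harmless refinements of the same proof.
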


The complete proof of Theorem~\ref{thm:opt}, which builds on the above results, appears in Appendix~\ref{appendix:moment} to maintain the flow of the presentation.

\section{Optimal Mechanisms for Data-driven Wasserstein Ambiguity Sets} \label{sec:wass}
This section focuses on the data-driven Wasserstein DRO setting. In practice, the forecasting team can access a finite set of $N$ samples, e.g., the historical realization of uncertain demand $\tilde d$, and devise a nominal distribution $\hat\PP$ based on these samples. The forecasting team constructs the ambiguity set as the Wasserstein metric neighborhood around $\hat\PP$. We use the $2$-Wasserstein metric $\Wass$, which computes the distance between two distributions $\PP$ and $\PP'$ by
\[
\Wass(\PP,\PP') \Let \min \limits_{\pi\in\Pi(\PP,\PP')} \big (\mathbb{E}_{\pi}[ \lVert \tilde d - \tilde d' \rVert^2] \big)^{\frac{1}{2}},
\]
where $\Pi(\PP,\PP')$ denotes the set of all joint contributions of random vectors $\tilde d$ and $\tilde d'$ with marginal distributions $\PP$ and $\PP'$, respectively. The Wasserstein distance represents the cost of an optimal mass transportation plan for moving a mass distribution described by $\PP$ to another described by $\PP'$~\citep{ref:villani2003topics}.
We formalize the definition of our $2$-Wasserstein ambiguity set in Assumption~\ref{a:ambiguity-wass}.

\begin{assumption}[Wasserstein ambiguity set]\label{a:ambiguity-wass}
Denote by $\theta = (\hat\PP,\varepsilon)$, where $\hat \PP$ is the nominal distribution, and $\varepsilon$ is the radius. We define the $2$-Wasserstein ambiguity set $\mathcal{A}(\theta)$ as
\be \label{eq:wass-set}
\mathcal A (\theta) \Let \left\{\PP \in \mathcal M_2: \Wass(\PP, \hat \PP) \le \varepsilon ,~ \PP(\tilde d \in\R_+^{\Nd}) =1\right\},
\ee
which contains all the probability distributions with non-negative support within the $2$-Wasserstein ball of radius $\varepsilon$ centered at $\hat \PP$.   
\end{assumption}

We first introduce a scaling scheme for the budget $b$ and the data-driven ambiguity information $\theta = (\hat\PP,\varepsilon)$ as follows. Fix a base $b^{(1)} = b \in\R_+^K$ and $\theta^{(1)} =( \hat\PP^{(1)}, \varepsilon^{(1)} )$. Suppose that $\hat\PP^{(1)}$ has mean $\hat\mu$ and covariance matrix $\hat\Sigma$. For the $k$-th problem, the budget parameter $b^{(k)}$ scales linearly to $kb$ and the ambiguity radius $\varepsilon^{ (k) }$ scales \textit{sub}linearly to $ k^{\frac{s}{2}} \varepsilon $ for some $s\in [1,2)$. Further, the nominal distribution $\hat\PP^{(k)}$ has mean $k\hat\mu$ and covariance matrix $\hat\Sigma^{(k)} \preceq k^s \hat\Sigma$.  Overall, the scaling scheme is:
\begin{subequations}\label{eq:wass-scaling}
\begin{equation}
    b^{(k)} = k b \quad \text{ and } \quad  \theta^{(k)} = (\hat\PP^{(k)} , \varepsilon^{(k)} ) ,
\end{equation}
where the moments $(\hat\mu^{(k)}, \hat\Sigma^{(k)})$ of $\hat\PP^{(k)}$ and the radius $ \varepsilon^{(k)}$ satisfy
\begin{equation}
     \hat\mu^{(k)}= k \hat \mu,~ \hat\Sigma^{(k)} \preceq k^s \hat \Sigma, ~ \varepsilon^{(k)} = k^{\frac{s}{2}}\varepsilon ~~\text{for an}~~ s\in [1,2).
\end{equation}
\end{subequations}

The condition that $\hat\Sigma^{(k)} \preceq k^s \hat \Sigma$ ensures that $\hat\Sigma_{ii}^{(k)} \le k^{s}\hat\Sigma_{ii}$, that is, the marginal variance scales sub-quadratically with $k$ as $s\in[1,2)$. The linear growth rate of mean $\hat\mu^{(k)}$ and sub-quadratic growth rate of marginal variance $\hat\Sigma_{ii}^{(k)}$ for all $i\in[\Nd]$ are, again, consistent with Taylor's law~\citep{ref:taylor1961aggregation}. The ambiguity radius $\varepsilon^{(k)}$ scales sub-linearly with $k$, ensuring that the ambiguity radius does not escalate as quickly as the budget and average demand. This, in turn, prevents overly conservative decisions in larger budget and market scenarios.

\subsection{An Optimal Mechanism} \label{sec:wass-optimal}
Consider a fixed base ambiguity parameter $\theta^{(1)} = (\hat\PP,\varepsilon)$, where $\hat\PP$ has mean vector $\hat\mu$ and covariance matrix $\hat\Sigma$. We introduce a vector $\hat\sigma \in\R_+^{\Nd}$ with $\hat\sigma_i \Let \sqrt{2\mathrm{Tr}(\hat\Sigma) + 2\varepsilon^2}$ for all $ i\in[\Nd]$. Under the scaling scheme~\eqref{eq:wass-scaling}, we consider the mechanism $M_{\varsigma, \tau}$ parameterized by two parameters $(\varsigma, \tau) \in \R_+^{\Nd} \times \R_+$:
\begin{subequations}\label{eq:Mall-wass}
\be \label{eq:Mtau-wass}
    M_{\varsigma, \tau}(\theta^{(k)}) \Let (1-\tau) \delta_{d_l} + \tau \delta_{d_h},
\ee
where the locations $d_l$ and $d_h$ are defined as
\be
d_l \Let k\hat\mu -  \sqrt{\tau/ (1-\tau) } k^{\frac{s}{2}}\varsigma \text{ and }d_h \Let k\hat\mu + \sqrt{(1-\tau)/\tau} k^{\frac{s}{2}} \varsigma .
\ee
Further, we restrict the parameters $(\varsigma, \tau)$ to the range
\begin{equation} \label{eq:param-range-wass}
    (\varsigma, \tau) \in [0,\hat\sigma] \times [0, \hat\tau_{\max}], \quad \text{ where } \hat\tau_{\max} \Let \frac{ \hat\gamma^2}{1+ \hat\gamma^2} \text{ and } \hat\gamma \triangleq \min_{i\in[\Nd]}  \frac{\hat\mu_{i}}{ \varsigma_{i} },
\end{equation}
\end{subequations}
and we emphasize that the above range depends only on the base vector $(\hat\PP,\varepsilon)$ but not on the scaling factor $k$. Similarly as before, we denote by $M_0(\theta^{(k)}) = \delta_{k\hat\mu}$ the expectation mechanism.

The following result asserts that the mechanism $M_{\varsigma, \tau}$ defined in~\eqref{eq:Mall-wass} is an optimal mechanism for the data-driven Wasserstein DRO setting.

\begin{theorem}[Optimal Mechanism - Wasserstein DRO] \label{thm:W-opt}
 Suppose that Assumptions~\ref{a:risk-measure},~\ref{a:model-param}, and~\ref{a:ambiguity-wass} are satisfied, that $\varrho_{\PP}(\cdot) \ge \E_{\PP}[\cdot]$, and that the scaling~\eqref{eq:wass-scaling} is in force. The mechanism $M_{\varsigma, \tau}$ defined in~\eqref{eq:Mall-wass} is optimal for the bilevel problem~\eqref{eq:bilevel}. This means the lower-level solution $x_{\varsigma, \tau}^{(k)}$ satisfies
    \[
    \lim\limits_{k \to \infty}~\frac{\mathrm{Obj}(x_{\varsigma, \tau}^{(k)}, b^{(k)}, \theta^{(k)})}{ \mathrm{OPT} (b^{(k)}, \theta^{(k)})} =  1.
    \]    
\end{theorem}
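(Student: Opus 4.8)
The plan is to adapt the sandwich argument developed in Section~\ref{sec:proof-thm} for the moment-based case to the Wasserstein setting. As before, I would establish asymptotic optimality by constructing a lower bound for $\mathrm{OPT}(b^{(k)},\theta^{(k)})$ and an upper bound for $\mathrm{Obj}(x_{\varsigma,\tau}^{(k)},b^{(k)},\theta^{(k)})$ and then showing their ratio converges to one. For the lower bound, I would use the expectation mechanism $M_0(\theta^{(k)}) = \delta_{k\hat\mu}$: since $\varrho_\PP(\cdot) \ge \E_\PP[\cdot]$ by hypothesis and every $\PP \in \mathcal A(\theta^{(k)})$ has mean within Wasserstein distance $\varepsilon^{(k)}$ of $k\hat\mu$, I would argue that the worst-case second-stage risk is at least the second-stage cost evaluated at the scaled mean (using Jensen-type concavity of $d \mapsto g(x,d)$ in the right-hand side and a bound controlling how far the true mean can drift from $k\hat\mu$), so that $\mathrm{OPT}(b^{(k)},\theta^{(k)}) \ge V_0(b^{(k)},\theta^{(k)}) + o(k)$, where $V_0$ is the linear program analogous to~\eqref{eq:V-0} with $k\mu$ replaced by $k\hat\mu$. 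The key role of the choice $\hat\sigma_i = \sqrt{2\mathrm{Tr}(\hat\Sigma)+2\varepsilon^2}$ is that it uniformly dominates the ``effective spread'' contributed by both the nominal covariance and the Wasserstein radius.

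Next I would build the upper bound by feeding the two-point distribution $M_{\varsigma,\tau}(\theta^{(k)})$ to the operations team to obtain $x_{\varsigma,\tau}^{(k)}$, then applying a truncated linear decision rule exactly as in~\eqref{eq:V-bar-tldr}: solve for $(v_{\varsigma,\tau}^{(k)}, U_{\varsigma,\tau}^{(k)})$ feasible via Proposition~\ref{prop:HU}, and bound $\mathrm{Obj}(x_{\varsigma,\tau}^{(k)},b^{(k)},\theta^{(k)})$ above by $C(x_{\varsigma,\tau}^{(k)},\mathbf{y}_{\varsigma,\tau}^{(k)},b^{(k)},\theta^{(k)})$. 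The new ingredient here is that the worst-case risk term $\sup_{\PP\in\mathcal A(\theta^{(k)})}\varrho_\PP(-p^\top \mathbf{y}_{\varsigma,\tau}^{(k)}(\tilde d))$ must now be controlled over a Wasserstein ball rather than a moment set. Because $\mathbf{y}_{\varsigma,\tau}^{(k)}$ is a TLDR, $-p^\top \mathbf{y}_{\varsigma,\tau}^{(k)}(\cdot)$ is Lipschitz (with constant bounded in terms of $p^\top U_{\varsigma,\tau}^{(k)}$) and bounded below, so I would invoke the standard Wasserstein-DRO duality / Kantorovich bounds to show that the worst-case risk over the $\varepsilon^{(k)}$-ball exceeds the risk under $\hat\PP^{(k)}$ by at most $O(k^{s/2})\,p^\top U_{\varsigma,\tau}^{(k)}$, and similarly relate the risk under $\hat\PP^{(k)}$ to its mean using the standard risk coefficient $\alpha$ and the covariance bound $\hat\Sigma^{(k)}\preceq k^s\hat\Sigma$. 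This reproduces bounds structurally identical to Propositions~\ref{prop:loss-upper-bound} and~\ref{prop:value-upper-bound}: the gap between the TLDR cost and $V_0(b^{(k)},\theta^{(k)})$ is $O(k^{s/2})$, while $|V_0(b^{(k)},\theta^{(k)})| = k\,|V_0(b^{(1)},\theta^{(1)})|$ grows linearly by a positive-homogeneity lemma analogous to Lemma~\ref{lemma:risk-pos-homo}.

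Combining, I would write
\[
1 \;\le\; \frac{\mathrm{Obj}(x_{\varsigma,\tau}^{(k)},b^{(k)},\theta^{(k)})}{\mathrm{OPT}(b^{(k)},\theta^{(k)})} \;\le\; \frac{V_0(b^{(k)},\theta^{(k)}) + O(k^{s/2})}{V_0(b^{(k)},\theta^{(k)})} \;=\; 1 + \frac{O(k^{s/2})}{k\,V_0(b^{(1)},\theta^{(1)})},
\]
where the first inequality uses $\mathrm{OPT}(b^{(k)},\theta^{(k)}) < 0$ (a consequence of Assumption~\ref{a:model-param}\ref{a:model-param-2}, which I would re-derive in the Wasserstein setting since it now guarantees $V_0(b^{(1)},\theta^{(1)})<0$ and hence profitability at scale), and the bracketed term vanishes because $s < 2 \le 2$, i.e., $k^{s/2}/k = k^{s/2-1} \to 0$. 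Taking $k\to\infty$ gives the claimed limit of one.

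The main obstacle I anticipate is the lower-bound step: in the moment case the constraint $\E_\PP[\tilde d] = \mu$ pins the mean exactly, so applying $\varrho_\PP \ge \E_\PP$ together with concavity of $g(x,\cdot)$ yields the $V_0$ lower bound cleanly. In the Wasserstein case the mean of $\PP\in\mathcal A(\theta^{(k)})$ is only constrained to lie within $\varepsilon^{(k)}=k^{s/2}\varepsilon$ of $k\hat\mu$, so I must show that this $O(k^{s/2})$ slack in the mean only perturbs the lower bound by $O(k^{s/2})$ — which requires a Lipschitz-type bound on $x \mapsto \min_{y}\{c^\top x - p^\top y : Ay\le x, Hy\le d\}$ as a function of $d$, uniform over the relevant feasible region — and simultaneously handle the possibility that the worst-case $\PP$ spreads mass away from any single point, which is exactly where $\varrho_\PP\ge\E_\PP$ and the concavity of the second-stage value do the work. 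Making this perturbation argument rigorous while keeping the error at order $k^{s/2}$ (rather than, say, $k^{s/2}$ times an unbounded constant) is the delicate part; everything else is a transcription of the moment-based proof with $(\mu,\sigma)$ replaced by $(\hat\mu,\hat\sigma)$ and the moment-set worst-case bounds replaced by Wasserstein duality estimates.
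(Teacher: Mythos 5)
Your overall architecture (sandwich between $V_0$ and a TLDR-based upper bound, with the gap of order $k^{s/2}$ against a linearly growing $|V_0|$) matches the paper, but two of your key steps do not go through as described, and the effort is misallocated between them.

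First, the lower bound. You flag the mean drift within the Wasserstein ball as the delicate obstacle and settle for $\mathrm{OPT}\ge V_0+o(k)$, but no drift control is needed there at all: the nominal distribution $\hat\PP^{(k)}$ itself lies in $\mathcal A(\theta^{(k)})$ (its Wasserstein distance to the center is zero) and has mean \emph{exactly} $k\hat\mu$, so $\mathrm{OPT}\ge\min_x\{c^\top x+\varrho_{\hat\PP^{(k)}}(g(x,\tilde d))\}\ge\min_x\{c^\top x+\E_{\hat\PP^{(k)}}[g(x,\tilde d)]\}\ge\min_x\{c^\top x+g(x,k\hat\mu)\}=V_0(b^{(k)},\theta^{(k)})$ with no error term (Proposition~\ref{prop:lower-wass}). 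Note also that the Jensen step requires \emph{convexity} of $d\mapsto g(x,d)$ (the value function of an LP in its right-hand side), not concavity as you wrote; concavity would give the inequality in the wrong direction. The mean drift $\lVert\mu-k\hat\mu\rVert\le k^{s/2}\varepsilon$ enters only in the \emph{upper} bound, where the supremum is over the whole ball.

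Second, and more seriously, your upper bound invokes ``standard Wasserstein-DRO duality / Kantorovich bounds'' to show the worst-case risk over the ball exceeds the risk under $\hat\PP^{(k)}$ by $O(k^{s/2})$ times a Lipschitz constant. That estimate is standard for expectations, but Assumption~\ref{a:risk-measure} does not assume convexity or subadditivity of $\varrho$, and without it a coupling argument cannot transfer $\varrho_{\PP}(f(\tilde d))$ to $\varrho_{\hat\PP^{(k)}}(f(\tilde d))$ plus a Lipschitz-times-radius term (translation invariance only absorbs constant shifts, not the $L^2(\pi)$-controlled displacement $\lVert\tilde d-\tilde d'\rVert$). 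The paper avoids this entirely via Proposition~\ref{prop:outer-approx}: the Wasserstein ball is outer-approximated by a moment set with mean confined to $\lVert\mu-k\hat\mu\rVert\le k^{s/2}\varepsilon$ and marginal variances at most $k^s\hat\sigma_i^2$, which reduces the worst-case risk to the moment-based machinery (Lemma~\ref{lemma:risk-upper-wass}) with an extra $\sup_\mu$ term of size $k^{s/2}L\varepsilon$, where $L$ is the compactness constant from $HU\le I$. You would need either this outer approximation or an additional convexity assumption on $\varrho$ to close your version. Finally, your concluding display has the sandwich reversed: since all costs are eventually negative and $\mathrm{OPT}\le\mathrm{Obj}$, the ratio satisfies $\mathrm{Obj}/\mathrm{OPT}\le 1$ and is bounded \emph{below} by $1+O(k^{s/2-1})/V_0(b^{(1)},\theta^{(1)})$, which approaches $1$ from below; as written your chain asserts the ratio is simultaneously $\ge 1$ and $\le$ a quantity strictly less than $1$.
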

We introduce one more condition that $ \varrho_{\PP}(\cdot) \ge \E_{\PP}[\cdot] $, which requires that the risk value $\varrho_{\PP}(\tilde Z)$ is larger than the expected value $\E_{\PP}[\tilde Z]$ for any nonconstant random variable $\tilde Z$~\citep{ref:rockafellar2007coherent}. This condition is satisfied by many popular risk measures, including those in Table~\ref{table:risk}.

The proposed optimal mechanism for the data-driven setting maintains the same structure as its moment-based counterpart with $(\hat\mu,\hat\sigma)$, thereby inheriting desirable properties of simplicity and tractability. Specifically, the mechanism $M_{\varsigma,\tau}$ maps the uncertainty information $\theta = (\hat\PP,\varepsilon)$ to a simple two-point distribution that encodes both the moments of the nominal distribution and the Wasserstein ball radius. Notably, the formulation in~\eqref{eq:Mall-wass} matches that of~\eqref{eq:Mall}, ensuring the lower-level problem aligns with~\eqref{eq:V-tau-k} and can be solved efficiently. Next, we provide a proof sketch for its theoretical property as summarized in Theorem~\ref{thm:W-opt}.

\subsection{Proof Sketch of Theorem~\ref{thm:W-opt}}

The Wasserstein ambiguity set does not explicitly prescribe the mean and marginal variance of the distributions in the set. Nevertheless, recent advances in Wasserstein DRO have revealed a strong connection between the $2$-Wasserstein and the moment-based ambiguity set~\citep{ref:nguyen2021mean}, allowing us to control the moment information of the Wasserstein ambiguity set.

\begin{proposition}[Outer approximation]\label{prop:outer-approx}
Under the scaling scheme~\eqref{eq:wass-scaling} and $\mathcal{A}(\theta^{(k)})$ defined in~\eqref{eq:wass-set}, define $ \hat\sigma_{i} \Let \sqrt{ 2\mathrm{Tr}(\hat\Sigma) + 2\varepsilon^2 }$ for all $i \in[\Nd]$. It holds that
\[
\mathcal A(\theta^{(k)}) \subseteq \mathcal R(k\hat\mu,k^{\frac{s}{2}}\hat\sigma,k^{\frac{s}{2}}\varepsilon) \Let \left\{ \PP\in\mathcal{M}_2:
\begin{array}{l}
   \E_{\PP}[\tilde d] = \mu,~   \lVert \mu - k\hat \mu \rVert \le k^{\frac{s}{2}}\varepsilon  \\
   \E_{\PP}[(\tilde d_i - \mu_i)^2] \le k^s\hat\sigma_i^2 \quad \forall i \in[\Nd] 
\end{array}
\right\}.
\]
\end{proposition}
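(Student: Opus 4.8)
I would argue by fixing an arbitrary $\PP \in \mathcal A(\theta^{(k)})$ and checking directly that it satisfies both defining inequalities of $\mathcal R(k\hat\mu, k^{\frac{s}{2}}\hat\sigma, k^{\frac{s}{2}}\varepsilon)$, following the moment-control strategy of \citet{ref:nguyen2021mean}. The only tool needed is a coupling: since $\PP,\hat\PP^{(k)} \in \mathcal M_2$ and the cost $\lVert\cdot\rVert^2$ is continuous, the infimum defining $\Wass(\PP,\hat\PP^{(k)})$ is attained, so there is $\pi \in \Pi(\PP,\hat\PP^{(k)})$ with $\E_\pi[\lVert \tilde d - \tilde d' \rVert^2] \le (\varepsilon^{(k)})^2 = k^s\varepsilon^2$ (if one prefers to avoid the attainment claim, take an $\eta$-optimal coupling and let $\eta \downarrow 0$). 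Write $\mu \Let \E_\PP[\tilde d]$ and recall from the scaling~\eqref{eq:wass-scaling} that $\hat\PP^{(k)}$ has mean $k\hat\mu$ and covariance $\hat\Sigma^{(k)} \preceq k^s\hat\Sigma$.

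For the mean inequality, the plan is to apply Jensen's inequality to the convex map $z \mapsto \lVert z \rVert$:
\[
\lVert \mu - k\hat\mu \rVert = \big\lVert \E_\pi[\tilde d - \tilde d'] \big\rVert \le \E_\pi\big[\lVert \tilde d - \tilde d' \rVert\big] \le \big(\E_\pi[\lVert \tilde d - \tilde d' \rVert^2]\big)^{1/2} \le k^{\frac{s}{2}}\varepsilon .
\]
For the marginal second-moment inequalities, I would first use that the variance is the smallest mean-squared deviation about any center, so $\E_\PP[(\tilde d_i - \mu_i)^2] \le \E_\PP[(\tilde d_i - k\hat\mu_i)^2] = \E_\pi[(\tilde d_i - k\hat\mu_i)^2]$ for each $i \in [\Nd]$; then decompose $\tilde d_i - k\hat\mu_i = (\tilde d_i - \tilde d_i') + (\tilde d_i' - k\hat\mu_i)$ and apply $(a+b)^2 \le 2a^2 + 2b^2$ to get
\[
\E_\pi[(\tilde d_i - k\hat\mu_i)^2] \le 2\E_\pi[\lVert \tilde d - \tilde d' \rVert^2] + 2\hat\Sigma_{ii}^{(k)} \le 2k^s\varepsilon^2 + 2k^s\hat\Sigma_{ii} \le k^s\big(2\,\mathrm{Tr}(\hat\Sigma) + 2\varepsilon^2\big) = k^s\hat\sigma_i^2 ,
\]
where the middle step uses $\hat\Sigma^{(k)} \preceq k^s\hat\Sigma$ (hence $\hat\Sigma_{ii}^{(k)} \le k^s\hat\Sigma_{ii}$) and the last uses $\hat\Sigma_{ii} \le \mathrm{Tr}(\hat\Sigma)$ for the PSD matrix $\hat\Sigma$. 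Since $\PP$ also has support in $\R_+^{\Nd}$ and lies in $\mathcal M_2$, these two bounds place $\PP$ in $\mathcal R(k\hat\mu, k^{\frac{s}{2}}\hat\sigma, k^{\frac{s}{2}}\varepsilon)$, which is the claimed inclusion.

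I do not foresee a genuine obstacle: the argument is a short chain of Jensen's inequality, the variance-minimization identity, and $(a+b)^2 \le 2a^2+2b^2$. The only fine points are the existence (or $\eta$-approximation) of an optimal $2$-Wasserstein coupling and the fact that expectations of functions of $\tilde d$ alone coincide under $\PP$ and under the marginal of $\pi$, both standard. Conceptually, the proposition merely records that the sub-linear radius scaling $\varepsilon^{(k)} = k^{\frac{s}{2}}\varepsilon$ forces the induced mean deviation to scale like $k^{\frac{s}{2}}$ and the induced marginal variances to scale like $k^s$, matching the moment-based scaling~\eqref{eq:scaling} and thereby letting us transfer the moment-based analysis (Theorem~\ref{thm:opt} and Propositions~\ref{prop:lower}--\ref{prop:value-upper-bound}) to the Wasserstein setting.
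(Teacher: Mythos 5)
Your proof is correct, but it takes a genuinely different route from the paper's. The paper does not work with couplings at all: it invokes the Gelbrich-hull result of \citet[Theorem~2]{ref:nguyen2021mean} to embed $\mathcal A(\theta^{(k)})$ into a set constrained by $\lVert \mu - \hat\mu^{(k)}\rVert^2 + \mathrm{Tr}[\Sigma + \hat\Sigma^{(k)} - 2(\Sigma^{1/2}\hat\Sigma^{(k)}\Sigma^{1/2})^{1/2}] \le (\varepsilon^{(k)})^2$, drops the nonnegative Bures term to get the mean bound, and then lower-bounds the Bures term via an SDP dual with the feasible point $(R_1,R_2)=(\tfrac{1}{2}I, 2I)$ to deduce $\mathrm{Tr}(\Sigma) \le 2\mathrm{Tr}(\hat\Sigma^{(k)}) + 2(\varepsilon^{(k)})^2$ and hence $\Sigma_{ii} \le k^s(2\mathrm{Tr}(\hat\Sigma)+2\varepsilon^2)$. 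You instead fix a (near-)optimal coupling $\pi$ and use Jensen for the mean, the variance-minimization identity, and $(a+b)^2 \le 2a^2+2b^2$ for the marginal second moments; I verified each step, including $\E_\pi[(\tilde d_i'-k\hat\mu_i)^2]=\hat\Sigma^{(k)}_{ii} \le k^s\hat\Sigma_{ii} \le k^s\mathrm{Tr}(\hat\Sigma)$, and the two routes land on exactly the same constant $\hat\sigma_i^2 = 2\mathrm{Tr}(\hat\Sigma)+2\varepsilon^2$. Your argument is more elementary and self-contained (no external structure theorem, no semidefinite duality), at the cost of discarding the full covariance information that the paper's intermediate Gelbrich set retains; the paper's route would also let one tighten the constants by optimizing over the dual pair $(R_1,R_2)$ rather than fixing $(\tfrac{1}{2}I,2I)$, though as written neither proof exploits that slack. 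Either proof suffices for the downstream use of the proposition, since all that matters later is that the mean deviation scales as $k^{s/2}$ and the marginal variances as $k^s$.
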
 

One can intuitively recognize that the scaling scheme~\eqref{eq:wass-scaling} controls the (co)variance of the nominal distribution to grow sub-quadratically in $k$ and the radius of the Wasserstein ball to grow sub-linearly in $k$. Proposition~\ref{prop:outer-approx} shows that the scaling scheme~\eqref{eq:wass-scaling} further controls the marginal standard deviation growth of any distribution in the Wasserstein ambiguity to be sub-linear, which is reminiscent of the scaling scheme in the moment-based setting of Section~\ref{sec:moment}. However, extending the proofs from the moment-based ambiguity to the Wasserstein setting is nontrivial. The primary challenges arise from the absence of $M_{\varsigma,\tau}(\theta^{(k)}) \in \mathcal{A}(\theta^{(k)})$ and the flexibility in the first-moment value. To address these challenges, we outline the key steps of our proof below.

The proof strategy involves establishing a lower bound and an upper bound for $\mathrm{OPT}(b^{(k)},\theta^{(k)})$ and $\mathrm{Obj}(x_{\varsigma,\tau}^{(k)},b^{(k)},\theta^{(k)})$, respectively. For the lower bound construction, the condition that $\varrho_{\PP}(\cdot) \ge \E_{\PP}[\cdot]$ ensures that $V_0(b^{(k)},\theta^{(k)})$ still serves as a lower bound for the optimal cost $\mathrm{OPT}(b^{(k)},\theta^{(k)})$ which follows from the convexity of the second-stage cost function $g(x,\cdot)$ for all $x\in\R_+^{\Nx}$. Furthermore, it holds that $V_0(b^{(k)},\theta^{(k)}) = k V_0(b^{(1)},\theta^{(1)})$, implying that the optimal cost under the expectation mechanism decreases linearly with $k$ and tends towards negative infinity as $k$ increases. To bound $\mathrm{Obj}(x_{\varsigma,\tau}^{(k)},b^{(k)},\theta^{(k)})$ from above, we construct a feasible TLDR $\mathbf{y}_{\varsigma,\tau}^{(k)}$ corresponding to the first-stage decision $x_{\varsigma,\tau}^{(k)}$. The cost generated by $(x_{\varsigma,\tau}^{(k)}, \mathbf{y}_{\varsigma,\tau}^{(k)})$ can be further upper bounded by the linear negative term $V_0(b^{(k)},\theta^{(k)})$ plus a sub-linear positive term in $k$, which also tends towards negative infinity as $k$ increases. Consequently, the ratio between $\mathrm{Obj}(x_{\varsigma,\tau}^{(k)},b^{(k)}, \theta^{(k)})$ and $\mathrm{OPT}(b^{(k)}, \theta^{(k)})$ is larger than the ratio between the established upper and lower bounds, which converges to one as $k$ goes to infinity. By the sandwich theorem, we conclude that our mechanism $M_{\varsigma,\tau}$ achieves optimal performance as its objective value matches the theoretical optimum for problem~\eqref{eq:bilevel}. Appendix~\ref{appendix:wass} presents the complete derivation of the technical proofs.

\section{Numerical Studies}\label{sec:numerical}

We focus on the moment-based uncertainty information $\theta=(\mu,\sigma)$ in Section~\ref{sec:moment}. Our analysis consists of three main experiments: (i) Section~\ref{sec:asym-exp} compares our decentralized framework against the TLDR approximation of two-stage DRO problems; (ii) Section~\ref{sec:robustness} examines the robustness of our first-stage (lower-level) decisions facing distributional shifts between training and testing datasets; and (iii) Section~\ref{sec:real} evaluates the practical effectiveness of our approach using a real-world case study from a China-based retail company's two-echelon inventory problem.

Parts (i) and (ii) focus on assemble-to-order problems without product flexibility under which $H=I$. Appendix~\ref{appendix:instances} describes the details of the model and constructs 320 combinations of $(c,p, A,\theta)$ for analysis. All optimization problems were modeled with CVXPY in Python and solved by MOSEK 10.2.3 on a server with 128 GB RAM and a 2.6-GHz 104-core Intel Xeon processor. The source code for reproducing the results can be found in the supplementary material.

\subsection{Performance Comparison against DRO+TLDR}\label{sec:asym-exp}
The first experiment compares our decentralized framework against the truncated linear decision rule (TLDR) approximation, a classic method to solve two-stage problems.
The TLDR $\mathbf{y}: d \mapsto \{ v, Ud \}$ provides the necessary flexibility to generate feasible second-stage policies as analyzed in Section~\ref{sec:upper-bound}.
The TLDR approximation is tractable under a particular condition: $\varrho_{\PP}(\cdot) = \E_{\PP}[\cdot]$ and $H=I$. This unique case represents the only scenario where a comparison between TLDR and our decentralized framework is possible, making it the focus of our analysis. In such a case, $U=I$ becomes an optimal solution with respect to the variable $U$, simplifying the TLDR to $\mathbf{y}: d \mapsto \min\{ v, d \}$ with $v$ to be optimized.\footnote{We provide a formal proof of this claim in Proposition~\ref{prop:wc-dual}.} Consequently, the expected loss $\E_{\PP}[-p^\top \min\{ v, \tilde d \} ]$ is separable in $\tilde d$, which enables a tractable reformulation formalized in Proposition~\ref{prop:wc-dual}. 

\begin{proposition}[Second-order cone reformulation]\label{prop:wc-dual}
    Suppose that Assumption~\ref{a:ambiguity} is satisfied and $H=I\in\R_+^{N\times N}$, i.e., $\Ny=\Nd = N$. The optimal truncated linear decision rule for the second-stage problem~\eqref{eq:g} takes the form $\mathbf{y}:d \mapsto \min\{v,d\}$ with $v$ to be optimized. Furthermore, for $\theta=(\mu,\sigma)$, the worst-case expected loss has the reformulation:
     \begin{equation} \label{eq:risk-reformulation}
   \sup_{\PP\in\mathcal A(\theta)} \E_{\PP}[-p^\top \min\{ v ,\tilde d \}]  = \left\{
         \begin{array}{cll}
          \min  & \sum\limits_{i=1}^{N} \left[\mu_i \lambda_i + (\mu_i^2 + \sigma_i^2)s_i + r_i \right]  \\
            \st & \lambda \in \R^N,~s \in \R_+^N,~r\in\R^N,~q_1 \in \R_+^N,~q_2\in\R_+^{N} \\
                & (s_i + r_i + p_i v_i ,  q_{1,i} - \lambda_i , s_i - r_i - p_i v_{i} ) \in \mathcal Q^3 &\forall i \in [N] \\
                & (s_i + r_i ,  q_{2,i} - \lambda_i - p_i , s_i - r_i ) \in \mathcal Q^3 &\forall i \in [N],
        \end{array}\right.
    \end{equation}
where $\mathcal Q^3$ is the second-order cone in $\R^3$ defined as $\{ (t, z_1, z_2)\in \R^3: t \ge \lVert (z_1,z_2)\rVert\}$.
\end{proposition}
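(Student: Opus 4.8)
The plan is to establish the two assertions in turn: first that, when $H=I$, choosing the slope matrix $U=I$ is without loss of optimality, so the truncated rule collapses to $\mathbf y:d\mapsto\min\{v,d\}$; and then that the resulting worst-case expected loss is obtained by dualizing a family of univariate moment problems and rewriting the associated quadratic constraints as second-order cone constraints.

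For the first claim, recall from Proposition~\ref{prop:HU} that a rule $\mathbf y:d\mapsto\min\{v,Ud\}$ is feasible for a first-stage decision $x$ precisely when $Av\le x$ and $HU\le I$. With $H=I$ and $U\in\R_+^{N\times N}$, the constraint $HU\le I$ forces $U$ to be diagonal with entries in $[0,1]$, hence $Ud\le d$ componentwise for every $d\ge 0$. Since $p\ge 0$ and $t\mapsto\min\{v_i,t\}$ is nondecreasing, $-p^\top\min\{v,Ud\}\ge-p^\top\min\{v,d\}$ for all $d\ge 0$, so replacing $U$ by $I$ only decreases the expected loss $\E_\PP[-p^\top\min\{v,U\tilde d\}]$ while preserving feasibility ($HI=I\le I$) and leaving both $c^\top x$ and the constraint $Av\le x$ untouched. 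Thus $U=I$ is optimal and the optimal TLDR has the asserted form, with $v$ subject only to $Av\le x$.

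For the reformulation, I would first exploit separability: the loss $-p^\top\min\{v,d\}=\sum_i\big(-p_i\min\{v_i,d_i\}\big)$ and the constraints defining $\mathcal A(\theta)$ decouple across coordinates, so $\PP\in\mathcal A(\theta)$ iff each marginal $\PP_i$ is supported on $\R_+$ with $\E_{\PP_i}[\tilde d_i]=\mu_i$ and $\E_{\PP_i}[(\tilde d_i-\mu_i)^2]\le\sigma_i^2$, and the product of feasible marginals is feasible. Hence the worst-case expectation equals $\sum_{i=1}^N\sup_{\PP_i}\E_{\PP_i}[-p_i\min\{v_i,\tilde d_i\}]$, a sum of univariate generalized moment problems. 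For each $i$, write $\phi_i(t)=-p_i\min\{v_i,t\}=\max\{-p_it,-p_iv_i\}$ and dualize $\sup\{\E_{\PP_i}[\phi_i(\tilde d_i)]:\PP_i\in\mathcal P(\R_+),\ \E[\tilde d_i]=\mu_i,\ \E[\tilde d_i^2]\le\mu_i^2+\sigma_i^2\}$; standard conic (Isii-type) moment duality gives the dual $\inf\{r_i+\mu_i\lambda_i+(\mu_i^2+\sigma_i^2)s_i:s_i\ge 0,\ r_i+\lambda_it+s_it^2\ge\phi_i(t)\ \forall t\ge 0\}$ with zero gap and attainment. The semi-infinite constraint splits into $s_it^2+(\lambda_i+p_i)t+r_i\ge 0$ and $s_it^2+\lambda_it+(r_i+p_iv_i)\ge 0$ for all $t\ge 0$; a nonnegative quadratic $at^2+bt+c\ge 0$ on $[0,\infty)$ is equivalent to the existence of $q\ge 0$ making $at^2+(b-q)t+c\ge 0$ on all of $\R$, i.e.\ $a\ge 0$ and $4ac\ge(q-b)^2$, and rewriting $4ac-(q-b)^2\ge 0$ as $(a+c)^2\ge(a-c)^2+(q-b)^2$ yields exactly the cones $\mathcal Q^3$ in~\eqref{eq:risk-reformulation}, with $q_{1,i},q_{2,i}\ge 0$ the slacks for the two inequalities. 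Summing over $i$ proves the identity.

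The main obstacle is the duality step: I must verify strong duality (no gap and dual attainment) for the univariate moment problems, which hinges on a Slater-type condition stating that $(\mu_i,\mu_i^2+\sigma_i^2)$ lies in the interior of the moment cone of measures on $\R_+$ — true when $\mu_i>0$ and $\sigma_i>0$ — together with a direct treatment of the degenerate cases $\mu_i=0$ or $\sigma_i=0$, where the $i$-th marginal is essentially pinned down and the corresponding term is handled by inspection. The separability reduction and the quadratic-to-SOC passage are routine by comparison.
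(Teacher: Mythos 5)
Your proposal is correct and follows essentially the same route as the paper: $U=I$ is optimal because $HU\le I$ with $H=I$ forces $U\le I$ and monotonicity of the loss in $Ud$, the worst-case expectation is handled by conic moment duality, and the semi-infinite nonnegativity of a quadratic on $\R_+$ is converted to a second-order cone constraint via a nonnegative slack multiplier. The only cosmetic difference is that you exploit separability at the primal level (decomposing into univariate moment problems before dualizing) whereas the paper dualizes the joint moment problem and lets the dual constraints decouple coordinatewise; your explicit attention to the Slater condition and the degenerate cases $s_i=0$, $\mu_i=0$, $\sigma_i=0$ is a point the paper addresses only by citation or briefly.
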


We investigate the asymptotic behavior of two approximation schemes and examine instances characterized by $(c,p, A,\theta)$ under two budget scenarios: $b\in\{0.5c^\top A\mu, 2c^\top A\mu \}$. With $(b^{(1)},\theta^{(1)}) = (b,\theta)$, we scale the budget and uncertainty moments $(b^{(k)},\theta^{(k)})$ according to the scheme outlined in~\eqref{eq:scaling} with $s=1$, where $k$ ranges from 1 to 50.

For the $k$-th problem, we find the optimal solution to the TLDR approximation problem:
\be\label{eq:tldr-dro}
(x_{\mathrm{TLDR}}^{(k)},v_{\mathrm{TLDR}}^{(k)} ) \in \left\{
\begin{array}{rl}
    \arg\min & c^\top x + \sup\limits_{\PP\in\mathcal A(\theta^{(k)})} \E_{\PP}[-p^\top\min\{v,\tilde d\} ]   \\
    \st  & x\in\mathcal X(b^{(k)}),~v\in\R_+^N,~Av\le x,
\end{array} \right.
\ee
by solving a second-order cone programming problem based on Proposition~\ref{prop:wc-dual}. Under the first-stage decision $x_{\mathrm{TLDR}}^{(k)}$, the optimal second-stage TLDR is $\mathbf{y}_{\mathrm{TLDR}}^{(k)}: d\mapsto \min\{v_{\mathrm{TLDR}}^{(k)},d\}$. The optimal objective value of problem~\eqref{eq:tldr-dro} equals $C(x_{\mathrm{TLDR}}^{(k)},\mathbf{y}_{\mathrm{TLDR}}^{(k)},b^{(k)},\theta^{(k)})$. 
Under the decentralized framework, we find the lower-level decision $x_{\varsigma, \tau}^{(k)}$ by solving problem~\eqref{eq:V-tau-k} under the mechanism $M_{\varsigma, \tau}(\theta^{(k)})$. 
We employ its second-stage decision rule $\mathbf{y}_{\varsigma,\tau}^{(k)}: d \mapsto \{ v_{\varsigma,\tau}^{(k)},d \}$ constructed in~\eqref{eq:V-bar}, whose associated cost $C(x_{\varsigma,\tau}^{(k)}, \mathbf{y}_{\varsigma,\tau}^{(k)}, b^{(k)}, \theta^{(k)})$ provides an upper bound for $\mathrm{Obj}(x_{\varsigma,\tau}^{(k)}, b^{(k)}, \theta^{(k)})$ and can be computed through Proposition~\ref{prop:wc-dual}. 

To quantify the performance gap between the two approaches, we compare the cost of $(x_{\varsigma,\tau}^{(k)}, \mathbf{y}_{\varsigma,\tau}^{(k)})$ and $(x_{\mathrm{TLDR}}^{(k)}, \mathbf{y}_{\mathrm{TLDR}}^{(k)})$ through the worst-case ratio~(WCR):
\[
    \text{WCR} \triangleq \frac{ C(x_{\varsigma,\tau}^{(k)}, \mathbf{y}_{\varsigma,\tau}^{(k)}, b^{(k)}, \theta^{(k)} ) }{C(x_{\mathrm{TLDR}}^{(k)}, \mathbf{y}_{\mathrm{TLDR}}^{(k)}, b^{(k)}, \theta^{(k)} )}.
\]
Since $(x_{\varsigma,\tau}^{(k)},v_{\varsigma,\tau}^{(k)})$ is feasible for problem~\eqref{eq:tldr-dro}, its suboptimality implies that for all $(\varsigma,\tau)\in[0,\sigma]\times [0,\tau_{\max}]$, we have $
C(x_{\varsigma,\tau}^{(k)}, \mathbf{y}_{\varsigma,\tau}^{(k)}, b^{(k)}, \theta^{(k)} ) \ge C(x_{\mathrm{TLDR}}^{(k)}, \mathbf{y}_{\mathrm{TLDR}}^{(k)}, b^{(k)}, \theta^{(k)} )$.
Hence, for profitable instances with negative cost values, the WCR is bounded above by one, and a higher value indicates a smaller gap between our decentralized approach and the TLDR approximation.

\begin{figure}[htbp]

    \centering
\includegraphics[width=\linewidth]{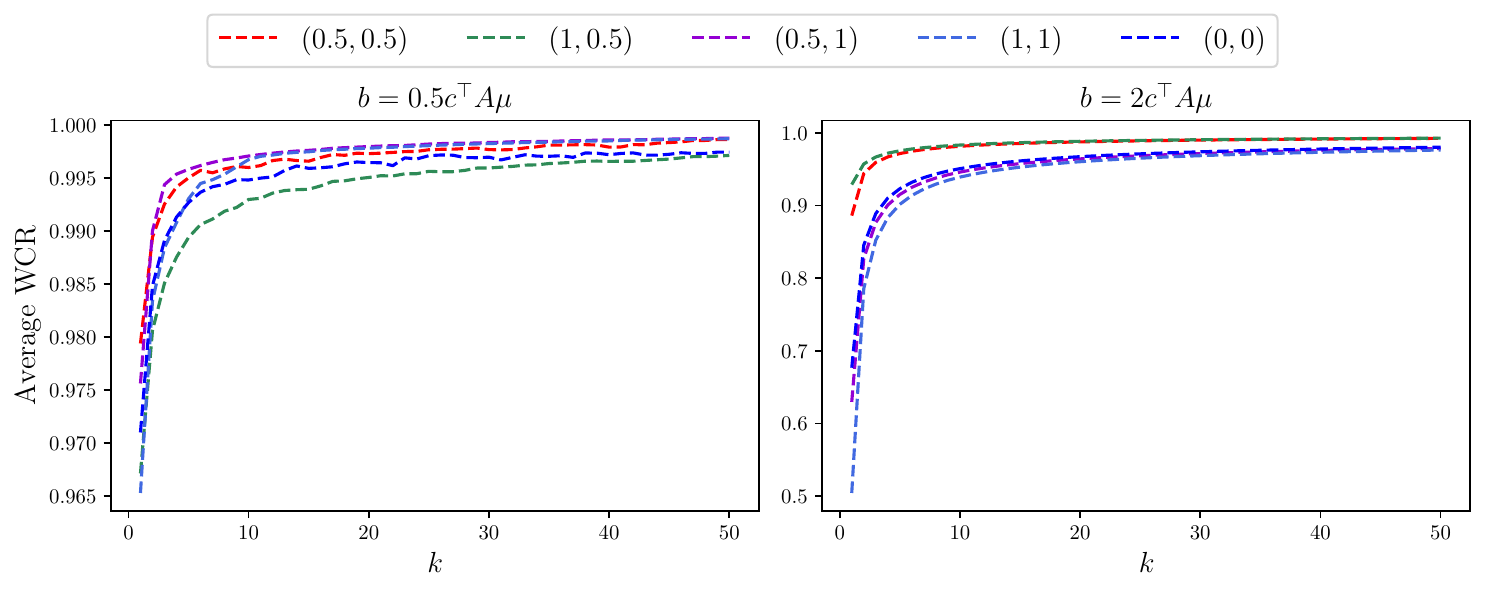}
    \caption{Average WCR comparison for varying mechanisms under two budget scenarios as the problem scale $k$ grows. The mechanism $M_{\varsigma,\tau}$ is parameterized by $(\kappa,\eta)$ with $(\varsigma, \tau) = (\kappa\sigma,\eta\tau_{\max})$. A higher WCR indicates a smaller gap between our decentralized approach and TLDR approximation.}
    \label{fig:comparison-exp}
\end{figure}
We examine five mechanisms: the expectation mechanism $M_0$ and four configurations with $\varsigma = \kappa \sigma$ for $\kappa \in \{0.5,1\}$, and $\tau = \eta\tau_{\max}$ for 
$\eta\in\{0.5,1\}$. Since $\tau_{\max}$ is nonlinear in $\varsigma$, $(\kappa,\eta) = (0.5,1)$ and $(1,0.5)$ yield different mechanisms. For consistency, we designate the expectation mechanism $M_0$ with the pair $(\kappa,\eta) = (0,0)$.
Figure~\ref{fig:comparison-exp} shows how the average WCR in 320 instances varies with the scaling factor $k$ under different mechanisms and two budget scenarios. The WCRs of all the mechanisms converge to one as the problem scale $k$ increases. Remarkably, in a small budget $b=0.5c^\top A\mu$, the average WCR achieves over 0.965 for all mechanisms and problem scales, indicating that our decentralized approach performs nearly as well as the TLDR approximation. Under a large budget $b=2c^\top A\mu$, mechanisms with $\tau = 0.5\tau_{\max}$ achieve WCR consistently above 0.9.
While TLDR approximation generates marginally better solutions for the specific case with $\varrho_{\PP}(\cdot) = \E_{\PP}[\cdot]$ and $H=I$, our decentralized framework offers two significant advantages. It naturally extends to general risk measures and constraint matrices. Further, it demonstrates superior computational efficiency, running about 100 times faster than the TLDR approximation method when handling 500 products; see Appendix~\ref{appendix:time-comparison} for an experiment comparing the running time. These advantages and the minimal performance gap establish our decentralized approach as a practical and versatile solution for real-world applications.

\subsection{Robustness and Data-Driven Performance}\label{sec:robustness}

The second experiment showcases that our approach is robust to distributional shifts. In this experiment, we make decisions based on the training data, then measure their performance on the test data. Here, the test data follows a \textit{different} distribution than the training data. This is a common benchmark when we expect that the training data may not reflect correctly the future data, which may arise when there are temporal shifts (such as seasonality), or population shifts (such as customer behaviors changing before and after COVID).

We generate data based on the truncated multivariate normal distributions, where the training and testing distributions share diagonal components in their covariance matrices but differ in non-diagonal components determined by their respective correlation coefficients $\rho_{\tr}$ and $\rho_{\te}$. Specifically, the covariance matrix $\Sigma$ satisfies $ \Sigma_{ii}= \sigma_i^2  $ for all $i$, and $\Sigma_{ij}= \rho \sigma_i \sigma_j $ for $i \neq j$. We examine two training scenarios $\rho_{\tr} \in \{ 0, 1\}$ against eleven testing scenarios $\rho_{\te} \in \{ 0, 0.1,\ldots,1\}$, which then define the covariance matrices $\Sigma_{\tr}$ and $\Sigma_{\te}$. 
Subsequently, we generate training datasets $\mathcal{D}_{\tr}$ of varying sample sizes $n_{\tr}\in\{100,200,400\}$ and testing datasets $\mathcal{D}_{\te}$ of fixed sample size $n_{\te} = 1000$. Notably, the number of products in our instances ranges from 2 to 30, with half of them comprising no more than 10 products. Consequently, even the smallest training sample size of 100 is considered relatively substantial.

We conduct $5$-fold cross-validation (CV) on the training dataset $\mathcal{D}_{\tr}$ to tune the mechanism parameters $(\varsigma,\tau)$ of $M_{\varsigma,\tau}$ defined in~\eqref{eq:Mall}. 
For each training fold with empirical moments $(\hat\mu,\hat\sigma)$, we parameterize $(\varsigma,\tau)$ as $(\varsigma,\tau) = (\kappa \hat\sigma, \eta\hat\tau_{\max})$, where $\hat\tau_{\max}$ is computed from $(\hat\mu,\varsigma)$. By searching the grid $\kappa,\eta\in\{0,0.05,0.10,\ldots,1\}$, we identify the optimal pair $(\kappa\opt,\eta\opt)$ that minimizes the average cost in the validation fold.
Consequently, the CV-tuned mechanism uses
$(\varsigma\opt,\tau\opt) = (\kappa\opt \sigma_{\tr}, \eta\opt \tau_{\max})$, where $(\mu_{\tr},\sigma_{\tr})$ are the empirical moments of the \textit{full} training dataset $\mathcal{D}_{\tr}$ and $\tau_{\max}$ is derived from $(\mu_{\tr},\varsigma\opt)$. The cross-validation procedure is computationally efficient, requiring only two linear programs~\eqref{eq:V-tau-k} and~\eqref{eq:V-bar} per iteration. Further details about the dataset generation and cross-validation procedure are provided in Appendix~\ref{appendix:simulation-robust}. 

To emphasize the dependency on $\mathcal D_{\tr}$, we denote by $ x_{\varsigma\opt, \tau\opt} (\mathcal D_{\tr})$ the first-stage solution derived from the mechanism $M_{\varsigma\opt,\tau\opt}$. We benchmark against the sample average approximation (SAA) method, which minimizes the cost on the training dataset $\mathcal D_{\tr}$ to yield the SAA solution
\be\label{eq:saa}
x_{\text{SAA}}(\mathcal D_{\tr}) \in \arg \min\limits _ {x\in\mathcal X(b)}  ~ c^\top x + \hat\PP_{\tr}\text{-CVaR}_{0.05}( g(x,\tilde d) ),
\ee
where $\hat\PP_{\tr}$ is the empirical distribution of the training dataset $\mathcal D_{\tr}$. For any solution $x$, we evaluate its performance on the testing dataset using $J(x, \mathcal{D}_{\te}) \Let c^\top x + \hat\PP_{\te}\text{-CVaR}_{0.05} ( g(x,\tilde d) )$, where $\hat\PP_{\te}$ is the empirical distribution of $\mathcal{D}_{\te}$. Let $J\opt(\mathcal D_{\te}) \Let J(x_{\text{SAA}}(\mathcal D_{\te}), \mathcal D_{\te})$ denote the optimal cost with perfect information, attained at $x_{\text{SAA}}(\mathcal D_{\te})$ which solves the SAA problem on the testing data.

We evaluate the magnitude of out-of-sample performance improvement over the SAA method using the metric
\begin{equation}\label{eq:robust-index}
\text{Robustness Index} \triangleq \frac{ J(x_{\varsigma\opt,\tau\opt }(\mathcal D_{\tr}) , \mathcal D_{\te})  -   J(x_{\text{SAA}} (\mathcal D_{\tr}), \mathcal D_{\te} ) }{ J\opt(\mathcal D_{\te}) }.
\end{equation}
As the goal is to minimize the cost, a smaller negative value of the nominator means that the solution $x_{\varsigma\opt,\tau\opt } ( \mathcal D_{\tr} )$ generates lower cost than the SAA solution $x_{\text{SAA}} (\mathcal D_{\tr})$. In profitable scenarios, the denominator $J\opt(\mathcal{D}_{\te})$ is strictly negative; hence, a larger positive Robustness Index indicates that our mechanism has better out-of-sample performance than the SAA method.
 \begin{figure}[!ht]
    \centering
        \includegraphics[width=\linewidth]{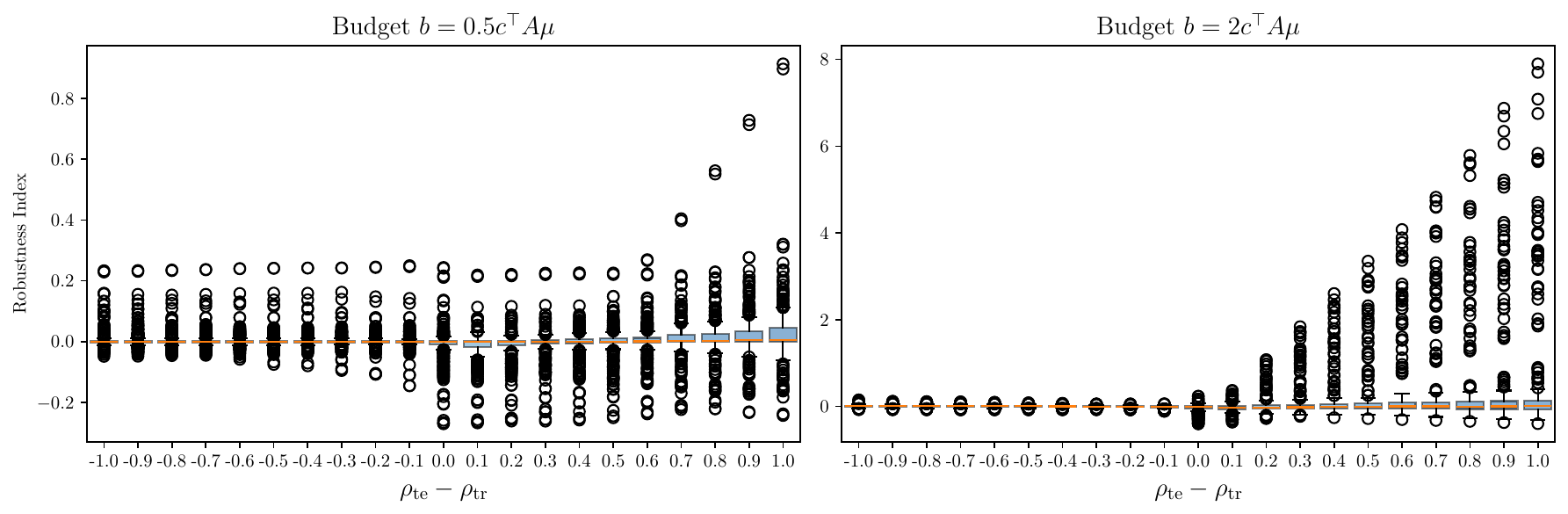}
    \caption{The Robustness Index of CV-tuned solution $x_{\varsigma\opt,\tau\opt}$ across 320 instances under different correlation coefficient shifts $\rho_{\te} - \rho_{\tr}$ for two budget scenarios when the training dataset has $n_{\tr}=200$ samples. A higher positive Robustness Index implies greater out-of-sample performance improvement over the SAA method.}
    \label{fig:robust-cvar}
    
\end{figure}
We compare the out-of-sample performance of our CV-tuned solution $x_{\varsigma\opt,\tau\opt}$ against the SAA solution $x_{\text{SAA}}$ under different correlation shifts $\rho_{\te} - \rho_{\tr} \in [-1, 1]$. We consider two budget scenarios $b=0.5c^\top A\mu$ and $b=2c^\top A\mu$ to analyze the impact of the budget magnitudes. 
Figure~\ref{fig:robust-cvar} focuses on training sample size $n_{\tr}=200$ and presents the Robustness Index of our CV-tuned solution $x_{\varsigma\opt,\tau\opt}$ under different budgets and correlation coefficient shifts across 320 instances. 
In the small budget scenario ($b=0.5c^\top A\mu$), the robustness index maintains a relatively contained range between $-0.2$ and $0.8$. While the central tendency demonstrates modest variation, notable outliers emerge, particularly for larger correlation shifts. An interesting asymmetric pattern appears in these outliers: though both positive and negative outliers exist, the positive outliers exhibit larger absolute deviations from the median compared to their negative counterparts.
The large budget scenario ($b=2c^\top A\mu$) displays an even more pronounced asymmetry in outlier distribution. The positive outliers reach substantially higher values (up to around 8) than the magnitude of negative outliers, with this asymmetry intensifying as correlation shifts increase, especially when $\rho_{\te} > \rho_{\tr} $. The asymmetric outlier pattern reveals that our CV-tuned solution has bounded downside risk compared to the SAA solution, as evidenced by the limited magnitude of negative outliers. Meanwhile, the large positive outliers, especially prominent in the high-budget scenario, indicate our solution can significantly outperform the SAA solution while maintaining limited potential losses.

\begin{table}[htbp]
  \centering
  \caption{The $p$-values of the two one-sided tests under different correlation coefficient shifts $ \rho_{\te} - \rho_{\tr}$ for two budget scenarios with training sample size $n_{\tr} =200$. Bold values indicate that the $p$-value is smaller than 0.05. }  \label{tab:signed-test}
    \begin{tabular}{cccccccc}
    \toprule
    \multirow{2}[2]{*}{$\rho_{\te} - \rho_{\tr} $ } & \multicolumn{2}{c}{Budget $b= 0.5c^\top A\mu$}       &       & \multicolumn{2}{c}{ Budget $b= 2c^\top A\mu$ } \\
\cmidrule{2-3}\cmidrule{5-6}     & $\mathcal{H}_+$    & $\mathcal{H}_-$   &   &  $\mathcal{H}_+$     & $\mathcal{H}_-$     \\
\cmidrule{1-3}\cmidrule{5-6}  
    -1.0    & \textbf{4.97E-04} & 1.00E+00 &       & \textbf{7.00E-03} & 9.96E-01 \\
    -0.9  & \textbf{3.19E-04} & 1.00E+00 &       & \textbf{7.00E-03} & 9.96E-01 \\
    -0.8  & \textbf{4.14E-05} & 1.00E+00 &       & \textbf{1.08E-02} & 9.93E-01 \\
    -0.7  & \textbf{8.43E-05} & 1.00E+00 &       & \textbf{1.08E-02} & 9.93E-01 \\
    -0.6  & \textbf{6.45E-04} & 1.00E+00 &       & \textbf{7.00E-03} & 9.96E-01 \\
    -0.5  & \textbf{1.52E-02} & 9.90E-01 &       & \textbf{7.00E-03} & 9.96E-01 \\
    -0.4  & \textbf{4.09E-02} & 9.72E-01 &       & 1.18E-01 & 9.11E-01 \\
    -0.3  & \textbf{1.91E-02} & 9.87E-01 &       & 4.06E-01 & 6.54E-01 \\
    -0.2  & \textbf{1.26E-02} & 9.92E-01 &       & 6.54E-01 & 4.06E-01 \\
    -0.1  & \textbf{2.88E-03} & 9.98E-01 &       & 9.93E-01 & \textbf{1.08E-02} \\
    0.0     & 3.00E-01 & 7.39E-01 &       & 1.00E+00 & \textbf{7.21E-13} \\
    0.1   & 6.30E-01 & 4.34E-01 &       & 1.00E+00 & \textbf{4.75E-05} \\
    0.2   & 3.39E-01 & 7.20E-01 &       & 9.97E-01 & \textbf{4.44E-03} \\
    0.3   & \textbf{2.45E-02} & 9.84E-01 &       & 9.66E-01 & \textbf{4.83E-02} \\
    0.4   & \textbf{6.86E-03} & 9.96E-01 &       & 8.48E-01 & 1.92E-01 \\
    0.5   & \textbf{5.75E-04} & 1.00E+00 &       & 7.10E-01 & 3.46E-01 \\
    0.6   & \textbf{4.83E-04} & 1.00E+00 &       & 4.69E-01 & 5.94E-01 \\
    0.7   & \textbf{5.68E-05} & 1.00E+00 &       & 4.06E-01 & 6.54E-01 \\
    0.8   & \textbf{2.52E-08} & 1.00E+00 &       & 4.06E-01 & 6.54E-01 \\
    0.9   & \textbf{1.28E-09} & 1.00E+00 &       & 3.46E-01 & 7.10E-01 \\
    1.0     & \textbf{1.11E-11} & 1.00E+00 &       & 3.46E-01 & 7.10E-01 \\
    \bottomrule
    \end{tabular} 
    
\end{table}

We further use one-sided sign tests to investigate whether the probability of our CV-tuned solution achieving \textit{lower} out-of-sample costs versus the SAA solution is higher or lower than $50\%$. Toward this goal, we set the null hypothesis as ``$\mathcal{H}_0$: our CV-tuned solution achieves \textit{lower} out-of-sample costs than the SAA solution with probability $50\%$". We test it against two alternative hypotheses as ``$\mathcal{H}_+$: our CV-tuned solution achieves \textit{lower} out-of-sample costs than the SAA solution with probability strictly higher than $50\%$" and its the reverse direction  ``$\mathcal{H}_-$: our CV-tuned solution achieves \textit{lower} out-of-sample costs than the SAA solution with probability strictly lower than $50\%$". Table~\ref{tab:signed-test} presents the $p$-values for two directions across different combinations of budgets and correlation shifts when the training dataset has $n_{\tr}=200$ samples. Lower $p$-values indicate stronger statistical evidence to reject the null hypothesis in favor of the alternative hypothesis.  
Under the small budget, our solution demonstrated statistically significant superiority over the SAA solution across most correlation shifts, with exceptions occurring at $\rho_{\te}-\rho_{\tr}\in[0.1,0.3]$. Notably, the consistently high $p$-values for $\mathcal{H}_-$ indicate that the SAA solution never dominates our solution across any correlation shifts. The large budget scenario reveals different patterns. Our solution has a higher probability of generating lower cost when $\rho_{\te} - \rho_{\tr} \le -0.5$ while the SAA solution outperforms when $\rho_{\te} - \rho_{\tr}\in[-0.1,0.3]$. In other cases, neither solution demonstrated clear dominance over the other.
Similar patterns emerge for other training sample sizes $n_{\tr}\in\{100,400\}$. The complete results and details can be found in Appendix~\ref{appendix:simulation-robust}.

\subsection{Real-World Experiment} \label{sec:real}
The third experiment benchmarks the performance of our framework in real-world deployment. We collaborate with a vibrant Chinese-based retail company committed to offering customers cost-effective products and has expanded to over 800 stores in more than 100 cities. It uses a central warehouse to supply multiple retail stores in each region. To manage inventory efficiently, the company must determine optimal warehouse stock levels \textit{before} observing actual customer demand. This scenario represents a two-echelon inventory problem with a single warehouse, as detailed in Appendix~\ref{appendix:inventory}.

We collect sales data that span six months from April to October 2024. A combination of a retail store and a product uniquely identifies each data entry. Due to the short shelf life of most products, the product list changes frequently, resulting in only 71 entries with consistent sales records over 184 days. To ensure a reliable analysis, we generate multiple instances for evaluation through a two-step process. Firstly, we randomly select 60 entries from the available 71 store-product combinations. Subsequently, for selected entries, we divide the dataset into testing and training samples by assigning a proportion $r\%$ of the data to the testing dataset $\mathcal{D}_{\te}$ and assigning the remaining $(100-r)\%$ to the training dataset $\mathcal{D}_{\tr}$. We repeat both the entries selection and dataset splitting procedures ten times, creating 100 instances of $(\mathcal{D}_{\tr},\mathcal{D}_{\te})$ for evaluation.

For each instance, we build the moment-based ambiguity set $\mathcal{A}(\hat\theta_{\tr})$, where $\hat{\theta}_{\tr} = (\hat{\mu}_{\tr}, \hat{\sigma}_{\tr})$ represents the estimated moments of the training dataset $\mathcal{D}_{\tr}$. We design the mechanism $M_{\varsigma,\tau}$ according to Section~\ref{sec:moment} and focus on the risk measure $\text{CVaR}_{0.05}$.
For parameter tuning, we perform 5-fold cross-validation on the training dataset $\mathcal{D}_{\tr}$ to determine the optimal parameter ratios $(\kappa\opt, \eta\opt)$. We set $ (\varsigma\opt, \tau\opt) = (\kappa\opt \hat{\sigma}_{\tr}, \eta\opt \hat{\tau}_{\max}) $ and find the lower-level solution $ x_{\varsigma\opt, \tau\opt} $ under the mechanism $ M_{\varsigma\opt, \tau\opt}(\hat{\theta}_{\tr}) $. We benchmark the out-of-sample performance of our CV-tuned solution $x_{\varsigma\opt,\tau\opt}$ against that of the SAA solution $ x_{\text{SAA}}$ obtained by minimizing the cost on the training dataset $\mathcal{D}_{\tr}$. As before, we consider two budget scenarios: $b=0.5c^\top A\hat\mu_{\tr}$ and $b=2c^\top A\hat\mu_{\tr}$, where $c$ is the practical unit ordering cost, $A$ is the topology matrix describing the store-product relationship (constructed in Appendix~\ref{appendix:inventory}), and $\hat{\mu}_{\tr}$ is the estimated average of $\mathcal{D}_{\tr}$. To examine how the size of the training sample affects performance, we vary the splitting proportion $r\%$ between three values: 25\%, 50\%, and 75\%. For each proportion, we generate 100 instances characterized by store-product entries and dataset-splitting results.
\begin{figure}
    \centering
\includegraphics[width=\linewidth]{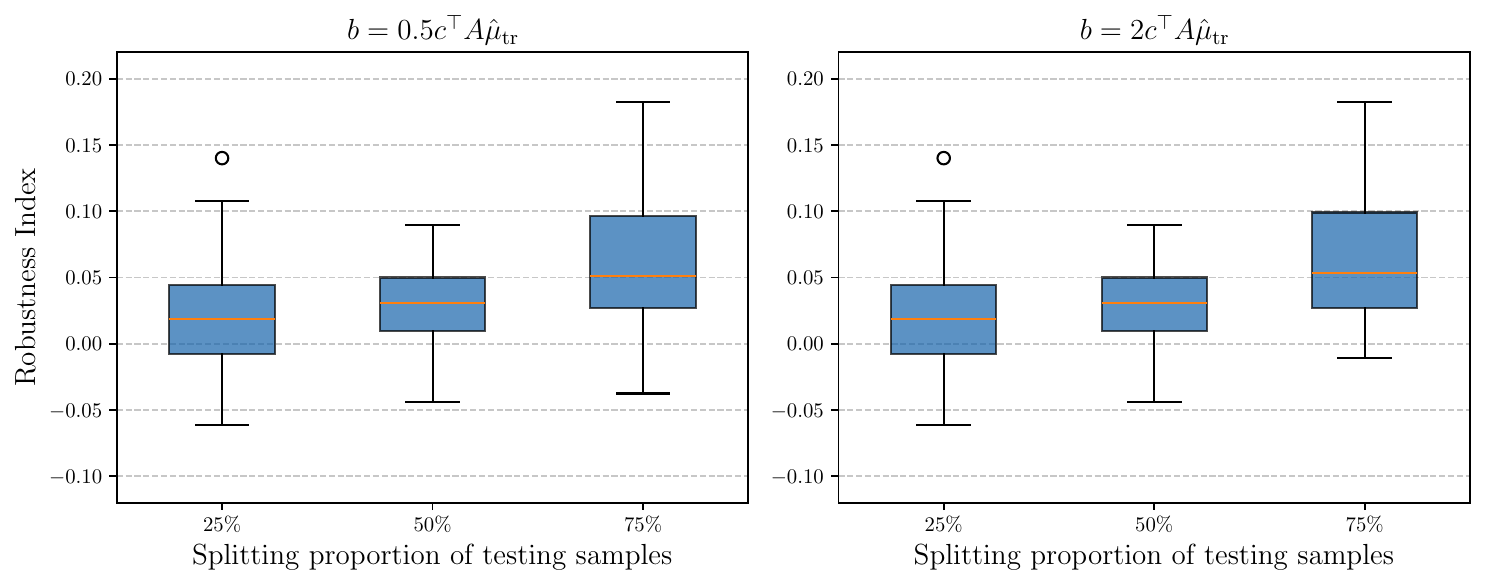}
    \caption{Robustness Index of CV-tuned solution $x_{\varsigma\opt,\tau\opt}$ under different proportions of testing samples for two budget scenarios. The training sample size decreases with the proportion of testing samples. A positive Robustness Index implies out-of-sample performance improvement over the SAA method; the higher, the better.}
    \label{fig:real-world}
    
\end{figure}

We begin our analysis with one-sided sign tests similar to the previous section. Our solution $x_{\varsigma\opt,\tau\opt}$ achieves $p$-values below 0.0004 across all splitting ratios in both budget scenarios. These statistically significant results confirm that the out-of-sample performance improvements are consistent and reliable across all 100 real-world instances. To quantify the improvement magnitude, we use the Robustness Index defined in~\eqref{eq:robust-index}, where larger positive values indicate larger out-of-sample performance improvement.
Figure~\ref{fig:real-world} shows box plots of the Robustness Index across different testing sample splitting proportions under two budget scenarios in Figure~\ref{fig:real-world}. Both budget scenarios exhibit similar patterns in their box plot distributions, suggesting stable performance across different budgets. In both figures, we can see that the Robustness Index exhibits positive values with a probability exceeding $70\%$, indicating that the solution $x_{\varsigma\opt, \tau\opt}$ has superior out-of-sample performance under risk-averse scenarios. Additionally, the Robustness Index increases as the testing proportion increases (i.e., as the training sample size decreases), highlighting the robustness of our solution when the training data are limited.

\section{Conclusion}\label{sec:conclusion}
We have introduced a novel decentralized framework for solving two-stage risk-averse distributionally robust optimization (DRO) problems, modeling the interaction between forecasting and operations teams as a Stackelberg game to reflect common organizational structures. By formalizing this game as a bilevel optimization problem, we have shown that a simple two-point distribution optimally solves the bilevel problem for both moment-based and Wasserstein ambiguity sets. This finding has significant practical implications: the operations team can solve a tractable optimization problem (reducible to a linear program under mild assumptions) to obtain asymptotically optimal first-stage decisions. This result effectively resolves the computational challenges typically associated with two-stage DRO problems. Using real sales data from a leading Chinese near-expiry goods retailer with integrated warehouse-retail operations, we have demonstrated that our solution method significantly outperforms traditional sample-average approximation methods in out-of-sample performance for risk-averse scenarios, validating its practical applicability. Our paper establishes the asymptotic optimality of lower-level solutions through a conservative bound for the critical ratio $\mathrm{Obj}/\mathrm{OPT}$ - a comprehensive analysis of its exact convergence rate warrants a separate investigation in future research.

\newpage
\bibliographystyle{informs2014}
\bibliography{ref}

\bigskip
\setcounter{equation}{0}
\numberwithin{equation}{section}

The appendix is organized as follows. Appendix~\ref{appendix:proof} contains detailed proofs of all technical results, with Appendix~\ref{appendix:moment} focusing on results related to moment-based ambiguity sets in Section~\ref{sec:proof-thm} and Appendix~\ref{appendix:wass} presenting proofs for Wasserstein ambiguity sets. Appendix~\ref{appendix:application} explores practical applications fit for our framework. Appendix~\ref{appendix:simulation} details our simulation studies, where Appendix~\ref{appendix:instances} constructs synthetic instances, Appendix~\ref{appendix:time-comparison} compares the computational time with TLDR approximation methods, and Appendix~\ref{appendix:simulation-robust} presents experimental details of robustness and data-driven performance.
\appendix
\section{Proofs of Technical Results} \label{appendix:proof}

\subsection{Proofs for Moment-based Ambiguity Set }\label{appendix:moment}
\begin{proof}[Proof of Theorem~\ref{thm:opt}.]
For any mechanism $M_{\varsigma,\tau}$ in~\eqref{eq:Mall}, let $x_{\varsigma,\tau}^{(k)}$ and $\mathbf{y}_{\varsigma, \tau}^{(k)}: d\mapsto \min\{ v_{\varsigma, \tau}^{(k)} , U_{\varsigma, \tau}^{(k)} d \}$ solve problem~\eqref{eq:V-tau-k} and~\eqref{eq:V-bar}, respectively.
As $x_{\varsigma,\tau}^{(k)}$ and $\mathbf{y}_{\varsigma, \tau}^{(k)}$ are feasible but not necessarily optimal solutions to problem~\eqref{eq:2stage-DRO} and the problem defining $g(x_{\varsigma,\tau}^{(k)}, \cdot)$, respectively, we have 
\be \label{eq:cost-ineq}
\mathrm{OPT}(b^{(k)}, \theta^{(k)}) \le \mathrm{Obj}(x_{\varsigma, \tau}^{(k)}, b^{(k)} ,\theta^{(k)}) \le C(x_{\varsigma, \tau}^{(k)},\mathbf{y}_{\varsigma, \tau}^{(k)},b^{(k)},\theta^{(k)}).
\ee 
We prove the optimality by showing that $C(x_{\varsigma, \tau}^{(k)},\mathbf{y}_{\varsigma, \tau}^{(k)},b^{(k)},\theta^{(k)})$ approaches $\mathrm{OPT}(b^{(k)}, \theta^{(k)})$ as $k$ goes to infinity, which implies that the limiting cost ratio between $\mathrm{Obj}(x_{\varsigma, \tau}^{(k)}, b^{(k)},\theta^{(k)})$ and $\mathrm{OPT}(b^{(k)}, \theta^{(k)})$ achieves its upper bound of one.

We start with the expectation mechanism $M_0$. For the first-stage decision $x_0^{(k)}$ and its second-stage decision rule $\mathbf{y}_{0}^{(k)}: d\mapsto \min\{ v_{0}^{(k)} , U_{0}^{(k)} d \}$, it holds that
\be\label{eq:C-upper}
C(x_{0}^{(k)},\mathbf{y}_{0}^{(k)},b^{(k)},\theta^{(k)}) \le  V_{0}(b^{(k)}, \theta^{(k)}) + (\frac{1}{2}+\alpha) k^{\frac{s}{2}} p^\top U_{0}^{(k)} \sigma,
\ee
which follows from Proposition~\ref{prop:loss-upper-bound} and $\bar V_{0}(b^{(k)}, \theta^{(k)}) =V_{0}(b^{(k)}, \theta^{(k)})$ in Proposition~\ref{prop:value-upper-bound}. 
By Lemma~\ref{lemma:risk-pos-homo} and Assumption~\ref{a:model-param}\ref{a:model-param-2}, we have $V_{0}(b^{(k)},\theta^{(k)}) = k V_{0}(b^{(1)},\theta^{(1)})$ and $V_{0}(b^{(1)},\theta^{(1)}) <0$. For large $k$, all terms in~\eqref{eq:C-upper} and~\eqref{eq:cost-ineq} become negative because $s<2$ and the linear negative term $ V_0(b^{(k)},\theta^{(k)})$ dominates. As all terms are negative, their ratios are equivalent to the ratios of their absolute values. We find that
\[
    1 \ge \frac{ \mathrm{Obj}(x_{0}^{(k)}, b^{(k)} ,\theta^{(k)}) }{ \mathrm{OPT}(b^{(k)}, \theta^{(k)})}   \ge \frac{C(x_{0}^{(k)},\mathbf{y}_{0}^{(k)},b^{(k)},\theta^{(k)})}{ \mathrm{OPT}(b^{(k)}, \theta^{(k)})}  \ge  \frac{k V_{0} (b^{(1)},\theta^{(1)}) + (\frac{1}{2}+\alpha)k^{\frac{s}{2}} p^\top U_{0}^{(k)}\sigma }{k V_{0} (b^{(1)},\theta^{(1)}) },
\]
where the first two inequality follows from the magnitude relationship
\[
| C(x_{0}^{(k)},\mathbf{y}_{0}^{(k)},b^{(k)},\theta^{(k)}) | \le | \mathrm{Obj}(x_{0}^{(k)}, b^{(k)} ,\theta^{(k)}) | \le | \mathrm{OPT}(b^{(k)}, \theta^{(k)})|,
\]
and the last inequality holds because 
\[
\begin{cases} |C(x_{0}^{(k)},\mathbf{y}_{0}^{(k)},b^{(k)},\theta^{(k)})|  \ge | k V_{0} (b^{(1)},\theta^{(1)}) + (\frac{1}{2}+\alpha)k^{\frac{s}{2}} p^\top U_{0}^{(k)}\sigma|  \\
|\mathrm{OPT}(b^{(k)}, \theta^{(k)})|  \le  k |V_{0} (b^{(1)},\theta^{(1)})| .
\end{cases}
\]
As the matrix $H\in\R_+^{\Nd \times \Ny }$ has no zero columns, the matrix $U\in\R_+^{\Ny\times \Nd}$ satisfying $HU\le I$ is restricted to a bounded region, which implies that $U_0^{(k)}$ remains bounded for all $k\ge 1$.
Together with the condition that $s < 2$, taking the limit as $k$ tends to infinity gives
\[
\lim\limits_{k\to\infty}\frac{k V_{0} (b^{(1)},\theta^{(1)}) + (\frac{1}{2}+\alpha)k^{\frac{s}{2}} p^\top U_{0}^{(k)}\sigma }{k V_{0} (b^{(1)},\theta^{(1)}) } = 1 \quad \implies \quad 
\lim\limits_{k\to\infty} \frac{ \mathrm{Obj}(x_{0}^{(k)}, b^{(k)} ,\theta^{(k)}) }{ \mathrm{OPT}(b^{(k)}, \theta^{(k)})}  = 1.
\]

Next, we prove the result for $M_{\varsigma, \tau}$ for $(\varsigma, \tau) \in  (0,\sigma] \times (0,\tau_{\max}]$. As $\varsigma\le \sigma$, applying Proposition~\ref{prop:loss-upper-bound} and Proposition~\ref{prop:value-upper-bound} to the separation~\eqref{eq:separation} yields that
\[
\begin{aligned}
& C(x_{\varsigma, \tau}^{(k)},\mathbf{y}_{\varsigma, \tau}^{(k)},b^{(k)},\theta^{(k)}) \\
\le & V_{0}(b^{(k)},\theta^{(k)}) + k^{\frac{s}{2}} p^\top \left[ (\frac{1}{2}+\alpha) (U_{\varsigma, \tau}^{(k)} + U_{0}^{(k)} ) + \sqrt{\frac{1-\tau}{\tau}} (U_{\varsigma, \tau}^{(k)} + \bar U_{\varsigma, \tau}^{(k)} ) + \sqrt{\frac{\tau}{1-\tau}}   \bar U_{\varsigma, \tau}^{(k)} \right] \sigma,
\end{aligned}
\]
where $\bar U_{\varsigma, \tau}^{(k)} \in\R_+^{\Ny\times\Nd}$ satisfies $H \bar U_{\varsigma, \tau}^{(k)} \le I$ and is specified in the proof of Proposition~\ref{prop:value-upper-bound}. 
Similar to the case under $M_0$, the upper bound for $ C(x_{\varsigma, \tau}^{(k)},\mathbf{y}_{\varsigma, \tau}^{(k)},b^{(k)},\theta^{(k)})$ constitutes a negative linear term and a positive sub-linear term as $s<2$, which implies that all the cost terms under $M_{\varsigma,\tau}$ are negative when 
$k$ is sufficiently large. Following the same steps as we proved for the expectation mechanism $M_0$, we have
\[
\begin{aligned}
   1 \ge & \frac{\mathrm{Obj}( x_{\varsigma, \tau}^{(k)}, b^{(k)}, \theta^{(k)} ) } { \mathrm{OPT}(b^{(k)},\theta^{(k)}) } \ge  \frac{ C(x_{\varsigma, \tau}^{(k)},\mathbf{y}_{\varsigma, \tau}^{(k)},b^{(k)},\theta^{(k)}) } { V_{0}(b^{(k)},\theta^{(k)}) }  \\
 \ge & \frac{V_{0}(b^{(k)},\theta^{(k)}) + k^{\frac{s}{2}} p^\top \left[ (\frac{1}{2}+\alpha) (U_{\varsigma, \tau}^{(k)} + U_{0}^{(k)} ) + \sqrt{\frac{1-\tau}{\tau}} (U_{\varsigma, \tau}^{(k)} + \bar U_{\varsigma, \tau}^{(k)} ) + \sqrt{\frac{\tau}{1-\tau}}   \bar U_{\varsigma, \tau}^{(k)} \right] \sigma  }{V_{0}(b^{(k)},\theta^{(k)})} \\
 = & 1 + \frac{ p^\top \left[ (\frac{1}{2}+\alpha) (U_{\varsigma, \tau}^{(k)} + U_{0}^{(k)} ) + \sqrt{\frac{1-\tau}{\tau}} (U_{\varsigma, \tau}^{(k)} + \bar U_{\varsigma, \tau}^{(k)} ) + \sqrt{\frac{\tau}{1-\tau}}   \bar U_{\varsigma, \tau}^{(k)} \right] \sigma  }{ V_{0}(b^{(1)},\theta^{(1)} )} k^{\frac{s}{2} -1 },
\end{aligned}
\]
where the coefficient of $k^{\frac{s}{2}-1}$ is finite and negative as $V_{0}(b^{(1)},\theta^{(1)} ) < 0$. The constraints $HU\le I$ and $U\ge 0$ ensure that $U_{\varsigma, \tau}^{(k)}$ and $\bar U_{\varsigma, \tau}^{(k)}$ are bounded for all $k$, as is $U_0^{(k)}$.
Given this boundedness and $s< 2$, taking the limit as $k\to\infty$ yields
\[
\lim\limits_{k\to\infty} \frac{\mathrm{Obj}( x_{\varsigma, \tau}^{(k)}, b^{(k)}, \theta^{(k)} ) } { \mathrm{OPT}(b^{(k)},\theta^{(k)}) } = 1.
\]
This completes the proof. 
\end{proof}

\begin{proof}[Proof of Proposition~\ref{prop:lower}.]
As $M_{0}(\theta^{(k)})$ concentrates at $k \mu$, we have 
\[
\varrho_{M_{0}(\theta^{(k)})}(-p^\top \mathbf{y}(\tilde d ) )  = -p^\top \mathbf{y}(k\mu),
\]
which leads to the problem 
\begin{equation*}
    V_{0}(b^{(k)},\theta^{(k)})=\left\{
\begin{array}{cl}
    \min &c^\top x - p^\top  y \\  
    \st & x \in \mathcal X (b^{(k)}),~y\in \R_+^{\Ny} \\
        & Ay\le x,~Hy\le k\mu.
\end{array} 
\right.
\end{equation*}
By construction, the ambiguity set $\mathcal A(\theta^{(k)})$ always contains the Dirac distribution that concentrates unit mass at $k\mu$, which coincides with $M_{0}(\theta^{(k)})$ following the notion from~\eqref{eq:Mall}. Let $(x\opt, \mathbf{y}\opt)$ be the optimal solution of problem~\eqref{eq:2stage-DRO} where $g(x\opt,d)$ is attained at $\mathbf{y}\opt(d)$ for all $d\in\R_+^{\Nd}$. We have 
\[ 
\begin{aligned}
    \mathrm{OPT}(b^{(k)},\theta^{(k)}) & = c^\top x\opt + \sup_{\PP \in \mathcal A(\theta^{(k)})} \varrho_{\PP}(-p^\top \mathbf{y}\opt (\tilde d) ) \ge c^\top x\opt + \varrho_{M_{0}(\theta^{(k)})}(-p^\top \mathbf{y}\opt (\tilde d) ) = c^\top x\opt -p^\top \mathbf{y}\opt(k\mu) ,
\end{aligned}
\]
where the inequality follows from $M_{0}(\theta^{(k)}) \in \mathcal A(\theta^{(k)})$.
By the feasibility of $(x\opt, \mathbf{y}\opt)$, it holds that $ x\opt \in \mathcal X(b^{(k)}),~A\mathbf{y}\opt(k\mu) \le x\opt$ and $H \mathbf{y}\opt(k\mu) \le k\mu$, which means $(x\opt, \mathbf{y}\opt(k\mu)) \in \R^{\Nx} \times \R^{\Ny}$ is a feasible solution to the problem defining $V_{0}(b^{(k)},\theta^{(k)})$ and its suboptimality leads to the result that $
\mathrm{OPT}(b^{(k)},\theta^{(k)}) \ge c^\top x\opt -p^\top \mathbf{y}\opt(k\mu) \ge  V_{0}(b^{(k)},\theta^{(k)})$. 
This completes the proof. 
\end{proof}

\begin{proof}[Proof of Proposition~\ref{prop:HU}.]
For the ``if'' part, when $(v,U) \in \R_+^{\Ny} \times \R_+^{\Ny\times\Nd} $, we have $\min\{ v, Ud\} \ge 0$ for $d\ge 0$. As $A$ and $H$ are non-negative matrices, for any $d\ge 0$, it holds that
\[
\left.
\begin{array}{l}
    A\min\{v,Ud\} \le Av \le x  \\
     H\min\{v,Ud\} \le HUd \le d
\end{array} \right\} \forall d\ge 0,
\]
which implies the feasibility of $\mathbf{y}: d\mapsto \min\{ v,Ud\}$.

Next, we prove the ``only if'' part. To satisfy $\min\{v,Ud\} \in\R_+^{\Ny}$, there must be $v\in\R_+^{\Ny}$ and $U\in\R_+^{\Ny\times\Nd}$, otherwise we can find a large $d$ such that $Ud$ has negative elements. Denote by $U_i$ the $i$-th row of $U$ for $i\in[\Ny]$. If $v_i  = 0$, we have $\min\{v_i , U_i  d\} \equiv 0$, which enables us to set $U_i  = 0$. Similarly, if $U_i  = 0$, we can set $v_i =0$. 
Note that if both $v_i $ and $U_i  $ are zero, then the $i$-th element of $\mathbf{y} (d)$ does not affect the first-stage decision and the second-stage cost $-p^\top \mathbf{y} (d)$. Therefore, to simplify the statement, we further assume that both $v_i $ and $U_i  $ are nonzero for all $i\in[\Ny]$.

 The feasibility of $(x , \mathbf{y} )$ implies that 
 $
 A\mathbf{y}  (d) = A\min\{ v , U  d \} \le x .
 $
 As $U_i $ is nonzero, there exists $d\ge 0$ large enough such that $U_i  d \ge v_i ~\forall i\in[\Ny]$, which leads to the constraint $Av \le x $. 
 For the constraint $H\mathbf{y} (d)\le d$, we consider $d = e_j/t$ where $e_j \in \R_+^{\Nd}$ denotes the $j$-th column of the identity matrix $I \in \R^{\Nd\times\Nd}$. Let $t$ be large enough such that $U  e_j/t \le v $, then we have
 \[
 H\mathbf{y} \left(\frac{e_j}{t} \right) = \frac{1}{t}HU  e_j \le \frac{1}{t}e_j,
 \]
 which means the $j$-th column of $HU $ cannot exceed $e_j$, i.e., $HU  \le I$. 
 
\end{proof}

The following lemma is critical for proving Proposition~\ref{prop:loss-upper-bound}.
\begin{lemma}[Risk upper bound] \label{lemma:risk-upper}
    Let $\theta^\prime = (\mu^\prime, \sigma^\prime) \in \R_+^{2\Nd}$ be an arbitrary moment parameter. Under  Assumptions~\ref{a:risk-measure} and~\ref{a:ambiguity}, for any $p \in \R_+^{\Nd},v \in \R^{\Ny}$ and $U\in\R_+^{\Ny\times\Nd}$, we have
    \[
    \sup\limits_{\PP \in \mathcal A(\theta^\prime)} \varrho_{\PP}(p^\top \max\{0, v - U \tilde d\})\leq (\frac{1}{2}+\alpha) p^\top  U \sigma^\prime+ \sum_{i \in [\Nd]} p_i (v - U \mu^\prime)_i \mathbb{I}_{( v - U \mu^\prime)_i > 0}.
    \]
\end{lemma}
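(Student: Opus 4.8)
The plan is to fix an arbitrary $\PP \in \mathcal{A}(\theta')$, bound $\varrho_\PP\big(p^\top\max\{0,v-U\tilde d\}\big)$ by the claimed right-hand side, and then take the supremum. The first step is a purely pointwise (almost sure) inequality that separates the piecewise-linear integrand into a deterministic part and a mean-zero part. Writing $a \Let v - U\mu'$ and, for each coordinate $i$, $\xi_i \Let U_i(\tilde d - \mu')$ with $U_i$ the $i$-th row of $U$, we have $v_i - U_i\tilde d = a_i - \xi_i$, and the elementary bound $(\beta-\gamma)^+ \le \beta^+ + \gamma^-$ gives $\max\{0,v-U\tilde d\}_i \le a_i^+ + \xi_i^-$ for every $i$. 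Multiplying by $p_i \ge 0$ and summing, then applying monotonicity of $\varrho_\PP$ (Assumption~\ref{a:risk-measure}) and translation invariance to strip off the deterministic constant $\sum_i p_i a_i^+ = \sum_i p_i (v-U\mu')_i\mathbb{I}_{(v-U\mu')_i>0}$, reduces the claim to showing $\varrho_\PP\big(\textstyle\sum_i p_i\,\xi_i^-\big) \le \big(\tfrac12 + \alpha\big)\,p^\top U\sigma'$.

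For the residual term, set $S \Let \sum_i p_i\,\xi_i^-$, which lies in $L^2(\PP)$ since $\tilde d$ has finite second moment. The second step is the standard-risk-coefficient estimate $\varrho_\PP(S) \le \E_\PP[S] + \alpha\sqrt{\mathrm{Var}_\PP(S)}$: by translation invariance $\varrho_\PP(S) = \E_\PP[S] + \varrho_\PP(S-\E_\PP[S])$, and when $\mathrm{Var}_\PP(S)>0$ the normalized variable $(S-\E_\PP[S])/\sqrt{\mathrm{Var}_\PP(S)}$ has zero mean and unit variance, so positive homogeneity together with Definition~\ref{def:standard:risk:coefficient} gives $\varrho_\PP(S-\E_\PP[S]) \le \alpha\sqrt{\mathrm{Var}_\PP(S)}$ (the degenerate case is trivial since $\varrho_\PP(0)=0$). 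Here the non-negativity $\alpha\ge 0$ from Assumption~\ref{a:risk-measure} is used so that a subsequent crude upper bound on $\mathrm{Var}_\PP(S)$ preserves the inequality.

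The third step controls the mean and standard deviation of $S$ by $p^\top U\sigma'$, with factors $1/2$ and $1$ respectively. The key input is that, by the Minkowski inequality in $L^2(\PP)$, $U\ge 0$, and $\mathrm{Var}_\PP(\tilde d_j) \le \sigma_j'^2$ for distributions in $\mathcal{A}(\theta')$,
\[
\|\xi_i\|_{L^2(\PP)} = \Big\|\textstyle\sum_j U_{ij}(\tilde d_j - \mu_j')\Big\|_{L^2(\PP)} \le \textstyle\sum_j U_{ij}\sqrt{\mathrm{Var}_\PP(\tilde d_j)} \le (U\sigma')_i .
\]
Since $\E_\PP[\xi_i]=0$ gives $\E_\PP[\xi_i^-] = \tfrac12\E_\PP[|\xi_i|] \le \tfrac12\|\xi_i\|_{L^2(\PP)}$, summing yields $\E_\PP[S]\le \tfrac12 p^\top U\sigma'$; and using $(\xi_i^-)^2\le \xi_i^2$ together with a second application of Minkowski gives $\sqrt{\mathrm{Var}_\PP(S)} \le \sum_i p_i\sqrt{\mathrm{Var}_\PP(\xi_i^-)} \le \sum_i p_i\|\xi_i\|_{L^2(\PP)} \le p^\top U\sigma'$. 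Combining the three steps yields the bound for each $\PP\in\mathcal{A}(\theta')$, and taking the supremum finishes the proof.

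The main obstacle — and the reason the decomposition has to be arranged so carefully — is that $\varrho_\PP$ is not assumed convex or subadditive, so the risk of the sum $\sum_i p_i\max\{0,v_i-U_i\tilde d\}$ cannot simply be split coordinate by coordinate inside $\varrho_\PP$. The remedy is to perform all the splitting at the level of almost-sure inequalities, where monotonicity applies without any convexity, and to invoke probabilistic structure — the mean–variance control encoded in $\alpha$ — only once, on the single random object $S$, which is a nonnegative transform of a quantity that is linear in $\tilde d$ and therefore has $L^2$ norm directly governed by $\sigma'$.
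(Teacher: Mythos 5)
Your proof is correct and follows essentially the same route as the paper's: bound the mean of the positive part by $\tfrac12 p^\top U\sigma'$ plus the deterministic indicator term, bound its standard deviation by $p^\top U\sigma'$ via Minkowski and the contraction $(\cdot)^-$, and convert to a risk bound through the standard risk coefficient $\alpha$ using translation invariance and positive homogeneity. The only difference is cosmetic: where the paper invokes the Gallego--Moon inequality $\E[\max\{0,\tilde\zeta\}]\le\tfrac12\big(\sqrt{\mathrm{Var}(\tilde\zeta)+\E[\tilde\zeta]^2}+\E[\tilde\zeta]\big)$ on the non-centered coordinates and absorbs the constant into the mean $m$, you first split pointwise via $(a-\xi)^+\le a^+ + \xi^-$ and then use the zero-mean identity $\E[\xi^-]=\tfrac12\E[|\xi|]\le\tfrac12\lVert\xi\rVert_{L^2}$, which yields the same estimate.
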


\begin{proof}[Proof of Lemma~\ref{lemma:risk-upper}.]
Pick any $\PP \in \mathcal A(\theta^\prime)$, we have $\E_{\PP}[U \tilde d] = U \mu^\prime$ and $\mathrm{Var}((U \tilde d)_i) \le (  {U_i} \sigma^\prime)^2$,
where $(U \tilde d)_i$ is the $i$-th element of $U \tilde d$, and $U_i$ is the $i$-th row vector of $U$. Subsequently, we have 
$\E_{\PP}[v - U \tilde d] = v - U \mu^\prime$,
and 
$\mathrm{Var}( (v - U \tilde d)_i) \le (  {U_i} \sigma^\prime)^2$. By~\citet[Proposition~1]{ref:gallego1992minmax}, for any random variable $\tilde \zeta$, it holds that
\begin{equation*}
    \mathbb{E}[\max\{0, \tilde \zeta\}]\leq \frac{ \sqrt{\mathrm{Var}(\tilde \zeta )+ \E[\tilde \zeta ]^2} + \E[\tilde \zeta ] }{2}.
\end{equation*}
Let $\tilde \xi = \max \{0, v- U \tilde d \}$ and then $\tilde \xi _i = \max \{0, (v- U \tilde d)_i\}$. We have
\[
\begin{aligned}
    \E[\tilde \xi_i ] & \le \frac{\sqrt{(  {U_i} \sigma^\prime)^2+(v - U \mu^\prime)_i^2}+(v - U \mu^\prime)_i}{2} \\
    & \le \frac{  {U_i} \sigma^\prime +  | (v - U \mu^\prime)_i| +(v - U \mu^\prime)_i}{2} \\
    &= \frac{  {U_i}  \sigma^\prime +  2(v - U \mu^\prime)_i \mathbb{I}_{( v - U \mu^\prime)_i > 0}}{2}.
\end{aligned}
\]
The first inequality follows from~\citet[Proposition~1]{ref:gallego1992minmax} and $\mathrm{Var}((v- U \tilde d)_i) \le (  {U_i} \sigma^\prime)^2$. The second inequality follows from $U_i \sigma^\prime \ge 0$ as $U\in\R_+^{\Ny \times \Nd }$. Moreover, the variance of $\tilde \xi$ can be upper bounded by $( {U_i} \sigma^\prime)^2$ as $\mathrm{Var}(\max \{0, (v- U \tilde d)_i\}) \le \mathrm{Var}((v- U \tilde d)_i) \le (  {U_i} \sigma^\prime)^2 $. 

Consider now a vector $ p\in\R_+^{\Nd}$, let $m$ and $s^2$ be the mean and variance of $p^\top \tilde \xi$, respectively. Based on the derived upper bound for the mean and variance of $\tilde \xi $, under any $\PP \in \mathcal A(\theta^\prime)$, we have
\[
m\in\Big(0,\frac{1}{2}\sum\limits_{i\in[\Nd]} p_i ( {U_i} \sigma^\prime +2(v - U \mu^\prime )_i \mathbb{I}_{( v - U \mu^\prime)_i > 0})\Big]\quad\mathrm{and}\quad s^2\in\Big(0,( \sum\limits_{i\in[\Nd]} p_i  {U_i} \sigma^\prime )^2\Big].
\]
Next, we bound the worst-case risk $\sup_{\PP \in \mathcal A(\theta^\prime)}~\varrho_{\PP}(p^\top \tilde \xi) $ using the notion of standard risk coefficient. From the positive homogeneity and translation invariance of $\{\varrho_{\PP} \} $, it holds that
\[
\begin{aligned}
     \sup_{\PP \in \mathcal A(\theta^\prime)}~\varrho_{\PP}(p^\top \tilde \xi) & = \sup_{\PP \in \mathcal A(\theta^\prime)}~m+s\varrho_{\PP}(\frac{1}{s}(p^\top \tilde \xi -m) )  \\
      &\le \sup_{\PP \in \mathcal A(\theta^\prime)}~m+\alpha s\\
      &\le (\frac{1}{2}+\alpha )\sum\limits_{i\in[\Nd]} p_i  {U_i} \sigma^\prime+ \sum\limits_{i\in[\Nd]} p_i (v - U \mu^\prime)_i \mathbb{I}_{( v - U \mu^\prime)_i > 0} , 
    \end{aligned}
\]
where the first inequality follows because $(p^\top \tilde \xi -m)/ s$ has zero mean and unit variance and $\alpha$ captures the worst-case risk measure of standard random variable. The second inequality follows from the uniform upper bound for $(m,s^2)$ under any $\PP\in \mathcal{A} (\theta^\prime) $. 
Therefore, we have 
\[ 
     \sup_{\PP \in \mathcal A(\theta^\prime)}~\varrho_{\PP}(p^\top \max\{0, v - U \tilde d\})=\sup_{\PP \in \mathcal A(\theta^\prime)}~\varrho_{\PP}(p^\top \tilde \xi ) \le (\frac{1}{2}+\alpha) p^\top U \sigma^\prime + \sum\limits_{i\in[\Nd]} p_i (v - U \mu^\prime)_i \mathbb{I}_{( v - U \mu^\prime)_i > 0},
\]
which completes the proof. 
\end{proof}

\begin{proof}[Proof of Proposition~\ref{prop:loss-upper-bound}.]
We first rewrite $C( x_{\varsigma, \tau}^{(k)},\mathbf{y}_{\varsigma, \tau}^{(k)},b^{(k)},\theta^{(k)} )$ as
\[ 
\begin{aligned}
C( x_{\varsigma, \tau}^{(k)},\mathbf{y}_{\varsigma, \tau}^{(k)},b^{(k)},\theta^{(k)} ) &= c^\top x_{\varsigma, \tau}^{(k)} + \sup_{\PP \in \mathcal A(\theta^{(k)})}~\varrho_{\PP}(- p^\top  \min\{v_{\varsigma, \tau}^{(k)}, U_{\varsigma, \tau}^{(k)}\tilde d\}) \\
&= c^\top x_{\varsigma, \tau}^{(k)} + \sup_{\PP \in \mathcal A(\theta^{(k)})}~\varrho_{\PP}(- p^\top  v_{\varsigma, \tau}^{(k)} - p^\top  \min\{ 0, U_{\varsigma, \tau}^{(k)} \tilde d - v_{\varsigma, \tau}^{(k)}\}) \\
&= c^\top x_{\varsigma, \tau}^{(k)} - p^\top v_{\varsigma, \tau}^{(k)} + \sup_{\PP \in \mathcal A(\theta^{(k)})}~\varrho_{\PP}(- p^\top  \min\{ 0, U_{\varsigma, \tau}^{(k)} \tilde d - v_{\varsigma, \tau}^{(k)} \}) \\
&= c^\top x_{\varsigma, \tau}^{(k)} - p^\top v_{\varsigma, \tau}^{(k)} + \sup_{\PP \in \mathcal A(\theta^{(k)})}~\varrho_{\PP}( p^\top \max\{ 0, v_{\varsigma, \tau}^{(k)} - U_{\varsigma, \tau}^{(k)} \tilde d \}) ,
\end{aligned} 
\]
where the first equality follows from the definition of $C$ and the third equality follows from the translation invariance of $\{ \varrho_{\PP} \}$. Observe that $- v_{\varsigma, \tau}^{(k)} \le -\min\{ v_{\varsigma, \tau}^{(k)},U_{\varsigma, \tau}^{(k)} d \} $, we have $ \varrho_{M_{\varsigma, \tau}(\theta^{(k)})}(-p^\top\min\{v_{\varsigma, \tau}^{(k)} ,U_{\varsigma, \tau}^{(k)} \tilde d \}) \ge - p^\top v_{\varsigma, \tau}^{(k)}$ as $p$ is non-negative and $\varrho$ is monotonic. Substituting $- p^\top v_{\varsigma, \tau}^{(k)}$ with $\varrho_{M_{\varsigma, \tau}(\theta^{(k)})}(-p^\top\min\{v_{\varsigma, \tau}^{(k)} ,U_{\varsigma, \tau}^{(k)} \tilde d \})$ yields
\be\label{eq:to-be-used-1}
\begin{aligned}
& C( x_{\varsigma, \tau}^{(k)},\mathbf{y}_{\varsigma, \tau}^{(k)},b^{(k)},\theta^{(k)} ) \\
 \le &c^\top x_{\varsigma, \tau}^{(k)} + \varrho_{M_{\varsigma, \tau}(\theta^{(k)})}(-p^\top\min\{v_{\varsigma, \tau}^{(k)},U_{\varsigma, \tau}^{(k)} \tilde d \}) + \sup_{\PP \in \mathcal A(\theta^{(k)})}~\varrho_{\PP}( p^\top \max\{ 0, v_{\varsigma, \tau}^{(k)} - U_{\varsigma, \tau}^{(k)} \tilde d \})   \\
 = & \bar V_{\varsigma, \tau}(b^{(k)},\theta^{(k)}) +  \sup_{\PP \in \mathcal A(\theta^{(k)})}~\varrho_{\PP}( p^\top \max\{ 0, v_{\varsigma, \tau}^{(k)} - U_{\varsigma, \tau}^{(k)} \tilde d \}) \\
 \le & \bar V_{\varsigma, \tau}(b^{(k)},\theta^{(k)}) + (\frac{1}{2}+\alpha) k^{\frac{s}{2}} p^\top U_{\varsigma, \tau}^{(k)} \sigma + \sum\limits_{i\in[\Nd]} p_i (v_{\varsigma, \tau}^{(k)} - kU_{\varsigma, \tau}^{(k)} \mu)_i \mathbb{I}_{( v_{\varsigma, \tau}^{(k)} - k U_{\varsigma, \tau}^{(k)} \mu)_i > 0},
\end{aligned} 
\ee
where the equality follows from the definition of $\bar V_{\varsigma, \tau}(b^{(k)},\theta^{(k)})$, and the second inequality follows from Lemma~\ref{lemma:risk-upper} with $\theta^{(k)} = (k\mu, k^{\frac{s}{2}}\sigma)$. Observe the problem defining $\bar V_{\varsigma, \tau}(b^{(k)},\theta^{(k)})$, its optimal solution $(v_{\varsigma, \tau}^{(k)}, U_{\varsigma, \tau}^{(k)} )$ must satisfy $v_{\varsigma, \tau}^{(k)} \le U_{\varsigma, \tau}^{(k)} d_h$, otherwise we can set $v$ to be $ U_{\varsigma, \tau}^{(k)} d_h$ and then $( U_{\varsigma, \tau}^{(k)} d_h, U_{\varsigma, \tau}^{(k)} )$ is a feasible solution with smaller objective, violating the optimality of $(v_{\varsigma, \tau}^{(k)}, U_{\varsigma, \tau}^{(k)} )$. As a consequence, for each $i\in[\Nd]$, we have
\[
(v_{\varsigma, \tau}^{(k)} - k U_{\varsigma, \tau}^{(k)} \mu)_i \mathbb{I}_{( v_{\varsigma, \tau}^{(k)} - k U_{\varsigma, \tau}^{(k)} \mu)_i > 0} = \max\{ 0 , (v_{\varsigma, \tau}^{(k)} - k U_{\varsigma, \tau}^{(k)} \mu)_i \} \le \max\{ 0 , [ U_{\varsigma, \tau}^{(k)} (  d_h - k\mu )]_i \},
\]
where the location of $d_h$ is
\[
d_h \Let  \begin{cases} 
   k\mu & \text{if } \tau = 0 \text{ or } \varsigma=0 ,\\
   k\mu + \sqrt{(1-\tau)/\tau}k^{\frac{s}{2}}\varsigma & \text{otherwise}.
\end{cases}
\]
We can see that $U_{\varsigma, \tau}^{(k)} (  d_h - k\mu ) \in \R_+^{\Ny}$ and its non-negative part is itself. Therefore, we have
\[
(v_{\varsigma, \tau}^{(k)} - U_{\varsigma, \tau}^{(k)} \hat\mu^{(k)})_i \mathbb{I}_{( v_{\varsigma, \tau}^{(k)} - k U_{\varsigma, \tau}^{(k)} \mu  )_i > 0} \le  \begin{cases}
   0 & \text{if } \tau = 0 \text{ or } \varsigma=0  ,\\
  \sqrt{(1-\tau) / \tau} k^{\frac{s}{2}}( U_{\varsigma, \tau}^{(k)}\varsigma )_i & \text{otherwise}.
\end{cases}
\]
Replacing the last term $\sum_{i \in [\Nd]} p_i (v_{\varsigma, \tau}^{(k)} - k U_{\varsigma, \tau}^{(k)} \mu)_i \mathbb{I}_{( v_{\varsigma, \tau}^{(k)} - k U_{\varsigma, \tau}^{(k)} \mu)_i > 0}$ using the above upper bound completes the proof. 
\end{proof}

\begin{proof}[Proof of Proposition~\ref{prop:value-upper-bound}.]

We first show that the mechanism $M_{\varsigma, \tau}$ defined in~\eqref{eq:Mall} satisfies $M_{\varsigma, \tau}(\theta^{(k)}) \in \mathcal{A}(\theta^{(k)})$ for all $k\ge 1$. Under $M_{\varsigma, \tau}(\theta^{(k)})$, the random vector $\tilde d$ has mean $k\mu$ and marginal variance $k^s \varsigma^2 \le k^s \sigma^2 $ because $\varsigma \le \sigma$. Further, the support set is non-negative as the low demand realization satisfies
\[
d_l \ge k\mu - \sqrt{\frac{\tau_{\max}}{1-\tau_{\max}}} k^{\frac{s}{2}}\varsigma = k\mu - k^{\frac{s}{2}}\varsigma\sqrt{ \min_{i\in[\Nd]}  \frac{\mu_{i}^2}{ \varsigma_{i}^2 } }  \ge (k -  k^{\frac{s}{2}})\mu \ge 0,
\]
where the first inequality holds because $\sqrt{\tau/(1-\tau)}$ increases with $\tau$ and we restrict $\tau\in[0,\tau_{\max}]$, the second equality follows from the definition of $\tau_{\max}$, and the last inequality follows because $s\in[1,2)$.

We start with the proof of $\bar V_{0} (b^{(k)},\theta^{(k)}) = V_{0} (b^{(k)},\theta^{(k)})$. As $\mathbf{y}_0^{(k)}(k\mu)$ is suboptimal to problem~\eqref{eq:V-tau-k}, it holds that $\bar V_{0}(b^{(k)},\theta^{(k)}) \ge V_{0}(b^{(k)},\theta^{(k)})$. We will prove the equality by construction. Let $V_{0}(b^{(k)},\theta^{(k)})$ be attained at $(x_{0}^{(k)}, y_{0}^{(k)} )$, then we have $A y_{0}^{(k)} \le x_{0}^{(k)}$ and $H y_{0}^{(k)} \le \hat\mu^{(k)}$. Under Assumption~\ref{a:model-param}\ref{a:model-param-1}, the matrix $H$ has one positive element in each column, and then the constraint $H y_{0}^{(k)} \le \hat\mu^{(k)}$ implies
\[
\sum\limits_{k:H_{ij} > 0} H_{ij} y_{0,j}^{(k)} \le k\mu_i \quad \forall i\in[\Nd],
\]
where $y_{0,j}^{(k)}$ denotes the $j$-th element of the vector $y_{0}^{(k)}$.
Define
\[
(\bar U_{0}^{(k)})_{ji} \Let 
\begin{cases}
  y_{0,j}^{(k)} / (k\mu_i) & \text{if } H_{ij}>0,  \\
    0 &\text{otherwise},
\end{cases}
\]
then we have $H\bar U_{0}^{(k)} \le I$ and $k \bar U_{0}^{(k)} \mu = y_{0}^{(k)}$. For the threshold vector, we set it to be $\bar v_{0}^{(k)} = y_{0}^{(k)} $. Define the second-stage policy as $\bar {\mathbf{y}}^{(k)}_{0}: d \mapsto \min\{\bar v_{0}^{(k)}, \bar U_{0}^{(k)} d\}$, which is feasible problem~\eqref{eq:V-bar}. By the suboptimality, we have
\[
\bar V_{0}(b^{(k)},\theta^{(k)}) \le c^\top x_{0}^{(k)} + \varrho_{M_{0}(\theta^{(k)})}(-p^\top \min\{ \bar v_{0}^{(k)}, \bar U_{0}^{(k)}\tilde d \} ) = c^\top x_{0}^{(k)} - p^\top y_0^{(k)} = V_{0}(b^{(k)},\theta^{(k)}),
\]
which implies that $\bar V_{0}(b^{(k)} ,\theta^{(k)} ) = V_{0}(b^{(k)} ,\theta^{(k)} )$ and $\bar V_{0}(b^{(k)} ,\theta^{(k)} )$ is attained at $(\bar v_0^{(k)} , \bar U_0^{(k)})$.

For $(\varsigma, \tau)\in (0,\sigma]\times(0,\tau_{\max}] $, we introduce the following problem
 \begin{equation}\label{eq:V-hat}
 \hat V_{\varsigma, \tau} (b^{(k)},\theta^{(k)}) \Let \left\{
    \begin{array}{cl}
    \min & c^\top x + \varrho_{M_{\varsigma, \tau}(\theta^{(k)})} (- p^\top \min\{ v ,U \tilde d \}) \\
        \st & x \in \R_+^{\Ny},~v \in \R_+^{\Ny},~ U\in\R_+^{\Ny\times \Nd} \\
            & A v \leq x,~ H U\leq I.  
    \end{array} 
    \right. 
\end{equation}
In the above problem, $x$ is also a decision variable compared with the problem defining $\bar V_{\varsigma, \tau}(b^{(k)},\theta^{(k)})$ in~\eqref{eq:V-bar}. For that reason, $\hat V_{\varsigma, \tau}(b^{(k)},\theta^{(k)}) \le \bar V_{\varsigma, \tau}(b^{(k)},\theta^{(k)})$. Compared with $V_{\varsigma, \tau}(b^{(k)},\theta^{(k)})$, problem~\eqref{eq:V-hat} restricts the second-stage policy to be TLDR, which implies that $V_{\varsigma, \tau}(b^{(k)},\theta^{(k)}) \le \hat V_{\varsigma, \tau}(b^{(k)},\theta^{(k)}) $. Therefore, we have the following order:
\[
V_{\varsigma, \tau}(b^{(k)},\theta^{(k)}) \le \hat V_{\varsigma, \tau}(b^{(k)},\theta^{(k)}) \le \bar V_{\varsigma, \tau}(b^{(k)},\theta^{(k)}).
\]
We rewrite $\bar V_{\varsigma, \tau}(b^{(k)},\theta^{(k)}) - V_{0}(b^{(k)},\theta^{(k)})$ as
\be\label{eq:V-bar-V-0}
\begin{array}{rl}
    & \bar V_{\varsigma, \tau}(b^{(k)},\theta^{(k)}) - V_{0}(b^{(k)},\theta^{(k)}) \\
     = & \bar V_{\varsigma, \tau}(b^{(k)},\theta^{(k)}) - V_{\varsigma, \tau}(b^{(k)},\theta^{(k)}) + V_{\varsigma, \tau}(b^{(k)},\theta^{(k)}) - \hat V_{\varsigma, \tau}(b^{(k)},\theta^{(k)}) + \hat V_{\varsigma, \tau}(b^{(k)},\theta^{(k)}) - V_{0}(b^{(k)},\theta^{(k)}) \\ 
     \le & \underbrace{\bar V_{\varsigma, \tau}(b^{(k)},\theta^{(k)}) - V_{\varsigma, \tau}(b^{(k)},\theta^{(k)}) }_{\mathrm{(I)} } + \underbrace{\hat V_{\varsigma, \tau}(b^{(k)},\theta^{(k)}) - V_{0}(b^{(k)},\theta^{(k)})}_{\mathrm{(II)}},
\end{array}
\ee
where the inequality follows from $V_{\varsigma, \tau}(b^{(k)},\theta^{(k)}) \le \hat V_{\varsigma, \tau}(b^{(k)},\theta^{(k)})$. 

We first upper bound the term $\mathrm{(I)}$. To this end, we construct a feasible TLDR for the problem defining $\bar V_{\varsigma, \tau}(b^{(k)},\theta^{(k)})$ so that the generated cost can bound $\bar V_{\varsigma, \tau}(b^{(k)},\theta^{(k)})$ from above. Under the first-stage decision $x_{\varsigma, \tau}^{(k)}$, let $g(x_{\varsigma, \tau}^{(k)}, \cdot)$ be attained at $\bar{\mathbf{y}}_{\varsigma, \tau}^{(k)}(\cdot)$, then we have $V_{\varsigma, \tau}(b^{(k)},\theta^{(k)}) = c^\top x_{\varsigma, \tau}^{(k)} + \varrho_{M_{\varsigma,\tau}} (-p^\top \bar{\mathbf{y}}^{(k)}_{\varsigma, \tau}(\tilde d)) $ by~\citet[Proposition~2.1]{ref:shapiro2017interchangeability}. As shown in the case of $M_0$, we can find $(\bar v_{\varsigma, \tau}^{(k)},\bar U_{\varsigma, \tau}^{(k)})$ such that $\bar v_{\varsigma, \tau}^{(k)} = \bar U_{\varsigma, \tau}^{(k)} d_h = \bar{\mathbf{y}}_{\varsigma, \tau}^{(k)}(d_h)$. The feasibility and suboptimality of the TLDR: $d\mapsto\min\{ \bar v_{\varsigma, \tau}^{(k)}, \bar U_{\varsigma, \tau}^{(k)} d \}$ yields
\[
\bar V_{\varsigma, \tau}(b^{(k)},\theta^{(k)}) \le c^\top x_{\varsigma, \tau}^{(k)} + \varrho_{M_{\varsigma, \tau}(\theta^{(k)})} ( -p^\top \min\{ \bar v_{\varsigma, \tau}^{(k)}, \bar U_{\varsigma, \tau}^{(k)} \tilde d \}),
\]
and then
\[
\bar V_{\varsigma, \tau}(b^{(k)},\theta^{(k)}) - V_{\varsigma, \tau} (b^{(k)},\theta^{(k)}) \le \varrho_{M_{\varsigma, \tau}(\theta^{(k)})} ( -p^\top \min\{ \bar v_{\varsigma, \tau}^{(k)}, \bar U_{\varsigma, \tau} ^{(k)} \tilde d \}) - \varrho_{M_{\varsigma, \tau}(\theta^{(k)})}(-p^\top \bar{\mathbf{y}}_{\varsigma, \tau}^{(k)}(\tilde d)),
\]
which follows from $\varrho_{M_{\varsigma, \tau}(\theta^{(k)})}(g(x_{\varsigma, \tau}^{(k)},\tilde d)) = \varrho_{M_{\varsigma, \tau}(\theta^{(k)})}(-p^\top \bar{\mathbf{y}}_{\varsigma, \tau}^{(k)}(\tilde d))$. 

As $d_l \le d_h$, one can verify that $-p^\top \bar{\mathbf{y}}_{\varsigma, \tau}^{(k)} (d_h) \le -p^\top \bar{\mathbf{y}}_{\varsigma, \tau}^{(k)}(d_l)$ and it holds that $\bar U_{\varsigma, \tau}^{(k)} d_l \le \bar U_{\varsigma, \tau}^{(k)} d_h$ because $\bar U_{\varsigma, \tau}^{(k)}$ is non-negative. By the monotonicity of $\varrho$, we have
\[
\varrho_{M_{\varsigma, \tau}(\theta^{(k)})}(-p^\top \bar{\mathbf{y}}_{\varsigma, \tau}^{(k)} (\tilde d)) \ge -p^\top \bar{\mathbf{y}}_{\varsigma, \tau}^{(k)} (d_h),
\]
and we also have
\[
\varrho_{M_{\varsigma, \tau}(\theta^{(k)})} ( -p^\top \min\{ \bar v_{\varsigma, \tau}^{(k)}, \bar U_{\varsigma, \tau}^{(k)} \tilde d \}) \le -p^\top \min\{ \bar v_{\varsigma, \tau}^{(k)}, \bar U_{\varsigma, \tau}^{(k)} d_l \} .
\]
Therefore, it holds that
\be \label{eq:term1}
\begin{aligned}
    \bar V_{\varsigma, \tau}(b^{(k)},\theta^{(k)}) - V_{\varsigma, \tau} (b^{(k)},\theta^{(k)}) & \le -p^\top \min\{ \bar v_{\varsigma, \tau}^{(k)}, \bar U_{\varsigma, \tau}^{(k)} d_l \}  +  p^\top \bar{\mathbf{y}}_{\varsigma, \tau}^{(k)}(d_h)\\
    & = p^\top \bar U_{\varsigma, \tau}^{(k)} (d_h - d_l) \\
    & = \left( \sqrt{\frac{1-\tau}{\tau}} + \sqrt{\frac{\tau}{1-\tau}}\right) k^{\frac{s}{2}} p^\top\bar U_{\varsigma, \tau}^{(k)} \varsigma,
\end{aligned}
\ee
where the first equality comes from $\bar U_{\varsigma, \tau}^{(k)} d_l \le \bar U_{\varsigma, \tau}^{(k)} d_h = \bar{\mathbf{y}}_{\varsigma, \tau}^{(k)}(d_h) = \bar v_{\varsigma, \tau}^{(k)}$.

Next, we find an upper bound for the term $\mathrm{(II)}$ via $(v_0^{(k)}, U_0^{(k)})$, at which $\bar V_0(b^{(k)},\theta^{(k)})$ is attained. The optimal TLDR under $x_0^{(k)}$ is $\mathbf{y}_0^{(k)}: d \mapsto \min\{ v_{0}^{(k)}, U_{0}^{(k)} d\}$. Observe that 
\[ 
\begin{aligned}
    C(x_{0}^{(k)}, \mathbf{y}_{0}^{(k)},
b^{(k)},\theta^{(k)} ) & = c^\top x_{0}^{(k)} + \sup_{\PP \in \mathcal A(\theta^{(k)})}~\varrho_{\PP}(- p^\top  \min\{v_{0}^{(k)}, U_{0}^{(k)} \tilde d\}) \\ 
   & \ge c^\top x_{0}^{(k)} + \varrho_{M_{\varsigma, \tau}(\theta^{(k)})}(- p^\top  \min\{v_{0}^{(k)}, U_{0}^{(k)} \tilde d\})  \ge \hat V_{\varsigma, \tau} (b^{(k)},\theta^{(k)}) ,
\end{aligned} 
\]
where the first inequality follows because $M_{\varsigma, \tau}(\theta^{(k)}) \in \mathcal A(\theta^{(k)})$ and the second inequality holds because $(x_{0}^{(k)},v_{0}^{(k)},U_{0}^{(k)})$ is feasible for the minimization problem defining $\hat V_{\varsigma, \tau}(b^{(k)},\theta^{(k)})$. Subsequently, by Proposition~\ref{prop:loss-upper-bound}, it holds that
\be\label{eq:term2}
\begin{aligned}
 \hat V_{\varsigma, \tau}(b^{(k)},\theta^{(k)}) -V_{0}(b^{(k)},\theta^{(k)})  & \le  C(x_{0}^{(k)}, \mathbf{y}_{0}^{(k)}, b^{(k)},\theta^{(k)} ) -    V_{0}(b^{(k)},\theta^{(k)}) \\
 & \le (\frac{1}{2}+\alpha ) k^{\frac{s}{2}} p^\top U_{0}^{(k)} \sigma.
\end{aligned}
\ee

Applying inequalities~\eqref{eq:term1} and~\eqref{eq:term2} to~\eqref{eq:V-bar-V-0} completes the proof.  
\end{proof}

\begin{proof}[Proof of Lemma~\ref{lemma:risk-pos-homo}.]
The base vectors are defined to be $b^{(1)} = b\in\R_+^K$ and $\theta^{(1)} = (\mu, \sigma) \in\R_+^{2\Nd}$.
Let $V_{0}(b^{(1)},\theta^{(1)})$ be attained at $(x_{0}^{(1)}, y_{0}^{(1)} ) \in \R_+^{\Nx}\times \R_+^{\Ny}$. It can be verified that $(k x_{0}^{(1)}, k y_{0}^{(1)} )$ is a feasible solution to the optimization problem defining $V_{0}(b^{(k)},\theta^{(k)})$. Indeed, because $(x_{0}^{(1)} , y_{0}^{(1)}  )$ is a feasible solution to the problem defining $V_{0}(b^{(1)},\theta^{(1)})$, we have
\[
x_{0}^{(1)} \in \mathcal X(b),~ A y_{0}^{(1)}  \le x_{0}^{(1)} ,~ H y_{0}^{(1)} \le \mu,
\]
and then $(k x_{0}^{(1)} , k y_{0}^{(1)}  )$ satisfies
\[
k x_{0}^{(1)}  \in \mathcal X(b^{(k)}),~ A (k y_{0}^{(1)} ) \le k x_{0}^{(1)} , ~ H ( k y_{0}^{(1)} ) \le k \mu.
\]
Hence, $(k x_{0}^{(1)} , k y_{0}^{(1)} )$ is feasible for the optimization problem determining $V_{0}(b^{(k)},\theta^{(k)})$, and it follows from the suboptimality that
\[
V_{0}(b^{(k)},\theta^{(k)}) \le kc^\top x_{0}^{(1)}  -  k p^\top y_{0}^{(1)}  = k \big( c^\top x_{0}^{(1)} -  p^\top y_{0}^{(1)} \big) = k V_{0}(b^{(1)},\theta^{(1)}) .
\]
Applying an analogous argument, suppose that $(x_{0}^{(k)} , y_{0}^{(k)} )$ is a minimizer to the problem defining $V_{0}(b^{(k)},\theta^{(k)})$, then $(  x_{0}^{(k)} / k,  y_{0}^{(k)} / k )$ is a feasible solution to the problem defining $V_{0}(b^{(1)},\theta^{(1)})$. It follows that
$ V_{0}(b^{(k)},\theta^{(k)}) \ge k V_{0}(b^{(1)},\theta^{(1)})$, which completes the proof together with $V_{0}(b^{(k)},\theta^{(k)}) \le k V_{0}(b^{(1)},\theta^{(1)})$.  
\end{proof}

\begin{proof}[Proof of Proposition~\ref{prop:wc-dual}.]
For any given first-stage decision $x\in\R_+^{\Nx}$ and underlying distribution $\PP$ of $\tilde d$, the second-stage approximation problem under TLDR becomes
\begin{equation}\label{eq:tldr-approx}
  (v\opt,U\opt) \in \left\{
    \begin{array}{cl}
    \arg\min & \varrho_{\PP}(- p^\top \min\{ v ,U \tilde d\} )\\
        \st & v \in \R_+^{\Ny},~ U\in\R_+^{\Ny\times \Nd} \\
            &  A v \leq x,~ H U\leq I,  
    \end{array} 
    \right.
\end{equation} 
where the constraints are sufficient and necessary to ensure the feasibility of induced TLDR: $d \mapsto \min\{ v ,U d\}$ as stated in Proposition~\ref{prop:HU}.
We show that $U=I$ is an optimal choice for $U$.
Let $(v\opt ,U\opt)$ be the optimal solution of problem~\eqref{eq:V-bar}, we have $-p^\top \min \{ v\opt,U\opt  d \} \ge -p^\top \min \{ v\opt, d \}$ as $U\opt \le I$ and $\{ \varrho_{\PP}\}$ are monotonic risk measures. Hence, $(v\opt, I)$ is also optimal to problem~\eqref{eq:tldr-approx} because it satisfies all the constraints and achieves the optimal cost. This enables us to focus on the optimal truncated linear decision rule in the form of $\mathbf{y}:d \mapsto \min\{v,d\}$ with $v$ to be optimized.

Under such circumstances, we can reduce the worst-case expected loss in the computation of WCR to a second-order cone program as follows.
By duality, we have
\[
\begin{aligned}
\sup\limits_{\PP\in\mathcal A(\theta)}   \E_{\PP}\left[-p^{\top} \min  \{ v , \tilde d \}\right]
=& \left\{\begin{array}{cl}
   \sup\limits_{\PP\in\mathcal M}   & \E_{\PP} \left[-p^{\top} \min \{ v , \tilde d \} \right] \\
    \st & \E_{\PP}[\tilde d]=\mu \\ 
        & \E_{\PP}\left[\tilde d_i^2\right] \leq \mu_i^2+\sigma_i^2~\forall i\in[N] \\
        & \E_{\PP}[\mathbf{1}_{\R_+^{N}}(\tilde d)]=1 
\end{array}\right. \\
=& \left\{
\begin{array}{cl}
   \min  & \lambda^{\top} \mu+\sum\limits_{i\in[N]} s_i\left(\mu_i^2+\sigma_i^2\right)+ \sum\limits_{i\in[N]} r_i \\
   \st   & \lambda\in\R^N,~s\in\R_+^N,~r\in\R^N \\
   &  \lambda_i d_i+\sum s_i d_i^2+r_i \ge -p_i \min  \{ v_i , d_i \} \quad \forall d_i \in\R_+, i\in[N] 
\end{array} \right. \\
= & \left\{
\begin{array}{cl}
   \min  & \lambda^{\top} \mu+\sum\limits_{i\in[N]} s_i\left(\mu_i^2+\sigma_i^2\right)+ \sum\limits_{i\in[N]} r_i  \\
    \st   & \lambda\in\R^N,~s\in\R_+^N,~r\in\R^N   \\
    & \lambda_i d_i+\sum s_i d_i^2+r_i \ge -p_i v_i \quad \forall d_i \in\R_+ ,i\in[N] \\
        & \lambda_i d_i+\sum s_i d_i^2+r_i \ge -p_i d_i  \quad \forall d_i \in\R_+,i\in[N] ,
\end{array}\right.
\end{aligned}.
\]
where $d_i$ is the $i$-th element of $d$. The second equality follows from the strong duality~\cite[Proposition 3.4]{ref:shapiro2001duality}. The third equality holds because $-p_i \min  \{ v_i , d_i \} $ is no less than both $ -p_i v_i$ and $- p_i d_i$. Next, we transfer the semi-infinite constraints to finite constraints. Take the constraint $\lambda_i d_i + s_i d_i^2+r_i \ge - p_i v_i$ for all $d_i \in\R_+$ for example, by Farkas lemma~\cite[Theorem 2.1]{ref:polik2007survey}, it is equivalent to the statement that
\begin{equation}\label{eq:infinite-inequality}
\exists~q_{1,i} \in \R_+ \quad \st \quad  \lambda_i d_i + s_i d_i^2+r_i +p_i v_i  - q_{1,i} d_i \ge 0~ \forall d_i \in \R.
\end{equation}
Note that $s\in \R_+^N$, and when $s_i > 0$, the left-hand side of the inequality in~\eqref{eq:infinite-inequality} is a quadratic function with a positive leading coefficient. The quadratic function attains its minimum at $d_i = (q_{1,i} - \lambda _i ) / 2s_i$ with minimal value $ r_i + p_i v_i -  (q_{1,i} - \lambda_i )^2 / 4s_i $. Hence, we have
\[
\begin{aligned}
    & \lambda_i d_i + s_i d_i^2+r_i +p_i v_i  - q_{1,i} d_i \ge 0~ \forall d_i \in \R \\
    \Leftrightarrow ~& r_i + p_i v_i -  \frac{ (q_{1,i} - \lambda_i )^2 }  { 4s_i } \ge 0 \\
    \Leftrightarrow ~& (q_{1,i} - \lambda_i )^2 \le 4s_i (r_i + p_i v_i ) = ( s_i + r_i + p_i v_i)^2 - (s_i - r_i - p_i v_i)^2 \\
    \Leftrightarrow ~& \left\lVert \begin{array}{c}
    q_{1,i} - \lambda_i \\
    s_i - r_i - p_i v_i
\end{array} \right\rVert \le s_i + r_i + p_i v_i.
\end{aligned}
\]
That means the statement~\eqref{eq:infinite-inequality} is equivalent to the constraints 
\begin{equation}\label{eq:soc-constraint}
   q_{1,i} \ge 0 \quad \text{and} \quad (s_i + r_i + p_i v_i ,  q_{1,i} - \lambda_i , s_i - r_i - p_i v_i ) \in \mathcal Q^3
\end{equation}
when $s_i > 0$. As for the case when $s_i = 0$, both the statement~\eqref{eq:infinite-inequality} and the constraint~\eqref{eq:soc-constraint} require $q_{1,i}  - \lambda_i = 0 $ and $r_i + p_i v_i \ge 0$, which completes the equivalence between the statement~\eqref{eq:infinite-inequality} and the constraint~\eqref{eq:soc-constraint} for any $s_i \ge 0$.

Following the same step, we can also transfer the semi-infinite constraint 
\[
\lambda_i d_i + s_i d_i^2 + r_i \ge - p_i d_i~\forall d_i \in\R_+
\]
to the constraints
\[
q_{2,i} \ge 0 \quad \text{and} \quad (s_i + r_i , ~ q_{2,i} - \lambda_i - p_i ,~ s_i - r_i ) \in \mathcal Q^3.
\] 
Consequentially, the worst-case expected loss $\sup_{\PP\in\mathcal A(\theta)}   \E_{\PP}[-p^{\top} \min  \{ v , \tilde d \}]$ is equivalent to the following second-order cone programming problem
\[
         \begin{array}{cll}
          \min  & \lambda^{\top} \mu+\sum\limits_{i\in[N]} s_i\left(\mu_i^2+\sigma_i^2\right)+ \sum\limits_{i\in[N]} r_i  \\
            \st & \lambda \in \R^N,~s \in \R_+^N,~r\in\R^N,~q_1 \in \R_+^N,~q_2\in\R_+^{N} \\
                & (s_i + r_i + p_i v_i , ~ q_{1,i} - \lambda_i ,~ s_i - r_i - p_i v_i ) \in \mathcal Q^3~\forall i \in [N] \\
                & (s_i + r_i , ~ q_{2,i} - \lambda_i - p_i ,~ s_i - r_i ) \in \mathcal Q^3 ~\forall i \in [N] .
        \end{array}
\]
This completes the proof.  
\end{proof}

\subsection{Proofs for Wasserstein Ambiguity Set}\label{appendix:wass}

We follow a similar strategy as in Section~\ref{sec:moment}: we aim to establish a lower bound for $ \mathrm{OPT}(b^{(k)},\theta^{(k)})$ and an upper bound for $\mathrm{Obj}(x_{\varsigma,\tau}^{(k)}, b^{(k)},\theta^{(k)})$, respectively. However, the Wasserstein ambiguity sets introduce new challenges necessitating modification to the analysis. First, we cannot directly infer that $ V_{0}(b^{(k)},\theta^{(k)}) \le \mathrm{OPT}(b^{(k)},\theta^{(k)})$ as the expectation mechanism $M_{0}(\theta^{(k)}) \Let \delta_{\hat\mu^{(k)}}$ may not belong to $\mathcal A(\theta^{(k)})$. Second, while the distributions within $\mathcal{A}(\theta^{(k)})$ do not have a fixed mean, their means are confined to a ball centered at $k\hat\mu$ with radius $\varepsilon^{(k)} = k^{\frac{s}{2}} \mu$ as demonstrated in Proposition~\ref{prop:outer-approx}.

\subsubsection{A Lower Bound.}

To address the first issue, we rely on the condition that $\varrho_{\PP}(\cdot)\ge\E_{\PP}(\cdot)$ for all $ \PP\in\mathcal{M}_2$. Under this condition, $V_{0}(b^{(k)},\theta^{(k)})$ remains to be a lower bound for $\mathrm{OPT}(b^{(k)},\theta^{(k)})$ as stated in Proposition~\ref{prop:lower-wass}.
\begin{proposition}[Lower bound - Wasserstein DRO]\label{prop:lower-wass}
   Suppose that Assumption~\ref{a:model-param}\ref{a:model-param-1} and Assumption~\ref{a:ambiguity-wass} hold, that $\varrho_{\PP}(\cdot)\ge\E_{\PP}(\cdot)$, and that the scaling scheme~\eqref{eq:wass-scaling} is in force. We have $
     V_{0}(b^{(k)},\theta^{(k)}) \le \mathrm{OPT}(b^{(k)},\theta^{(k)})$.
\end{proposition}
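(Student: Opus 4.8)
The plan is to follow the template of the proof of Proposition~\ref{prop:lower}, but to replace the step there that exploited $M_0(\theta^{(k)}) = \delta_{k\mu}\in\mathcal A(\theta^{(k)})$ --- which is no longer available, since the Wasserstein set need not contain the Dirac at its (scaled) mean --- by an argument that combines the standing hypothesis $\varrho_{\PP}(\cdot)\ge\E_{\PP}[\cdot]$ with the convexity of $g(x,\cdot)$. The first ingredient I would record is that, for every fixed $x\in\R_+^{\Nx}$, the second-stage cost $d\mapsto g(x,d)$ is convex on $\R_+^{\Nd}$: in the linear program~\eqref{eq:g} the parameter $d$ enters only through the right-hand side $Hy\le d$, so a convex combination of optimal recourse actions at $d_1$ and $d_2$ is feasible at $\lambda d_1+(1-\lambda)d_2$ and witnesses $g(x,\lambda d_1+(1-\lambda)d_2)\le\lambda g(x,d_1)+(1-\lambda)g(x,d_2)$. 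Equivalently, by LP duality $g(x,\cdot)$ is a maximum of finitely many affine functions of $d$; in particular it admits an affine lower bound, so $\E_{\PP}[g(x,\tilde d)]\in(-\infty,0]$ is well defined for every $\PP\in\mathcal M_2$ (using $g(x,\cdot)\le g(x,0)\le 0$ from $y=0$, and finiteness of the first moment of $\PP$).

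Next I would pick a convenient element of the ambiguity set, namely the scaled nominal distribution $\hat\PP^{(k)}$ itself: it has non-negative support and satisfies $\Wass(\hat\PP^{(k)},\hat\PP^{(k)})=0\le\varepsilon^{(k)}$, hence $\hat\PP^{(k)}\in\mathcal A(\theta^{(k)})$, and by the scaling scheme~\eqref{eq:wass-scaling} it has mean $\E_{\hat\PP^{(k)}}[\tilde d]=k\hat\mu$. Let $(x^\star,\mathbf{y}^\star)$ be an optimal solution of problem~\eqref{eq:2stage-DRO} with $g(x^\star,d)$ attained at $\mathbf{y}^\star(d)$ for every $d$. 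I would then chain
\[
\sup_{\PP\in\mathcal A(\theta^{(k)})}\varrho_{\PP}\big(g(x^\star,\tilde d)\big)\;\ge\;\varrho_{\hat\PP^{(k)}}\big(g(x^\star,\tilde d)\big)\;\ge\;\E_{\hat\PP^{(k)}}\!\big[g(x^\star,\tilde d)\big]\;\ge\;g\big(x^\star,\E_{\hat\PP^{(k)}}[\tilde d]\big)\;=\;g(x^\star,k\hat\mu),
\]
where the first inequality uses $\hat\PP^{(k)}\in\mathcal A(\theta^{(k)})$, the second is the standing hypothesis $\varrho_{\PP}(\cdot)\ge\E_{\PP}[\cdot]$, and the third is Jensen's inequality for the convex map $g(x^\star,\cdot)$.

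It then remains to close the argument exactly as in Proposition~\ref{prop:lower}: with $\bar y\Let\mathbf{y}^\star(k\hat\mu)$ attaining $g(x^\star,k\hat\mu)$, the pair $(x^\star,\bar y)$ is feasible for the linear program defining $V_0(b^{(k)},\theta^{(k)})$ --- indeed $x^\star\in\mathcal X(b^{(k)})$, $\bar y\ge 0$, $A\bar y\le x^\star$ and $H\bar y\le k\hat\mu$ --- so its suboptimality gives $V_0(b^{(k)},\theta^{(k)})\le c^\top x^\star-p^\top\bar y=c^\top x^\star+g(x^\star,k\hat\mu)$. Combining with the displayed chain yields $\mathrm{OPT}(b^{(k)},\theta^{(k)})=c^\top x^\star+\sup_{\PP}\varrho_{\PP}(g(x^\star,\tilde d))\ge c^\top x^\star+g(x^\star,k\hat\mu)\ge V_0(b^{(k)},\theta^{(k)})$, which is the claim. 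The only genuinely new element beyond Proposition~\ref{prop:lower} is the two-step lower bound ``worst-case risk $\ge$ mean $\ge$ value at the mean,'' and the only point that needs minor care is the legitimacy of Jensen's inequality, i.e.\ that $\E_{\hat\PP^{(k)}}[g(x^\star,\tilde d)]$ is finite rather than $-\infty$; this is ensured because $\hat\PP^{(k)}$ has a finite first moment and $g(x^\star,\cdot)$ is bounded below by an affine function of $d$.
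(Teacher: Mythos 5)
Your proposal is correct and follows essentially the same route as the paper's proof: pick the nominal distribution $\hat\PP^{(k)}\in\mathcal A(\theta^{(k)})$, lower-bound the worst-case risk by $\varrho_{\hat\PP^{(k)}}\ge\E_{\hat\PP^{(k)}}$, apply Jensen's inequality to the convex map $g(x,\cdot)$ to reach $g(x,k\hat\mu)$, and identify the resulting problem with $V_0(b^{(k)},\theta^{(k)})$. The only cosmetic difference is that you fix the optimizer $x^\star$ and argue feasibility in the LP defining $V_0$ (as in Proposition~\ref{prop:lower}) while the paper keeps the minimum over $x$ throughout, and you add a (welcome but not strictly needed) remark on the finiteness of $\E_{\hat\PP^{(k)}}[g(x^\star,\tilde d)]$ justifying Jensen's inequality.
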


\begin{proof}[Proof of Proposition~\ref{prop:lower-wass}.]
 As $\PP^{(k)}\in \mathcal A(\theta^{(k)})$, we have
    \[
    \begin{aligned}
       \mathrm{OPT}(b^{(k)},\theta^{(k)}) & = 
          \min\limits_{x\in \mathcal X (b^{(k)})} \left\{ c^\top x+ \sup\limits_{\PP \in \mathcal A(\theta^{(k)})} \varrho_{\PP}( g(x,\tilde d) ) \right\}\\  
          & \ge 
          \min\limits_{x\in \mathcal X (b^{(k)})} \left\{ c^\top x+ \varrho_{\PP^{(k)}}( g(x,\tilde d) ) \right\} \\
        & \ge 
          \min\limits_{x\in \mathcal X (b^{(k)})} \left\{c^\top x+ \E_{\PP^{(k)}}( g(x,\tilde d) ) \right\},
    \end{aligned}
    \]
where the first inequality follows from $\PP^{(k)}\in \mathcal{A}(\theta^{(k)} )$ and the second inequality follows from $\varrho_{\PP}(\cdot) \ge \E_{\PP}(\cdot)$ for all $\PP$. Observe that $g(x,(1-t) d_{0} + t d_1) \le (1-t)g(x, d_{0}) + t g(x, d_1)$ for all $d_{0},d_1\ge 0$ and $t\in[0,1]$, which means $g(x,\cdot)$ is a convex function. By Jensen's inequality, we have $ \E_{\PP^{(k)}} [ g(x,\tilde d) ] \ge g(x,\E_{\PP^{(k)}}[\tilde d]) = g(x,k\hat \mu ) $ and then
\[
       \mathrm{OPT}(b^{(k)},\theta^{(k)}) \ge \min\limits_{x \in \mathcal X (b^{(k)})}  c^\top x + g(x,k\hat\mu )  = V_{0}(b^{(k)},\theta^{(k)}),
\]
which completes the proof. 
\end{proof} 

\subsubsection{An Upper Bound.}

Proposition~\ref{prop:outer-approx} shows that $\mathcal{A}(\theta^{(k)})$ can be outer-approximated by an ambiguity set with bounded mean and variance. We now prove this result.

\begin{proof}[Proof of Proposition~\ref{prop:outer-approx}.]
By~\citet[Theorem~2]{ref:nguyen2021mean}, we have 
\[
\mathcal A(\theta^{(k)}) \subseteq \mathcal G_{\varepsilon^{(k)}}(\hat\mu^{(k)}, \hat\Sigma^{(k)} ) \triangleq  \left\{\PP : 
    \begin{array}{l}
       \E_{\PP}[\tilde d] = \mu,~
       \E_{\PP}[(\tilde d-\E_{\PP}[\tilde d])(\tilde d-\E_{\PP}[\tilde d])^\top ] = \Sigma  \\ [1ex]
       \sqrt{\lVert \mu - \hat\mu^{(k)} \rVert^2 + \mathrm{Tr}[\Sigma + \hat\Sigma^{(k)} - 2(\Sigma^{\frac{1}{2}} \hat\Sigma^{(k)} \Sigma^{\frac{1}{2}} )^{\frac{1}{2}}]} \le \varepsilon^{(k)}
    \end{array}\right\}.
\]
Next, we show that $\mathcal G_{\varepsilon^{(k)}}(\hat\mu^{(k)},\hat\Sigma^{(k)}) \subseteq \mathcal R(k\hat\mu,k^{\frac{s}{2}}\hat\sigma,k^{\frac{s}{2}}\varepsilon )$. 
Because $\mathrm{Tr}[\Sigma + \hat\Sigma^{(k)} - 2(\Sigma^{\frac{1}{2}} \hat\Sigma^{(k)} \Sigma^{\frac{1}{2}} )^{\frac{1}{2}}]$ is non-negative, any $(\mu, \Sigma)$ satisfying
\[
\sqrt{\lVert \mu - \hat\mu^{(k)} \rVert^2 + \mathrm{Tr}[\Sigma + \hat\Sigma^{(k)} - 2(\Sigma^{\frac{1}{2}} \hat\Sigma^{(k)} \Sigma^{\frac{1}{2}} )^{\frac{1}{2}}] } \le \varepsilon ^{(k)}
\]
also satisfy jointly
\[
          \lVert \mu - \hat\mu^{(k)} \rVert \le \varepsilon^{(k)} \quad \text{and} \quad 
       \sqrt{ \mathrm{Tr}[\Sigma + \hat\Sigma^{(k)} - 2(\Sigma^{\frac{1}{2}} \hat\Sigma^{(k)} \Sigma^{\frac{1}{2}} )^{\frac{1}{2}}] } \le \varepsilon^{(k)}.
\]
This implies that
\[
    \mathcal G_{\varepsilon^{(k)}}(\hat\mu^{(k)},\hat\Sigma^{(k)}) \subseteq \left\{\PP : 
    \begin{array}{l}
       \E_{\PP}[\tilde d] = \mu,~
       \E_{\PP}[(\tilde d-\E_{\PP}[\tilde d])(\tilde d-\E_{\PP}[\tilde d])^\top ] = \Sigma  \\ 
       \lVert \mu - \hat\mu^{(k)} \rVert \le \varepsilon^{(k)} \\
       \sqrt{ \mathrm{Tr}[\Sigma + \hat\Sigma^{(k)} - 2(\Sigma^{\frac{1}{2}} \hat\Sigma^{(k)} \Sigma^{\frac{1}{2}} )^{\frac{1}{2}}] } \le \varepsilon^{(k)}
    \end{array}\right\} .
\]
We further find a lower bound for $\mathrm{Tr}[\Sigma + \hat\Sigma^{(k)} - 2(\Sigma^{\frac{1}{2}} \hat\Sigma^{(k)} \Sigma^{\frac{1}{2}} )^{\frac{1}{2}}] $ to enlarge the set.
By~\citet[Proposition~2]{ref:malago2018wasserstein}, we have
\begin{align*}
\mathrm{Tr}[\Sigma + \hat\Sigma^{(k)} - 2(\Sigma^{\frac{1}{2}} \hat\Sigma^{(k)} \Sigma^{\frac{1}{2}} )^{\frac{1}{2}}] &= \left\{
\begin{array}{cl}
    \min & \mathrm{Tr}[\Sigma + \hat\Sigma^{(k)} - 2 C ] \\
    \st & C\in\R^{\Nd\times\Nd} \\
        & \begin{bmatrix}
             \Sigma & C  \\
             C^\top & \hat\Sigma^{(k)} 
         \end{bmatrix} \succeq 0,
\end{array} \right. \\
&=
\left\{
\begin{array}{cl}
    \max & \mathrm{Tr}[(I-R_1)^\top\Sigma + (I-R_2)^\top \hat\Sigma^{(k)} ] \\
    \st & R_1,R_2 \in\R^{\Nd\times\Nd} \\
        & \begin{bmatrix}
             R_1 & -I  \\
             -I & R_2 
         \end{bmatrix} \succeq 0,
\end{array} \right. 
\end{align*}
where the second equality comes from strong duality. 
Observe that $(R_1, R_2) = (\frac{1}{2}I,2I)$ is a feasible solution to the maximization problem on the right-hand side; thus, it holds that 
\[
\mathrm{Tr}[\Sigma + \hat\Sigma^{(k)} - 2(\Sigma^{\frac{1}{2}} \hat\Sigma^{(k)} \Sigma^{\frac{1}{2}} )^{\frac{1}{2}}] \ge \frac{1}{2}\mathrm{Tr}(\Sigma) - \mathrm{Tr}( \hat\Sigma^{(k)}).
\] 
Hence, we have
\[
\begin{aligned}
  \sqrt{\mathrm{Tr}[\Sigma + \hat\Sigma^{(k)} - 2(\Sigma^{\frac{1}{2}} \hat\Sigma^{(k)} \Sigma^{\frac{1}{2}} )^{\frac{1}{2}}]} \le  \varepsilon^{(k)} & \Rightarrow \sqrt{ \frac{1}{2}\mathrm{Tr}(\Sigma) - \mathrm{Tr}(\hat\Sigma^{(k)}) }\le \varepsilon^{(k)}  \\
  &\Leftrightarrow \mathrm{Tr}(\Sigma) \le 2\mathrm{Tr}( \hat\Sigma^{(k)}) + 2k^s\varepsilon^2,
\end{aligned}
\]
where we have plugged in $\varepsilon^{(k)} = k^{\frac{s}{2}}\varepsilon$.
As all the diagonal elements of $\Sigma \succeq 0$ are non-negative, it holds that
\[
\mathrm{Tr}(\Sigma) \le 2\mathrm{Tr}( \hat\Sigma^{(k)}) + 2k^s\varepsilon^2 \Rightarrow \Sigma_{ii} \le 2\mathrm{Tr}( \hat\Sigma^{(k)}) + 2 k^s \varepsilon^2 \le  k^s(2\mathrm{Tr}( \hat\Sigma) + 2\varepsilon^2)~\forall i\in[\Nd],
\]
where $\Sigma_{ii} = \E_{\PP}[(\tilde d_i - \E_{\PP}[\tilde d_i] )^2]$ and the last second inequality in the right follows from $\hat\Sigma^{(k)} \preceq k^s\hat\Sigma$. 
Therefore, $\mathcal G_{\varepsilon^{(k)}}(\hat\mu^{(k)},\hat\Sigma^{(k)})$ can be outer approximated as
\be \label{eq:outer-G}
\mathcal G_{\varepsilon^{(k)}}(\hat\mu^{(k)},\hat\Sigma^{(k)}) \subseteq \left\{\PP : 
    \begin{array}{l}
       \E_{\PP}[\tilde d] = \mu    \\
       \E_{\PP}[(\tilde d-\mu)(\tilde d - \mu)^\top ] = \Sigma  \\ 
       \lVert \mu - \hat\mu^{(k)} \rVert \le \varepsilon^{(k)} \\
      \Sigma_{ii} \le k^{s}(2\mathrm{Tr}(\hat\Sigma) + 2\varepsilon^2) 
    \end{array}\right\} 
= \left\{\PP : 
    \begin{array}{l}
       \E_{\PP}[\tilde d] = \mu    \\
       \E_{\PP}[(\tilde d-\mu)(\tilde d - \mu)^\top ] = \Sigma  \\ 
       \lVert \mu - k\hat\mu \rVert \le k^{\frac{s}{2}}\varepsilon \\
      \Sigma_{ii} \le k^{s}\hat\sigma_i^2
    \end{array}\right\},
\ee
where the equality follows by substituting $\hat\mu^{(k)} = k\hat\mu$ and $\hat\sigma_i = \sqrt{2\mathrm{Tr}(\hat\Sigma) + 2\varepsilon^2}$.
The right-hand-side set in~\eqref{eq:outer-G} coincides with $ \mathcal R(k\hat\mu,k^{\frac{s}{2}}\hat\sigma,k^{\frac{s}{2}}\varepsilon)$ because it only restricts the diagonal elements of its covariance matrix, i.e., the marginal variances of distributions. This completes the proof.  
\end{proof}

Proposition~\ref{prop:outer-approx} establishes a connection between the data-driven Wasserstein ambiguity set and the moment-based ambiguity set, which allows us to control the growth rate of the mean and marginal variance for any distribution in $\mathcal{A}(\theta^{(k)})$. Next, we find an upper bound for $\mathrm{Obj} (x_{\varsigma,\tau}^{(k)}, b^{(k)},\theta^{(k)} )$
by approximating the optimal second-stage decision via truncated linear decision rules (TLDR). Based on constraints on $v$ and $U$ dictated by Proposition~\ref{prop:HU}, given the first-stage decision $x_{\varsigma,\tau}^{(k)}$, we find a feasible truncated linear decision rule by solving
\begin{subequations}
 \begin{equation}\label{eq:V-bar-wass}
 \bar V_{\varsigma, \tau} (b^{(k)},\theta^{(k)}) \Let \left\{
    \begin{array}{cl}
    \min & c^\top x_{\varsigma, \tau}^{(k)} + \varrho_{M_{\varsigma, \tau}(\theta^{(k)})} (- p^\top \min\{ v ,U \tilde d\} )\\ [1ex]
        \st & v \in \R_+^{\Ny},~ U\in\R_+^{\Ny\times \Nd} \\
            &  A v \leq x_{\varsigma, \tau}^{(k)},~ H U\leq I,  
    \end{array} 
    \right. 
\end{equation}
where the constant $c^\top x_{\varsigma, \tau}^{(k)}$ in the objective does not impact its optimal solution set. Let $\bar V_{\varsigma, \tau}(b^{(k)},\theta^{(k)})$ be attained at $(v_{\varsigma, \tau}^{(k)},U_{\varsigma, \tau}^{(k)})$, we define the TLDR
\be\label{eq:policy-wass}
\mathbf{y}_{\varsigma, \tau}^{(k)}(d) \Let \min\{v_{\varsigma, \tau}^{(k)},U_{\varsigma, \tau}^{(k)} d \}.
\ee
\end{subequations}
The cost generated by $(x_{\varsigma,\tau}^{(k)},\mathbf{y}_{\varsigma,\tau}^{(k)})$ is denoted by
\[
C(x_{\varsigma,\tau}^{(k)},\mathbf{y}_{\varsigma,\tau}^{(k)}, b^{(k)}, \theta^{(k)}) = c^\top x_{\varsigma, \tau}^{(k)} + \sup\limits_{\PP\in\mathcal{A}(\theta^{(k)})} \varrho_{\PP} (- p^\top \mathbf{y}_{\varsigma,\tau}^{(k)}(\tilde d)).
\]
By the suboptimality of $\mathbf{y}_{\varsigma,\tau}^{(k)}$, we have $-p^\top\mathbf{y}_{\varsigma,\tau}^{(k)}( d) \ge g(x_{\varsigma,\tau}^{(k)}, d)$ for all $d\ge 0$. As $\varrho_{\PP}(\cdot)$ is monotonic, it follows that $C(x_{\varsigma,\tau}^{(k)},\mathbf{y}_{\varsigma,\tau}^{(k)}, b^{(k)}, \theta^{(k)}) \ge \mathrm{Obj}(x_{\varsigma,\tau}^{(k)}, b^{(k)}, \theta^{(k)})$. Therefore, the suboptimality gap $\mathrm{Obj}(x_{\varsigma,\tau}^{(k)}, b^{(k)}, \theta^{(k)}) - \mathrm{OPT}(b^{(k)},\theta^{(k)})$ is upper bounded by $C(x_{\varsigma,\tau}^{(k)},\mathbf{y}_{\varsigma,\tau}^{(k)}, b^{(k)}, \theta^{(k)}) - V_0(b^{(k)},\theta^{(k)})$. We further separate the latter term into two parts: 
\be\label{eq:separation-wass}
\begin{aligned}
     & C(x_{\varsigma, \tau}^{(k)},\mathbf{y}_{\varsigma, \tau}^{(k)},b^{(k)},\theta^{(k)}) - V_{0}(b^{(k)},\theta^{(k)})  \\
    = & \underbrace{C(x_{\varsigma, \tau}^{(k)},\mathbf{y}_{\varsigma, \tau}^{(k)},b^{(k)},\theta^{(k)}) - \bar V_{\varsigma, \tau}(b^{(k)},\theta^{(k)})}_{\mathrm{(I)}} + \underbrace{\bar V_{\varsigma, \tau}(b^{(k)},\theta^{(k)}) -  V_{0}(b^{(k)},\theta^{(k)})}_{\mathrm{(II)}}.
\end{aligned}
\ee
We first bound term $\mathrm{(I)}$ from above, for which the following lemma is crucial.

\begin{lemma}[Risk upper bound - Wasserstein DRO] \label{lemma:risk-upper-wass}
    Suppose that Assumptions~\ref{a:risk-measure} and~\ref{a:ambiguity-wass} hold, and that the scaling scheme~\eqref{eq:wass-scaling} is in force. For any $p \in \R_+^{\Nd}, v \in \R^{\Ny}$ and $U\in\R_+^{\Ny\times\Nd}$, it holds that
    \[
    \sup\limits_{\PP \in  \mathcal A(\theta^{(k)})}  \varrho_{\PP}(p^\top \max\{0, v - U \tilde d\})\leq (\frac{1}{2}+\alpha) k^{\frac{s}{2}}p^\top  U \hat\sigma +  \sup\limits_{\mu:\lVert\mu-k\hat\mu \rVert\le k^{s/2}\varepsilon} \sum\limits_{i\in[\Nd]} p_i (v - U \mu)_i \mathbb{I}_{ ( v -  U \mu)_i > 0},
    \]
    where $\hat\sigma_i \Let \sqrt{2\mathrm{Tr}(\hat\Sigma) + 2\varepsilon^2} $ and $U_i$ is the $i$-th row vector of $U$.
\end{lemma}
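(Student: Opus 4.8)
The plan is to reduce the Wasserstein worst-case risk bound to the moment-based bound already established in Lemma~\ref{lemma:risk-upper}, exploiting the outer approximation from Proposition~\ref{prop:outer-approx}. First I would observe that $\mathcal A(\theta^{(k)}) \subseteq \mathcal R(k\hat\mu, k^{s/2}\hat\sigma, k^{s/2}\varepsilon)$, so the supremum over $\PP \in \mathcal A(\theta^{(k)})$ of $\varrho_{\PP}(p^\top \max\{0, v - U\tilde d\})$ is bounded above by the supremum over $\PP \in \mathcal R(k\hat\mu, k^{s/2}\hat\sigma, k^{s/2}\varepsilon)$. The set $\mathcal R$ is a union over admissible mean vectors $\mu$ with $\lVert \mu - k\hat\mu\rVert \le k^{s/2}\varepsilon$ of moment-based ambiguity sets with that mean and marginal standard deviations bounded by $k^{s/2}\hat\sigma$. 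Concretely, $\mathcal R(k\hat\mu, k^{s/2}\hat\sigma, k^{s/2}\varepsilon) = \bigcup_{\mu:\lVert\mu - k\hat\mu\rVert \le k^{s/2}\varepsilon} \mathcal A((\mu, k^{s/2}\hat\sigma))$, where $\mathcal A(\cdot)$ is the moment-based set from Assumption~\ref{a:ambiguity}. (One must be slightly careful that $\mathcal A((\mu, k^{s/2}\hat\sigma))$ here allows $\mu$ to be an arbitrary nonnegative vector in the ball, which is exactly the generality Lemma~\ref{lemma:risk-upper} was stated under, since it takes an arbitrary $\theta' = (\mu', \sigma')$.)

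The second step is to push the supremum through the union:
\[
\sup_{\PP \in \mathcal A(\theta^{(k)})} \varrho_{\PP}(p^\top \max\{0, v - U\tilde d\}) \le \sup_{\mu:\lVert\mu - k\hat\mu\rVert \le k^{s/2}\varepsilon} ~ \sup_{\PP \in \mathcal A((\mu, k^{s/2}\hat\sigma))} \varrho_{\PP}(p^\top \max\{0, v - U\tilde d\}).
\]
Now I would apply Lemma~\ref{lemma:risk-upper} to the inner supremum with $\theta' = (\mu, k^{s/2}\hat\sigma)$, which gives, for each fixed $\mu$ in the ball,
\[
\sup_{\PP \in \mathcal A((\mu, k^{s/2}\hat\sigma))} \varrho_{\PP}(p^\top \max\{0, v - U\tilde d\}) \le (\tfrac{1}{2}+\alpha) k^{s/2} p^\top U \hat\sigma + \sum_{i\in[\Nd]} p_i (v - U\mu)_i \mathbb{I}_{(v - U\mu)_i > 0}.
\]
Taking the supremum over $\mu$ in the ball, the first term $(\tfrac{1}{2}+\alpha) k^{s/2} p^\top U \hat\sigma$ is constant in $\mu$ and pulls out, while the second term becomes exactly $\sup_{\mu:\lVert\mu - k\hat\mu\rVert \le k^{s/2}\varepsilon} \sum_{i\in[\Nd]} p_i (v - U\mu)_i \mathbb{I}_{(v - U\mu)_i > 0}$, which is the claimed bound.

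The main obstacle — really the only non-bookkeeping point — is justifying the set identity $\mathcal R(k\hat\mu, k^{s/2}\hat\sigma, k^{s/2}\varepsilon) = \bigcup_{\mu} \mathcal A((\mu, k^{s/2}\hat\sigma))$ carefully, including checking that every $\mu$ appearing in the union is nonnegative (so that $\mathcal A((\mu, k^{s/2}\hat\sigma))$ is well-defined as a nonnegative-support moment set) or, alternatively, that Lemma~\ref{lemma:risk-upper} does not actually require $\mu' \ge 0$ and the nonnegativity of the support in $\mathcal R$ only makes the set smaller. A closer look at Lemma~\ref{lemma:risk-upper}'s proof shows it uses only the mean and marginal variance bounds (via Gallego--Moon and the standard risk coefficient), so the argument is robust to this subtlety; I would state this explicitly. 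A secondary minor point is that $\mathcal R$ in Proposition~\ref{prop:outer-approx} is written with a typo ($\E_\PP[\tilde d] = \mu$ with $\mu$ the free variable, matching $\lVert \mu - k\hat\mu\rVert \le k^{s/2}\varepsilon$), so I would align notation so that the $\mu$ in the final bound's supremum is unambiguously this free mean vector. Everything else is a direct substitution, so the proof is short.
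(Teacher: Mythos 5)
Your proposal is correct and follows essentially the same route as the paper's proof: outer-approximate $\mathcal A(\theta^{(k)})$ by $\mathcal R(k\hat\mu,k^{s/2}\hat\sigma,k^{s/2}\varepsilon)$ via Proposition~\ref{prop:outer-approx}, decompose $\mathcal R$ as a union of moment-based sets indexed by the mean $\mu$ in the ball, exchange the supremum with the union, and apply Lemma~\ref{lemma:risk-upper} pointwise in $\mu$. Your side remarks on the nonnegativity of the admissible means and on the fact that Lemma~\ref{lemma:risk-upper} only uses the mean and marginal-variance bounds are accurate and, if anything, tighten a step the paper states somewhat loosely as a set equality.
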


\begin{proof}[Proof of Lemma~\ref{lemma:risk-upper-wass}.]
In Proposition~\ref{prop:outer-approx}, it has been established that $\mathcal A (\theta^{(k)}) \subseteq \mathcal R(k\hat\mu, k^{\frac{s}{2}}\hat\sigma, k^{\frac{s}{2}}\varepsilon)$, which implies
\[
\sup\limits_{\PP \in \mathcal A(\theta^{(k)}) } \varrho_{\PP}(p^\top \max\{0, v - U \tilde d\}) \le  \sup\limits_{\PP \in \mathcal R(k\hat\mu, k^{s/2}\hat\sigma, k^{s/2}\varepsilon)} \varrho_{\PP}(p^\top \max\{0, v - U \tilde d\}).
\]
Observe that
\[
\mathcal R(k\hat\mu, k^{\frac{s}{2}}\hat\sigma, k^{\frac{s}{2}}\varepsilon) = \bigcup\limits_{\mu:\lVert \mu- k\hat\mu \rVert\le k^{s/2}\varepsilon} \mathcal A_M\left((\mu,k^{s/2}\hat\sigma)\right),
\]
where $\mathcal A_M(\cdot)$ is the moment-based ambiguity set defined in~\eqref{eq:ambiguity}. Subsequently, it holds that
\be\label{eq:to-be-used-2}
\begin{aligned}
    &\sup\limits_{\PP \in \mathcal R(k\hat\mu, k^{s/2}\hat\sigma, k^{s/2}\varepsilon)}  \varrho_{\PP}(p^\top \max\{0, v - U \tilde d\}) \\
    = & \sup\limits_{\mu:\lVert\mu- k\hat\mu \rVert\le k^{s/2}\varepsilon} \sup\limits_{\PP \in \mathcal A_M\left((\mu,k^{s/2}\hat\sigma)\right)} \varrho_{\PP}(p^\top \max\{0, v - U \tilde d\}) \\
    \le & \sup\limits_{\mu:\lVert\mu-k\hat\mu \rVert\le k^{s/2}\varepsilon} (\frac{1}{2}+\alpha) k^{\frac{s}{2}}p^\top  U\hat\sigma + \sum\limits_{i\in[\Nd]} p_i (v - U \mu)_i \mathbb{I}_{( v - U \mu)_i > 0} \\
    = & (\frac{1}{2}+\alpha) k^{\frac{s}{2}}p^\top  U\hat\sigma + \sup\limits_{\mu:\lVert\mu-k\hat\mu \rVert\le k^{s/2}\varepsilon} \sum\limits_{i\in[\Nd]} p_i (v - U \mu)_i \mathbb{I}_{( v - U \mu)_i > 0},
\end{aligned}
\ee
where the inequality follows from Lemma~\ref{lemma:risk-upper}. This completes the proof. 
\end{proof}

Before we proceed to establish upper bounds for two terms in~\eqref{eq:separation-wass}, we first define 
\begin{equation} \label{eq:L}
L \Let \max\left \{ \sum\limits_{i\in[\Nd]} p_i \lVert U_{i}\rVert : ~ HU\le I,~ U\in \R_+^{\Ny \times \Nd} \right \},
\end{equation}
which depends on the vector $p\in\R_+^{\Ny}$ and the matrix $H$. Under Assumption~\ref{a:model-param}\ref{a:model-param-1}, the matrix $H\in \R_+^{\Nd \times \Ny} $ has no zero columns, which confines the feasible region of \eqref{eq:L} to be compact, and thus, $L$ exists and is finite.

\begin{proposition}[Loss upper bound (I) - Wasserstein DRO] \label{prop:loss-upper-bound-wass}
    Suppose that Assumptions~\ref{a:risk-measure} and~\ref{a:ambiguity-wass} hold, and that the scaling scheme~\eqref{eq:wass-scaling} is in force. For the mechanism $M_{\varsigma, \tau}$ in~\eqref{eq:Mall-wass}, the following hold:
    \begin{enumerate}\renewcommand{\labelenumi}{(\roman{enumi})}
    \item Under the mechanism $M_0$,
    \[
C(x_{0}^{(k)},\mathbf{y}_{0}^{(k)},b^{(k)},\theta^{(k)}) - \bar V_{0}(b^{(k)}, \theta^{(k)}) \le (\frac{1}{2}+\alpha )k^{\frac{s}{2}} L \lVert \hat\sigma \rVert+ k^{\frac{s}{2}}L\varepsilon.
    \]
    \item Under the mechanism $M_{\varsigma, \tau}$ with $(\varsigma,\tau) \in (0,\hat\sigma] \times (0,\hat\tau_{\max}]$,
    \[
    C(x_{\varsigma, \tau}^{(k)},\mathbf{y}_{\varsigma, \tau}^{(k)},b^{(k)},\theta^{(k)}) - \bar V_{\varsigma, \tau}(b^{(k)},\theta^{(k)}) \le  (\frac{1}{2}+\alpha)k^{\frac{s}{2}} L \lVert \hat\sigma \rVert + k^{\frac{s}{2}}L\varepsilon + \sqrt{\frac{1-\tau}{\tau}}k^{\frac{s}{2}} L\lVert \varsigma \rVert.
    \]
\end{enumerate}
\end{proposition}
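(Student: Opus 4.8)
The plan is to mirror the proof of Proposition~\ref{prop:loss-upper-bound} (the moment-based counterpart), substituting Lemma~\ref{lemma:risk-upper} by its Wasserstein version Lemma~\ref{lemma:risk-upper-wass} and absorbing the resulting supremum over admissible means into the constant $L$ of~\eqref{eq:L}. First I would rewrite $C(x_{\varsigma,\tau}^{(k)},\mathbf{y}_{\varsigma,\tau}^{(k)},b^{(k)},\theta^{(k)})$ exactly as in~\eqref{eq:to-be-used-1}: by translation invariance of $\{\varrho_{\PP}\}$, for any $\PP$ one has $\varrho_{\PP}(-p^\top\min\{v,U\tilde d\}) = -p^\top v + \varrho_{\PP}(p^\top\max\{0,v-U\tilde d\})$ with $(v,U)=(v_{\varsigma,\tau}^{(k)},U_{\varsigma,\tau}^{(k)})$, and since $-\min\{v,Ud\}\ge -v$ with $p\ge 0$, monotonicity of $\varrho$ gives $-p^\top v \le \varrho_{M_{\varsigma,\tau}(\theta^{(k)})}(-p^\top\min\{v,U\tilde d\})$. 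Substituting this and taking the supremum over $\PP\in\mathcal A(\theta^{(k)})$ yields
\[
C(x_{\varsigma,\tau}^{(k)},\mathbf{y}_{\varsigma,\tau}^{(k)},b^{(k)},\theta^{(k)}) - \bar V_{\varsigma,\tau}(b^{(k)},\theta^{(k)}) \le \sup_{\PP\in\mathcal A(\theta^{(k)})}\varrho_{\PP}\bigl(p^\top\max\{0,v_{\varsigma,\tau}^{(k)}-U_{\varsigma,\tau}^{(k)}\tilde d\}\bigr),
\]
so it remains to control the right-hand side via Lemma~\ref{lemma:risk-upper-wass} instead of Lemma~\ref{lemma:risk-upper}. (The well-posedness of the problem~\eqref{eq:V-bar-wass} defining $\bar V_{\varsigma,\tau}$, in particular $d_l\ge 0$, follows from the parameter range~\eqref{eq:param-range-wass} by the same argument used in the first part of the proof of Proposition~\ref{prop:value-upper-bound}.)

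Next I would bound the two summands produced by Lemma~\ref{lemma:risk-upper-wass}. For the variance term, Cauchy--Schwarz gives $p^\top U\hat\sigma = \sum_{i\in[\Nd]} p_i U_i\hat\sigma \le \lVert\hat\sigma\rVert\sum_{i\in[\Nd]} p_i\lVert U_i\rVert \le L\lVert\hat\sigma\rVert$ by the definition~\eqref{eq:L} of $L$, valid with $U=U_{\varsigma,\tau}^{(k)}$ since the latter satisfies $HU\le I$, $U\ge 0$; hence the variance term is at most $(\tfrac12+\alpha)k^{s/2}L\lVert\hat\sigma\rVert$. For the indicator term, I would first argue, exactly as in the moment-based proof, that the optimizer may be taken to satisfy $v_{\varsigma,\tau}^{(k)}\le U_{\varsigma,\tau}^{(k)}d_h$, because replacing $v$ by $\min\{v,Ud_h\}$ leaves $\varrho_{M_{\varsigma,\tau}(\theta^{(k)})}(-p^\top\min\{v,U\tilde d\})$ unchanged — the only atoms of $M_{\varsigma,\tau}(\theta^{(k)})$ are $d_l\le d_h$ — while preserving feasibility. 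Then for every $\mu$ with $\lVert\mu-k\hat\mu\rVert\le k^{s/2}\varepsilon$ and every component $i$, $(v_{\varsigma,\tau}^{(k)}-U_{\varsigma,\tau}^{(k)}\mu)_i \le U_i(d_h-\mu)$ with $U=U_{\varsigma,\tau}^{(k)}$; writing $d_h-\mu = (d_h-k\hat\mu)+(k\hat\mu-\mu)$ and applying Cauchy--Schwarz to the second piece gives $U_i(d_h-\mu) \le U_i(d_h-k\hat\mu) + \lVert U_i\rVert k^{s/2}\varepsilon$. Summing against $p_i$ and invoking~\eqref{eq:L} then bounds the indicator term by $k^{s/2}L\varepsilon$ under $M_0$ (where $d_h=k\hat\mu$, so the first piece vanishes), and by $\sqrt{(1-\tau)/\tau}\,k^{s/2}L\lVert\varsigma\rVert + k^{s/2}L\varepsilon$ under $M_{\varsigma,\tau}$, since there $d_h-k\hat\mu=\sqrt{(1-\tau)/\tau}\,k^{s/2}\varsigma\ge 0$ and $\sum_{i\in[\Nd]} p_i U_i\varsigma \le L\lVert\varsigma\rVert$. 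Collecting the variance and indicator bounds gives the two claimed inequalities.

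The main obstacle relative to the moment-based argument is that Lemma~\ref{lemma:risk-upper-wass} retains a supremum over all admissible first moments $\mu$ ranging over a Euclidean ball, rather than a fixed mean; handling it forces one to pass from componentwise to Euclidean-norm estimates through Cauchy--Schwarz, which is exactly why the constant $L=\max\{\sum_{i\in[\Nd]} p_i\lVert U_i\rVert : HU\le I,\ U\in\R_+^{\Ny\times\Nd}\}$ is introduced — and its finiteness is guaranteed by Assumption~\ref{a:model-param}\ref{a:model-param-1} ($H$ has no zero column, so the feasible set is compact). Beyond this bookkeeping, no genuinely new estimate is required; the reduction step and the ``$v\le Ud_h$ without loss of generality'' argument are identical in spirit to those in the proof of Proposition~\ref{prop:loss-upper-bound}.
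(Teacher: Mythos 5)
Your proposal is correct and follows essentially the same route as the paper's proof: reuse the bound~\eqref{eq:to-be-used-1}, apply Lemma~\ref{lemma:risk-upper-wass}, pass to the constant $L$ via Cauchy--Schwarz, invoke the ``$v\le Ud_h$ without loss of generality'' argument, and split $d_h-\mu$ into the deterministic shift $d_h-k\hat\mu$ and the ball-constrained piece $k\hat\mu-\mu$ before taking the supremum. The only cosmetic difference is that you apply Cauchy--Schwarz to the two pieces of $d_h-\mu$ separately whereas the paper uses the triangle inequality on $\lVert d_h-\mu\rVert$; both yield the identical final bound.
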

\begin{proof}[Proof of Proposition~\ref{prop:loss-upper-bound-wass}.]
In the proof of Proposition~\ref{prop:loss-upper-bound}, we have shown in~\eqref{eq:to-be-used-1} that
\[ 
C( x_{\varsigma, \tau}^{(k)},\mathbf{y}_{\varsigma, \tau}^{(k)},b^{(k)},\theta^{(k)} )
\le \bar V_{\varsigma, \tau}(b^{(k)},\theta^{(k)}) +  \sup_{\PP \in \mathcal A(\theta^{(k)})}~\varrho_{\PP}( p^\top \max\{ 0, v_{\varsigma, \tau}^{(k)} - U_{\varsigma, \tau}^{(k)} \tilde d \}).
\]
Applying Lemma~\ref{lemma:risk-upper-wass} to the last term yields
\be\label{eq:C-wass}
\begin{aligned}
& C( x_{\varsigma, \tau}^{(k)},\mathbf{y}_{\varsigma, \tau}^{(k)},b^{(k)},\theta^{(k)} ) - \bar V_{\varsigma, \tau}(b^{(k)},\theta^{(k)}) \\
\le & (\frac{1}{2}+\alpha) k^{\frac{s}{2}} p^\top U_{\varsigma, \tau}^{(k)} \hat\sigma + \sup\limits_{ \mu:\lVert\mu-k\hat\mu \rVert\le k^{s/2}\varepsilon } \sum\limits_{i\in[\Nd]} p_i (v_{\varsigma, \tau}^{(k)} - U_{\varsigma, \tau}^{(k)} \mu )_i \mathbb{I}_{( v_{\varsigma, \tau}^{(k)} - U_{\varsigma, \tau}^{(k)} \mu)_i > 0} \\
\le & (\frac{1}{2}+\alpha) k^{\frac{s}{2}} L\lVert \hat\sigma \rVert + \sup\limits_{ \mu:\lVert\mu-k\hat\mu \rVert\le k^{s/2}\varepsilon } \sum\limits_{i\in[\Nd]} p_i (v_{\varsigma, \tau}^{(k)} - U_{\varsigma, \tau}^{(k)} \mu )_i \mathbb{I}_{( v_{\varsigma, \tau}^{(k)} - U_{\varsigma, \tau}^{(k)} \mu)_i > 0},
\end{aligned}
\ee
The second inequality in~\eqref{eq:C-wass} follows because 
\[
p^\top U_{\varsigma, \tau}^{(k)} \hat\sigma = \sum\limits_{i\in[\Nd]} p_i U_{\varsigma, \tau,i}^{(k)} \hat\sigma \le \sum\limits_{i\in[\Nd]} p_i \lVert U_{\varsigma, \tau,i}^{(k)} \rVert \lVert \hat\sigma \rVert \le L\lVert\hat\sigma\rVert,
\]
where $U_{\varsigma, \tau, i}^{(k)} $ is the $i$-th row vector of the matrix $U_{\varsigma, \tau}^{(k)}$, and the inequalities follow from Cauchy-Schwarz inequality and the definition of $L$. 
Observe the problem defining $\bar V_{\varsigma, \tau}(b^{(k)},\theta^{(k)})$ in~\eqref{eq:V-bar-wass}: its optimal solution $(v_{\varsigma, \tau}^{(k)}, U_{\varsigma, \tau}^{(k)} )$ must satisfy $v_{\varsigma, \tau}^{(k)} \le U_{\varsigma, \tau}^{(k)} d_h$, otherwise $(U_{\varsigma, \tau}^{(k)} d_h, U_{\varsigma, \tau}^{(k)} )$ is a feasible solution with smaller objective value and violates the optimality of $(v_{\varsigma, \tau}^{(k)}, U_{\varsigma, \tau}^{(k)} )$.
Hence, for any $i\in[\Nd]$, we have
\[
\begin{aligned}
(v_{\varsigma, \tau}^{(k)} - U_{\varsigma, \tau}^{(k)} \mu )_i \mathbb{I}_{( v_{\varsigma, \tau}^{(k)} - U_{\varsigma, \tau}^{(k)} \mu )_i > 0} & = \max\{ 0 , (v_{\varsigma, \tau}^{(k)} - U_{\varsigma, \tau}^{(k)} \mu )_i \} \\
& \le \max\{ 0 , [ U_{\varsigma, \tau}^{(k)} (  d_h - \mu )]_i \} \\
& \le | [ U_{\varsigma, \tau}^{(k)} (  d_h - \mu )]_i | \\
&\le \lVert U_{\varsigma, \tau,i}^{(k)} \rVert \lVert  d_h - \mu \rVert,
\end{aligned}
\]
where the third inequality follows from the Cauchy-Schwarz inequality. By the definition of $d_h$, we have
\[
d_h - \mu = \begin{cases}  
k\hat\mu - \mu & \text{if } \varsigma = 0 \text{ or } \tau = 0, \\ 
k\hat\mu - \mu + \sqrt{(1-\tau) / \tau} k^{\frac{s}{2}}\varsigma & \text{otherwise.}
\end{cases} 
\]
Therefore, summing up over $i\in[\Nd]$ yields that
\[
\begin{aligned}
 \sum\limits_{i\in[\Nd]} p_i (v_{\varsigma, \tau}^{(k)} - U_{\varsigma, \tau}^{(k)} \mu )_i \mathbb{I}_{( v_{\varsigma, \tau}^{(k)} - U_{\varsigma, \tau}^{(k)} \mu)_i > 0} & \le  \sum\limits_{i\in[\Nd]} p_i\lVert U_{\varsigma, \tau,i}^{(k)} \rVert \lVert   d_h - \mu \rVert \\
 & \le  L \lVert   d_h - \mu \rVert \\
& = \begin{cases}
  L \lVert k\hat\mu - \mu \rVert & \text{if } \varsigma = 0 \text{ or } \tau=0 ,\\
   L \lVert k\hat\mu - \mu + \sqrt{(1-\tau) / \tau} k^{\frac{s}{2}}\varsigma \rVert & \text{otherwise},  
\end{cases}
 \\
& \le \begin{cases}
   L\lVert k\hat\mu - \mu \rVert & \text{if } \varsigma = 0 \text{ or } \tau=0 ,\\
   L(\lVert k\hat\mu - \mu \rVert + \sqrt{(1-\tau) / \tau} k^{\frac{s}{2}} \lVert\varsigma \rVert ) & \text{otherwise},
\end{cases}
\end{aligned}
\]
where the second inequality follows from the definition of $L$, and the last inequality follows from the triangle inequality. Taking the supremum over $\{\mu: \lVert \mu - k\hat\mu  \rVert \le k^{\frac{s}{2}} \varepsilon  \}$ yields that
\[
\begin{aligned}
 & \sup\limits_{\mu: \lVert \mu - k\hat\mu  \rVert \le k^{\frac{s}{2}} \varepsilon }\sum\limits_{i\in[\Nd]} p_i (v_{\varsigma, \tau}^{(k)} - U_{\varsigma, \tau}^{(k)} \mu )_i \mathbb{I}_{( v_{\varsigma, \tau}^{(k)} - U_{\varsigma, \tau}^{(k)} \mu)_i > 0} \\
 \le &  \begin{cases}
   k^{\frac{s}{2}} L \varepsilon & \text{if } \varsigma = 0 \text{ or } \tau=0 ,\\
   k^{\frac{s}{2}} L \varepsilon + \sqrt{(1-\tau) / \tau} k^{\frac{s}{2}} L\lVert\varsigma \rVert  & \text{otherwise},
\end{cases}
\end{aligned}
\]
which completes the proof together with~\eqref{eq:C-wass}. 
\end{proof}

\begin{proposition}[Value upper bound (II) - Wasserstein DRO]\label{prop:value-upper-bound-wass}
    Under Assumptions~\ref{a:risk-measure},~\ref{a:model-param}\ref{a:model-param-1} and~\ref{a:ambiguity-wass}, consider the scaling scheme~\eqref{eq:wass-scaling} and a family of mechanism $M_{\varsigma,\tau}$ in~\eqref{eq:Mall-wass}. Then, the following hold: 
    \begin{enumerate}\renewcommand{\labelenumi}{(\roman{enumi})}
        \item Under the mechanism $M_0$,
        \[
        \bar V_{0}(b^{(k)},\theta^{(k)}) = V_{0}(b^{(k)},\theta^{(k)}).
        \]
        \item Under the mechanism $M_{\varsigma,\tau}$ with $(\varsigma, \tau) \in ( 0,\hat\sigma ] \times (0,\hat \tau_{\max} ] $,
        \[
     \bar V_{\varsigma, \tau}(b^{(k)},\theta^{(k)}) - V_{0} (b^{(k)},\theta^{(k)}) \le (\frac{1}{2}+\alpha )k^{\frac{s}{2}} L \lVert \hat\sigma \rVert + k^{\frac{s}{2}}L\varepsilon + \left(\sqrt{\frac{1-\tau}{\tau}} + \sqrt{\frac{\tau}{1-\tau} } \right) k^{\frac{s}{2}} L \lVert \varsigma \rVert.
    \]
    \end{enumerate} 
\end{proposition}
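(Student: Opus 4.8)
The plan is to mirror the proof of Proposition~\ref{prop:value-upper-bound}, replacing every appeal to the containment ``$M_{\varsigma,\tau}(\theta^{(k)})\in\mathcal A(\theta^{(k)})$'' --- which fails for Wasserstein balls --- by the weaker but sufficient fact that the two-point distribution $M_{\varsigma,\tau}(\theta^{(k)})$ still has tightly controlled moments. Indeed, a direct computation shows $M_{\varsigma,\tau}(\theta^{(k)})$ has mean exactly $k\hat\mu$, marginal variances exactly $k^{s}\varsigma_i^{2}\le k^{s}\hat\sigma_i^{2}$ (since $\varsigma\le\hat\sigma$), and non-negative support: the low atom obeys $d_{l,i}\ge k\hat\mu_i-\hat\gamma k^{s/2}\varsigma_i\ge(k-k^{s/2})\hat\mu_i\ge0$ because $\tau\le\hat\tau_{\max}$ and $s<2$, exactly as in the moment-based case. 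Hence $M_{\varsigma,\tau}(\theta^{(k)})\in\mathcal A_M\!\left((k\hat\mu,k^{s/2}\hat\sigma)\right)$, and \emph{a fortiori} $M_{\varsigma,\tau}(\theta^{(k)})\in\mathcal R(k\hat\mu,k^{s/2}\hat\sigma,k^{s/2}\varepsilon)$ from Proposition~\ref{prop:outer-approx}; this is all that will be used.

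\textbf{Part (i).} This is verbatim the moment-based argument, as it never touches the ambiguity set. Since $M_0(\theta^{(k)})=\delta_{k\hat\mu}$, I start from a minimizer $(x_0^{(k)},y_0^{(k)})$ of $V_0(b^{(k)},\theta^{(k)})$, and use Assumption~\ref{a:model-param}\ref{a:model-param-1} (one nonzero entry per column of $H$) to build $\bar U_0^{(k)}\ge0$ with $H\bar U_0^{(k)}\le I$ and $k\bar U_0^{(k)}\hat\mu=y_0^{(k)}$; setting $\bar v_0^{(k)}=y_0^{(k)}=\bar U_0^{(k)}(k\hat\mu)$ produces a TLDR feasible for the problem defining $\bar V_0$ that attains $c^\top x_0^{(k)}-p^\top y_0^{(k)}=V_0(b^{(k)},\theta^{(k)})$. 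With the trivial inequality $\bar V_0\ge V_0$ this gives $\bar V_0=V_0$, attained at $(\bar v_0^{(k)},\bar U_0^{(k)})$ with $\bar v_0^{(k)}=\bar U_0^{(k)}(k\hat\mu)$; I keep this identity for Part (ii).

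\textbf{Part (ii).} Introduce the auxiliary value $\hat V_{\varsigma,\tau}(b^{(k)},\theta^{(k)})$ as in \eqref{eq:V-hat} (now with $M_{\varsigma,\tau}(\theta^{(k)})$ taken from \eqref{eq:Mall-wass}), so that $V_{\varsigma,\tau}\le\hat V_{\varsigma,\tau}\le\bar V_{\varsigma,\tau}$, and split $\bar V_{\varsigma,\tau}-V_0\le(\bar V_{\varsigma,\tau}-V_{\varsigma,\tau})+(\hat V_{\varsigma,\tau}-V_0)$ exactly as in \eqref{eq:V-bar-V-0}. The first term is handled as in \eqref{eq:term1}: taking a minimizer $\bar{\mathbf y}_{\varsigma,\tau}^{(k)}$ of $g(x_{\varsigma,\tau}^{(k)},\cdot)$ and constructing $(\bar v_{\varsigma,\tau}^{(k)},\bar U_{\varsigma,\tau}^{(k)})$ with $H\bar U_{\varsigma,\tau}^{(k)}\le I$ and $\bar v_{\varsigma,\tau}^{(k)}=\bar U_{\varsigma,\tau}^{(k)}d_h=\bar{\mathbf y}_{\varsigma,\tau}^{(k)}(d_h)$, then using monotonicity and translation invariance of $\varrho$ together with $d_l\le d_h$, I obtain $\bar V_{\varsigma,\tau}-V_{\varsigma,\tau}\le p^\top\bar U_{\varsigma,\tau}^{(k)}(d_h-d_l)=\left(\sqrt{\frac{1-\tau}{\tau}}+\sqrt{\frac{\tau}{1-\tau}}\right)k^{s/2}p^\top\bar U_{\varsigma,\tau}^{(k)}\varsigma$; a Cauchy--Schwarz step and the definition \eqref{eq:L} of $L$ replace $p^\top\bar U_{\varsigma,\tau}^{(k)}\varsigma$ by $L\lVert\varsigma\rVert$. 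For the second term, feasibility of $(x_0^{(k)},\bar v_0^{(k)},\bar U_0^{(k)})$ for the $\hat V_{\varsigma,\tau}$ problem gives $\hat V_{\varsigma,\tau}\le c^\top x_0^{(k)}+\varrho_{M_{\varsigma,\tau}(\theta^{(k)})}\!\left(-p^\top\min\{\bar v_0^{(k)},\bar U_0^{(k)}\tilde d\}\right)$; rewriting via translation invariance and then bounding $\varrho_{M_{\varsigma,\tau}(\theta^{(k)})}(p^\top\max\{0,\bar v_0^{(k)}-\bar U_0^{(k)}\tilde d\})$ by Lemma~\ref{lemma:risk-upper} applied with $\theta'=(k\hat\mu,k^{s/2}\hat\sigma)$ --- legitimate because $M_{\varsigma,\tau}(\theta^{(k)})\in\mathcal A_M((k\hat\mu,k^{s/2}\hat\sigma))$ --- yields $(\tfrac12+\alpha)k^{s/2}p^\top\bar U_0^{(k)}\hat\sigma$ plus an indicator term that vanishes because $\bar v_0^{(k)}=\bar U_0^{(k)}(k\hat\mu)$; with $\bar V_0=V_0$ this gives $\hat V_{\varsigma,\tau}-V_0\le(\tfrac12+\alpha)k^{s/2}L\lVert\hat\sigma\rVert$. (Equivalently, relaxing to $\mathcal R(k\hat\mu,k^{s/2}\hat\sigma,k^{s/2}\varepsilon)$ and invoking Lemma~\ref{lemma:risk-upper-wass} reinstates the harmless nonnegative summand $k^{s/2}L\varepsilon$, landing directly on the stated form.) Adding the two bounds gives the claimed inequality.

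\textbf{Main obstacle.} The crux is precisely the loss of $M_{\varsigma,\tau}(\theta^{(k)})\in\mathcal A(\theta^{(k)})$, which in the moment-based proof let one route the bound on $\hat V_{\varsigma,\tau}$ through $C(x_0^{(k)},\mathbf y_0^{(k)},\cdot)$ and Proposition~\ref{prop:loss-upper-bound}(i). That route is unavailable here, so the bound on $\hat V_{\varsigma,\tau}$ must be manufactured directly from the moments of the two-point distribution via Proposition~\ref{prop:outer-approx} and Lemma~\ref{lemma:risk-upper}/\ref{lemma:risk-upper-wass}. The subtlety to watch is the first moment: the distributions in $\mathcal A(\theta^{(k)})$ no longer share a common mean but only one confined to a ball of radius $k^{s/2}\varepsilon$, and it is the careful choice of the $\bar V_0$-minimizer with $\bar v_0^{(k)}=\bar U_0^{(k)}(k\hat\mu)$ that keeps the resulting mean-ball/indicator contribution bounded (by $k^{s/2}L\varepsilon$) rather than divergent.
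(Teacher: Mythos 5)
Your proof is correct and follows essentially the same route as the paper's: part (i) is the verbatim moment-based construction, and part (ii) uses the same decomposition through $\hat V_{\varsigma,\tau}$, the same bound on $\bar V_{\varsigma,\tau}-V_{\varsigma,\tau}$, and the same key observation that $M_{\varsigma,\tau}(\theta^{(k)})$ has mean $k\hat\mu$ and marginal variances at most $k^{s}\hat\sigma_i^{2}$, so it lies in the outer moment set even though it may leave $\mathcal A(\theta^{(k)})$. Your primary route for the term $\hat V_{\varsigma,\tau}-V_{0}$ is in fact marginally sharper than the paper's (which relaxes to $\mathcal R(k\hat\mu,k^{s/2}\hat\sigma,k^{s/2}\varepsilon)$ and thereby picks up the extra $k^{s/2}L\varepsilon$), but as you note the stated bound follows either way since that summand is nonnegative.
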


\begin{proof}[Proof of Proposition~\ref{prop:value-upper-bound-wass}.]
Note that the mechanism $M_{\varsigma,\tau}$ also maps the ambiguity information to either a two-point distribution ($\tau > 0$) or a Dirac distribution ($\tau = 0$). Most results established in Proof of Proposition~\ref{prop:value-upper-bound} can be applied directly. The proof of $\bar V_{0}(b^{(k)},\theta^{(k)}) = V_{0}(b^{(k)},\theta^{(k)})$ follows same steps in Proposition~\ref{prop:value-upper-bound}. For the case where $(\varsigma, \tau) \in ( 0,\hat\sigma ] \times (0,\hat \tau_{\max} ] $, we utilize the outer approximation set $\mathcal{R}(k\hat\mu,k^{\frac{s}{2}}\hat\sigma,k^{\frac{s}{2}}\varepsilon)$. We introduce the problem
 \begin{equation}\label{eq:V-hat-wass}
 \hat V_{\varsigma, \tau} (b^{(k)},\theta^{(k)}) \Let \left\{
    \begin{array}{cl}
    \min & c^\top x + \varrho_{M_{\varsigma, \tau}(\theta^{(k)})} (- p^\top \min\{ v ,U \tilde d \}) \\
        \st & x \in \R_+^{\Ny},~v \in \R_+^{\Ny},~ U\in\R_+^{\Ny\times \Nd} \\
            & A v \leq x,~ H U\leq I,  
    \end{array} 
    \right. 
\end{equation}
and it has been established that
\[
\bar V_{\varsigma, \tau} (b^{(k)},\theta^{(k)}) - V_0 (b^{(k)},\theta^{(k)}) \le \bar V_{\varsigma, \tau} (b^{(k)},\theta^{(k)}) - V_{\varsigma, \tau} (b^{(k)},\theta^{(k)}) + \hat V_{\varsigma, \tau} (b^{(k)},\theta^{(k)}) - V_0 (b^{(k)},\theta^{(k)}).
\]
Further, it also holds that
\[
\begin{aligned}
\bar V_{\varsigma, \tau} (b^{(k)},\theta^{(k)}) - V_{\varsigma, \tau} (b^{(k)},\theta^{(k)}) & \le \left(\sqrt{\frac{1-\tau}{\tau}} + \sqrt{\frac{\tau}{1-\tau}} \right)k^{\frac{s}{2}} p^\top \bar U_{\varsigma,\tau}^{(k)} \varsigma \\
& \le \left(\sqrt{\frac{1-\tau}{\tau}} + \sqrt{\frac{\tau}{1-\tau}} \right)k^{\frac{s}{2}}  L \lVert\varsigma\rVert.
\end{aligned}
\]
The difficulty lies the upper bound for $\hat V_{\varsigma, \tau} (b^{(k)},\theta^{(k)}) - V_0 (b^{(k)},\theta^{(k)})$ as $M_{\varsigma,\tau}(\theta^{(k)})$ may not belong to $\mathcal{A}(\theta)$. However, $M_{\varsigma,\tau}(\theta^{(k)})$ has mean $k\hat\mu$ and marginal variance $k^{\frac{s}{2}}\varsigma\le k^{\frac{s}{2}}\hat\sigma$, which implies that $M_{\varsigma,\tau}(\theta^{(k)})\in\mathcal{R}(k\hat\mu,k^{\frac{s}{2}}\hat\sigma,k^{\frac{s}{2}}\varepsilon)$. Hence, we have
\[
\begin{aligned}
\hat V_{\varsigma, \tau} (b^{(k)},\theta^{(k)}) & \le c^\top x_0^{(k)} + \varrho_{M_{\varsigma,\tau}(\theta^{(k)})} ( -p^\top \min\{ v_0^{(k)} , U_0^{(k)} \} ) \\
& \le c^\top x_0^{(k)} + \sup\limits_{\PP\in \mathcal{R}(k\hat\mu,k^{\frac{s}{2}}\hat\sigma,k^{\frac{s}{2}}\varepsilon)} \varrho_{\PP}( -p^\top \min\{ v_0^{(k)} , U_0^{(k)} \} ) \\
& \le \bar V_0(b^{(k)},\theta^{(k)}) + \sup\limits_{\PP\in \mathcal{R}(k\hat\mu,k^{\frac{s}{2}}\hat\sigma,k^{\frac{s}{2}}\varepsilon)} (p^\top\max\{ 0, v_0^{(k)} - U_0^{(k)} \tilde d\}) \\
& \le \bar V_0(b^{(k)},\theta^{(k)}) + (\frac{1}{2}+\alpha) k^{\frac{s}{2}}p^\top  U_0^{(k)} \hat\sigma + \sup\limits_{\mu:\lVert \mu-k\hat\mu \rVert \le k^{s/2}\varepsilon} p^\top U_0^{(k)}(k\hat\mu - \mu)\\
& \le \bar V_0(b^{(k)},\theta^{(k)}) + (\frac{1}{2}+\alpha)k^{\frac{s}{2}} L\lVert\hat\sigma\rVert + k^{\frac{s}{2}}L\varepsilon   , 
\end{aligned}
\]
where the first inequality follows from the suboptimality of $(x_0^{(k)}, v_0^{(k)}, U_0^{(k)})$ to the problem defining $\hat V_{\varsigma,\tau}(b^{(k)},\theta^{(k)})$ and the second inequality follows because $M_{\varsigma,\tau}(\theta^{(k)})\in\mathcal{R}(k\hat\mu,k^{\frac{s}{2}}\hat\sigma,k^{\frac{s}{2}}\varepsilon)$. The third inequality follows from the analysis of~\eqref{eq:to-be-used-1} and the forth inequality follows from~\eqref{eq:to-be-used-2}. The last inequality follows from the Cauchy-Schwarz inequality and the definition of $L$. The equality $\bar V_0(b^{(k)},\theta^{(k)}) = V_0(b^{(k)},\theta^{(k)})$ implies that
\[
\hat V_{\varsigma, \tau} (b^{(k)},\theta^{(k)}) -  V_0 (b^{(k)},\theta^{(k)}) \le (\frac{1}{2}+\alpha)k^{\frac{s}{2}} L\lVert\hat\sigma\rVert + k^{\frac{s}{2}}L\varepsilon,
\]
and thus,
\[
\bar V_{\varsigma, \tau} (b^{(k)},\theta^{(k)}) -  V_0 (b^{(k)},\theta^{(k)}) \le (\frac{1}{2}+\alpha)k^{\frac{s}{2}} L\lVert\hat\sigma\rVert + k^{\frac{s}{2}}L\varepsilon + (\sqrt{(1-\tau)/\tau} +\sqrt{\tau/(1-\tau)} )k^{\frac{s}{2}} L\lVert\varsigma\rVert,
\]
which completes the proof. 
\end{proof}

Note that the budget $b^{(k)}$ and the mean of nominal distribution $\hat\PP^{(k)}$ scales linearly under the scaling scheme~\eqref{eq:wass-scaling}. It follows from Lemma~\ref{lemma:risk-pos-homo} that the positive homogeneity holds: $V_{0}(b^{(k)},\theta^{(k)}) = k V_{0}(b^{(1)},\theta^{(1)})$. This positive homogeneity is crucial to prove Theorem~\ref{thm:W-opt}.

\subsubsection{Proof of Asymptotic Optimality.}

\begin{proof}[Proof of Theorem~\ref{thm:W-opt}.]
We start with the mechanism $M_0$ with the corresponding lower-level solution $x_0^{(k)}$ for each $k$. By Proposition~\ref{prop:loss-upper-bound-wass} and~\ref{prop:value-upper-bound-wass}, it holds that
\begin{align*}
C(x_{0}^{(k)},\mathbf{y}_{0}^{(k)},b^{(k)},\theta^{(k)}) & \le  V_{0}(b^{(k)}, \theta^{(k)}) + \big( (\frac{1}{2}+\alpha)L\lVert\hat\sigma\rVert + L\varepsilon \big) k^{\frac{s}{2}} \\
& = k V_0(b^{(1)},\theta^{(1)}) + \big( (\frac{1}{2}+\alpha)L\lVert\hat\sigma\rVert + L\varepsilon \big) k^{\frac{s}{2}},
\end{align*}
where the equality follows from the positive homogeneity $V_{0}(b^{(k)}, \theta^{(k)}) = k V_0(b^{(1)},\theta^{(1)})$. Assumption~\ref{a:model-param}\ref{a:model-param-2} presumes that $V_0(b^{(1)},\theta^{(1)}) < 0$, under which the negative linear term $k V_0(b^{(1)},\theta^{(1)})$ will dominate the positive sub-linear term $\big( (\frac{1}{2}+\alpha)L\lVert\hat\sigma\rVert + L\varepsilon \big) k^{\frac{s}{2}}$ as $k$ grows, and thus, it holds that
\[
C(x_{0}^{(k)},\mathbf{y}_{0}^{(k)},b^{(k)},\theta^{(k)}) \le k V_0(b^{(1)},\theta^{(1)}) + \big( (\frac{1}{2}+\alpha)L\lVert\hat\sigma\rVert + L\varepsilon \big) k^{\frac{s}{2}} < 0
\]
for sufficiently large $k$. The negativity of $\mathrm{OPT}(b^{(k)},\theta^{(k)})$ and $\mathrm{Obj}(x_0^{(k)},b^{(k)},\theta^{(k)})$ inherit from the suboptimality relationship
\[
\mathrm{OPT}(b^{(k)},\theta^{(k)}) \le \mathrm{Obj}(x_0^{(k)},b^{(k)},\theta^{(k)}) \le C(x_{0}^{(k)},\mathbf{y}_{0}^{(k)},b^{(k)},\theta^{(k)}).
\]
Note that the ratio between negative terms equals the ratio between their absolute values. 
Hence, it holds that
\begin{align*}
1  \ge \frac{\mathrm{Obj}(x_{0}^{(k)}, b^{(k)},\theta^{(k)})} { \mathrm{OPT}(b^{(k)},\theta^{(k)} ) } & \ge \frac{C(x_{0}^{(k)},\mathbf{y}_{0}^{(k)},b^{(k)},\theta^{(k)}) }{ V_{0}(b^{(k)},\theta^{(k)})} \\
& \ge \frac{kV_{0}(b^{(1)},\theta^{(1)}) + \big( (\frac{1}{2}+\alpha)L\lVert\hat\sigma\rVert + L\varepsilon \big) k^{\frac{s}{2}} }{kV_{0}(b^{(1)},\theta^{(1)})} \\
& = 1 + \frac{ (\frac{1}{2}+\alpha)L\lVert\hat\sigma\rVert + L\varepsilon }{V_{0}(b^{(1)},\theta^{(1)})} k^{\frac{s}{2}-1} \to 1 \text{ as } k \to\infty,    
\end{align*}
where the convergence to $1$ follows because $s\in[1,2)$ and then $ k^{\frac{s}{2}-1} \to 0$ as $k \to\infty$. By the sandwich theorem, we have
\[
\lim\limits_{k\to\infty } \frac{\mathrm{Obj}(x_{0}^{(k)}, b^{(k)},\theta^{(k)})} { \mathrm{OPT}(b^{(k)},\theta^{(k)} ) } = 1 .
\]

Now we proceed with $(\varsigma,\tau)\in (0,\hat\sigma]\times (0,\hat\tau_{\max} ]$. By Proposition~\ref{prop:loss-upper-bound-wass} and Proposition~\ref{prop:value-upper-bound-wass}, we have
\[
C(x_{\varsigma,\tau}^{(k)},\mathbf{y}_{\varsigma,\tau}^{(k)},b^{(k)},\theta^{(k)}) \le  k V_{0}(b^{(1)}, \theta^{(1)}) + \big( (1+2\alpha)\lVert\hat\sigma\rVert + 2 \varepsilon + 2\sqrt{(1-\tau)/\tau} + \sqrt{\tau/(1-\tau) }  \big)L k^{\frac{s}{2}},
\]
which also consists of a negative linear term and a positive sub-linear term. Consequently, for sufficiently large $k$, it holds that
\[
\mathrm{OPT}(b^{(k)},\theta^{(k)}) \le \mathrm{Obj}(x_{\varsigma,\tau}^{(k)},b^{(k)},\theta^{(k)}) \le C(x_{\varsigma,\tau}^{(k)},\mathbf{y}_{\varsigma,\tau}^{(k)},b^{(k)},\theta^{(k)}) < 0.
\]
Hence, taking the ratio yields that
\begin{align*}
1  & \ge \frac{\mathrm{Obj}(x_{\varsigma,\tau}^{(k)}, b^{(k)},\theta^{(k)})} { \mathrm{OPT}(b^{(k)},\theta^{(k)} ) }  \ge \frac{C(x_{\varsigma,\tau}^{(k)},\mathbf{y}_{\varsigma,\tau}^{(k)},b^{(k)},\theta^{(k)}) }{ V_{0}(b^{(k)},\theta^{(k)})} \\
& \ge \frac{kV_{0}(b^{(1)},\theta^{(1)}) + \big( (1+2\alpha)\lVert\hat\sigma\rVert + 2 \varepsilon + 2\sqrt{(1-\tau)/\tau} + \sqrt{\tau/(1-\tau) }  \big)L k^{\frac{s}{2}} }{kV_{0}(b^{(1)},\theta^{(1)})} \\
& = 1 + \frac{ \big( (1+2\alpha)\lVert\hat\sigma\rVert + 2 \varepsilon + 2\sqrt{(1-\tau)/\tau} + \sqrt{\tau/(1-\tau) }  \big)L }{V_{0}(b^{(1)},\theta^{(1)})} k^{\frac{s}{2}-1} \\
& \to 1 \text{ as } k \to\infty,    
\end{align*}
which completes the proof. 
\end{proof}

\section{Applications} \label{appendix:application}

We showcase that many applications satisfy Assumption~\ref{a:model-param}. 
\subsection{Capacity Management Problem with Product Substitution} \label{sec:ato}

We examine the capacity management problem with product upgrades in~\cite{ref:shumsky2009dynamic}. In product upgrades, when the capacity for a specific product is exhausted, the firm may offer a substitute product to fulfill the customer's request. Product upgrades can effectively handle uncertain demand from multiple customer segments, and various service industries also provide upgrades to their customers. For example, car rental agencies offer upgrades by providing customers with more luxurious or expensive car models than initially booked~\citep{ref:carroll1995evolutionary}.
Similar practices occur in hotels with room grade allocations and airlines with class upgrades during overbooking~\citep{ref:karaesmen2004overbooking}. The uncertain demand for each product is always non-negative. We illustrate the modeling framework via a classical assemble-to-order (ATO) problem.

In an ATO system, the firm maintains component stocks $x \in \R_+^{\Nx}$ and upon receiving orders $d\in\R_+^{\Nd}$, it assembles the component stocks into products $y \in \R_+^{\Ny}$ to meet the orders. The unit ordering cost for the components is $c\in\R_+^{\Nx}$, while the unit selling price for products is $p\in\R_+^{\Ny}$. We use an incidence matrix $A\in\mathbb{N}^{\Nx\times\Ny}$ to model the componential topology where the entry $A_{ij}$ represents the quantity of component $i$ utilized by one unit of product $j$.
The firm may upgrade the customer to a higher-quality alternative if the requested product is unavailable. However, if the customer refuses any substitution, we have $\Ny=\Nd$, and the matrix $H$ becomes an identity matrix. Trivially, we can choose $U = I_{\Nd}$ to satisfy the second requirement in Assumption~\ref{a:model-param}, and it is the optimal solution for $U$.

In scenarios where product substitutions are feasible, determining the inventory level of components involves identifying the set of products $\mathcal{J}_i$ that can serve the demand from the $i$-th customer segment. This prioritization allows the firm to fulfill as much demand as possible by leveraging available resources and product options.
Without loss of generality, we assume that the initial $n_1=|\mathcal{J}_1|$ products are designated for the first customer segment, followed by the subsequent $n_2=|\mathcal{J}_2|$ products allocated to the second customer segment, and so on. This ordering scheme allows us to establish a clear correspondence between the products and the respective customer segments they serve. Furthermore, we can express $\Ny$ as the sum of the individual customer segment sizes, i.e., $\Ny=\sum_{i\in[\Nd]}n_i$. The technical matrix $H$ has the following expression:
\begin{equation*}\label{eq:H-def}
H=\left[
\begin{array}{cccc}
    \mathbf{1}_{n_1}^\top &  & & \\
     & \mathbf{1}_{n_2}^\top & & \\
     & & \ldots&\\
     & & & \mathbf{1}_{n_{\Nd}}^\top 
\end{array}
\right] \in\R_+^{\Nd \times \Ny},
\end{equation*}
where $\mathbf{1}_{n}$ is an $n$-dimensional column vector of all ones. For any $y\in\R_+^{\Ny},z \in\R_+^{\Nd}$ such that $Hy\le z$, we can always find one desired $U\in\R_+^{\Ny\times \Nd}$ to satisfy $HU\le I_{\Nd\times\Nd}$ and $y\le U z$. Consider $U$ in the form
\[
U=\left[
\begin{array}{cccc}
s_{\mathcal J_1} &  & & \\
& s_{\mathcal J_2} & & \\
& & \ldots&\\
& & & s_{\mathcal J_{\Nd}}
\end{array}
\right]\in\R_+^{\Ny\times\Nd},
\]
where $s_{\mathcal J_i}\in\R_+^{n_i}$ and $\sum_{j\in\mathcal J_i } s_{j}\le 1$. Then $U\in\mathcal U$, i.e., $U\geq 0$ and 
\[
HU=\diag\left(\sum_{j\in\mathcal J_1} s_j,\ldots,\sum_{j\in \mathcal J_{\Nd} } s_j\right)\leq I_{\Nd}. 
\]
Moreover, we can find $s_{\mathcal J_1},\ldots,s_{\mathcal J_{\Nd}}$ such that $y\leq U z$, e.g., for $j\in\mathcal J_i$, set $s_j=y_j / \sum_{k\in\mathcal J_i} y_k $.
Because  $ z_i\geq \sum_{k\in\mathcal J_i} y_k$, we further have
\[ \begin{aligned}
U z & =\left( z_1 s_{\mathcal J_1}^\top,\ldots,z_{\Nd} s_{\mathcal J_{\Nd}}^\top \right)^\top =\left( \frac{y_1}{\sum\limits_{k\in\mathcal J_1} y_k} z_1,~\ldots,~ \frac{y_{n_1}}{\sum\limits_{k\in\mathcal J_1} y_k} z_1,~\frac{y_{n_1+1}}{\sum\limits_{k\in\mathcal J_2} y_k} z_2,~\ldots\right)^\top  \geq y,
\end{aligned} \]
thus the second assumption in Assumption~\ref{a:model-param} is also satisfied.

\subsection{Two-echelon Inventory Problem}\label{appendix:inventory}

Our model framework extends to the two-echelon inventory problem, accommodating scenarios with either a single warehouse or multiple warehouses. We begin by considering the single warehouse case in \citet{ref:jonsson1987analysis}. A central warehouse determines its inventory levels at the start of each ordering cycle. Subsequently, the warehouse allocates products to various retail stores to maximize revenue. This control problem naturally fits a two-stage optimization structure.

Assume the warehouse manages inventory for $\Nx$ products. Before observing the actual demand, we must determine their warehouse inventory levels, represented by the vector $x \in \R_+^{\Nx}$. We denote the unit ordering costs by $c \in \R_+^{\Nx}$. Let $N_s$ denote the number of stores, where each store $i \in [N_s]$ offers a subset of products $\mathcal{J}_i \subseteq [\Nx]$. For each store $i$, the inventory levels and demands for its products are represented by vectors $y^i \in \R_+^{|\mathcal{J}_i|}$ and $d^i \in \R_+^{|\mathcal{J}_i|}$, respectively. The selling prices for products in each store $i$ are given by the vector $p^i \in \R_+^{|\mathcal{J}_i|}$. Consequently, there are a total of $\Ny = \sum_{i=1}^{N_s} |\mathcal{J}_i|$ store-product combinations. We define the overall inventory level vector as $y = (y^1; \ldots; y^{N_s})$, the overall demand vector as $d = (d^1; \ldots; d^{N_s})$, and the overall unit selling price vector as $p = (p^1; \ldots; p^{N_s})$.

We introduce the topology matrix $A \in \R_+^{\Nx \times \Ny}$ to capture the relationship between products and store-product combinations. This matrix is pivotal in determining the allocation plan. Specifically, consider the first $|\mathcal{J}_1|$ columns of $A$, at $(k,j)\in[\Nx]\times[|\mathcal{J}_1|]$, the element is defined as 
\[
A_{kj} = \begin{cases}
    1 & \text{if $k$ is the $j$-th element in $\mathcal{J}_1$}, \\
    0 & \text{otherwise.}
\end{cases}
\]
The next $|\mathcal{J}_2|$ columns of $A$ are defined to be
\[
A_{kj} = \begin{cases}
    1 & \text{if $k$ is the $(j-|\mathcal{J}_1|)$-th element in $\mathcal{J}_2$}, \\
    0 & \text{otherwise,}
\end{cases}
\]
where $k\in[\Nx]$ and $j\in\{ |\mathcal{J}_1| + 1, \ldots, |\mathcal{J}_1| + |\mathcal{J}_2| \}$. The next $|\mathcal{J}_3|$ columns of $A$ are defined to be
\[
A_{kj} = \begin{cases}
    1 & \text{if $k$ is the $(j-|\mathcal{J}_1|-|\mathcal{J}_2|)$-th element in $\mathcal{J}_3$}, \\
    0 & \text{otherwise,}
\end{cases}
\]
where $k\in[\Nx]$ and $j\in\{ |\mathcal{J}_1| + |\mathcal{J}_2| + 1, \ldots, |\mathcal{J}_1| + |\mathcal{J}_2| + |\mathcal{J}_3|\}$.
This procedure is applied sequentially to cover all $\Ny = \sum_{i\in[N_s]} |\mathcal{J}_i|$ columns, thereby fully constructing the topology matrix $A$.

With these definitions, the second-stage optimization problem has the formulation
\[
g(x,d) \Let \left\{
\begin{array}{cl}
    \min & -p^\top y \\
    \st  & y\in\R_+^{\Ny} \\
        & Ay\leq x,~ y\leq d,
    \end{array}
\right.
\]
where the matrix $H$ is the identity matrix, and the constraints ensure that the allocations do not exceed the available inventory $x$ or the realized demand $d$.

The modeling paradigm can be generalized to multi-warehouse, multi-store systems as explored in \citet{ref:miao2022asymptotically}. In such extensions, the decision variables $x$ and $y$ have higher dimensions to capture the replenishment structure between stores and warehouses and the topology matrix $A$. Additionally, the topology matrix $H$ must be defined to reflect scenarios where store $i$ can replenish product $j$ from multiple warehouses, introducing complexities similar to the substitution effects observed in Assemble-to-Order (ATO) problems discussed in Section~\ref{sec:ato}. Consequently, the model parameters would also satisfy Assumption~\ref{a:model-param}.

\section{Simulation Details}\label{appendix:simulation}
\subsection{Synthetic Instances Construction}\label{appendix:instances}

In the synthetic experiments, we focus on the assemble-to-order problems with no product flexibility, i.e., a one-to-one correspondence between the products and the demand types. Hence, the products and demands have the same dimension denoted by $N = \Ny = \Nd $. The recourse matrix $H$ is the identity matrix $I$, and the constraint $Hy\le d$ reduces to $y\le d$. The firm first maintains component stocks $x \in \R_+^{\Nx}$ and then assembles the component stocks into products $y \in \R_+^{N}$ to meet the demand upon receiving orders $d\in\R_+^{N}$. The unit ordering cost for the components is $c\in\R_+^{\Nx}$, while the unit selling price for products is $p\in\R_+^{N}$. The incidence matrix $A\in\mathbb{N}^{\Nx\times N}$ models the componential topology where the entry $A_{ij}$ represents the quantity of component $i$ utilized by one unit of product~$j$.  

We generate a comprehensive set of 640 problem instances characterized by $(c,p, A,b,\theta)$ to evaluate the performance of our proposed mechanisms and decision-making strategies. The parameter $\theta$ collects the mean and marginal standard deviation information, leading to an ambiguity set $\mathcal A(\theta)$ as defined in~\eqref{eq:ambiguity}.
We create 80 combinations of $(c,p,A)$ following~\citet{ref:devalve2020primal}, along with four sets of parameters $\theta=(\mu,\sigma)$ as follows.

\noindent\textbf{Model parameters $(c,p,A)$.} We use eight choices for the assembly structures: we use five structures in~\cite{ref:devalve2020primal} where the number of products ranges from $2$ to $14$; additionally, we use three structures defined as
\[
A = \begin{bmatrix}
     I_{(N-1) \times (N-1)}& \nu\mathbf{1}  \\
    \vartheta^\top & \nu
\end{bmatrix},
\]
where $N\in\{10,20,30\}$, $\nu=2$ and $\vartheta=(1,\ldots,N-1) \in \R^{N-1}$. For the unit cost of components, we consider two sets of $c$:
\begin{enumerate}[label=(\roman*)]
    \item $c= (1, 1, 1, 1, 1, \ldots)$,
    \item $c= (1.5, 2, 1.5, 2, \ldots)$.
\end{enumerate}
Given unit cost $c$ and assemble-to-order matrix $A$, we define the unit price of $j$-th product as $p_j=\gamma_j A_j^\top c$ for $j \in [N]$, where $A_j^\top c$ represents the total component cost of product $j$ and $\gamma_j$ is its markup. We use four sets of markups $\gamma\in\R_+^N$ in~\citet{ref:devalve2020primal}:
\begin{enumerate}[label=(\roman*)]
    \item $ \gamma = ( 1.1, 1.1, 1.1, 1.1, \ldots)$,
    \item $ \gamma = ( 3, 3, 3, 3, \ldots)$,
    \item $ \gamma = ( 2, 1.5, 2, 1.5, \ldots) $,
    \item $ \gamma =(3, 1.1, 3, 1.1, \ldots)$.
\end{enumerate}

\noindent \textbf{Uncertainty Information $\theta$.} For the moment information $\theta=(\mu,\sigma)$, we choose five mean and marginal variance combinations:
\begin{enumerate}[label=(\roman*)]
    \item $\mu=(20,20,20,\ldots)$ and $\sigma^2=(20,20,20,\ldots)$,
    \item $\mu=(20,30,20,30,\ldots)$ and $\sigma^2=(20,20,20,\ldots)$,
     \item $\mu=(20,40,20,40,\ldots)$ and $\sigma^2=(20,40,20,40,\ldots)$,
    \item $\mu=(20,40,20,40,\ldots)$ and $\sigma^2=(20,60,20,60,\ldots)$,
    \item $\mu=(20,30,40,20,30,40,\ldots)$ and $\sigma^2=(20,40,60,20,40,60,\ldots)$.
\end{enumerate}
For the budget $b$, we consider two scenarios: $b = 0.5 c^\top A\mu $ and $b=2 c^\top A\mu$,
which represents the financial budget available for component stocks and defines the first-stage feasible region as $\mathcal X(b) \triangleq \{ x\in\R_+^{\Nx}: c^\top x \le b\}$. We assess the performance of our proposed mechanisms and decisions on these 640 instances to ensure a comprehensive evaluation.

\subsection{Computational Time Comparison against TLDR Approximation}\label{appendix:time-comparison}
We investigate the computational time of our decentralized approach and the TLDR approximation under different dimensions of uncertainty. We consider an assemble-to-order system with $N$ components and $N$ products. The assembly structure has the topology matrix
\[
A=\begin{bmatrix}
    I_{(N-1) \times (N-1)}& \nu\mathbf{1}  \\
    \vartheta^\top & \nu
\end{bmatrix},
\]
where $\nu=2$ and $\vartheta=(1,\ldots,N-1) \in \R^{N-1}$. The problem size relies on the uncertainty dimension $N$, which also represents the number of products. We increment the dimension $N$ from 10 to 500 in steps of 10. For each value of $N$, we construct 40 instances characterized by the parameters $(c,p,\theta)$ following the procedures outlined in Appendix~\ref{appendix:instances}. To ensure the reliability of our analysis, each instance configuration is executed 10 times to obtain stable timing measurements. 
\begin{figure}[htbp]
    \centering
    \includegraphics[width=0.8\linewidth]{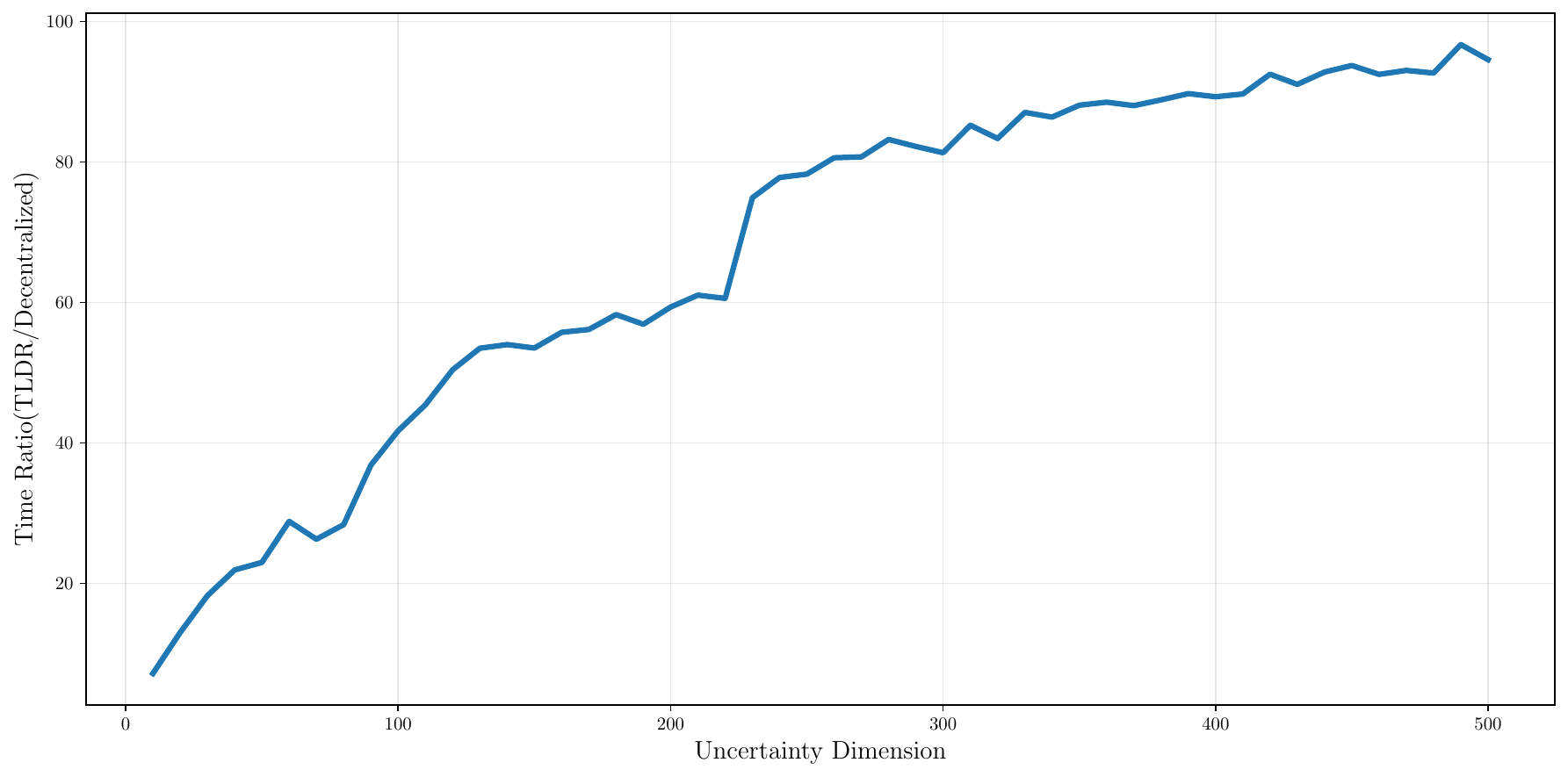}
    \caption{Computational time ratio between the TLDR approximation and our decentralized approach under different uncertainty dimensions. A higher ratio indicates a greater computational advantage of our decentralized approach.}
    \label{fig:time-comparison}
    
\end{figure}

We plot the average computational time ratio between the TLDR approximation and our decentralized approach under different uncertainty dimensions in Figure~\ref{fig:time-comparison}. The TLDR approximation involves solving a second-order cone program, whereas our decentralized approach only requires solving a linear program. Consequently, our approach consistently demonstrates superior computational performance across all tested uncertainty dimensions. At dimension 500, the decentralized approach executes approximately 100 times faster than the TLDR approximation. This remarkable scaling efficiency indicates that our approach is particularly well-suited for large-scale applications. Additionally, the computational advantage of our approach increases with the uncertainty dimension, as evidenced by the upward trend in the ratio. This pattern suggests that our decentralized approach becomes increasingly beneficial as the size of the uncertainty grows.

\subsection{Robustness and Data-Driven Performance}\label{appendix:simulation-robust}
Before presenting the results, we explain how we generate training and testing datasets and conduct cross-validation to find parameters $(\varsigma\opt,\tau\opt)$.

\noindent\textbf{Data generation.}
To control for sampling bias, we first generate a dataset $\mathcal{D} = \{ d_\omega \}$ from the standard multivariate normal distribution $\mathcal{N}(0, I)$, where we use the subscript $\omega$ as the sample index. These samples are then transformed linearly to be $ \Sigma^{\frac{1}{2}} d_\omega + \mu$ to follow the distribution $\mathcal{N}(\mu,\Sigma)$. Lastly, we truncate the samples to $[0, \infty)^{\Nd}$ to ensure non-negativity. We derive the training dataset $\mathcal{D}_{\tr}$ and the testing dataset $\mathcal{D}_{\te}$ by setting $\Sigma$ to be $\Sigma_{\tr}$ and $\Sigma_{\te}$, respectively. Consequently, their correlation coefficients are the primary distinction between training and testing distribution. Further, the number of samples in the training dataset $\mathcal{D}_{\tr}$ varies as $n_{\tr}\in\{100,200,400\}$ to investigate the effect of sample size. The testing dataset $\mathcal{D}_{\te}$ has $n_{\te} = 1000$ samples.

\noindent\textbf{Cross-validation procedure.} We implement 5-fold cross-validation on $\mathcal D_{\tr}$, creating five pairs of training and validation datasets. For each training fold, we estimate its mean and marginal standard deviation $(\hat\mu,\hat\sigma)$, based on which we construct the mechanism $M_{\varsigma, \tau}$ as defined in~\eqref{eq:Mtau}.

To address the variation in empirical moments across training folds, which affects the maximum feasible values of $\varsigma$ and $\tau$, we parameterize these variables as proportions of their maximum feasible values. Specifically, we set $\varsigma=\kappa\hat\sigma$ and $\tau=\eta \hat\tau_{\max}$, where $\hat\tau_{\max}$ depends on $\varsigma$ and $\hat\mu$. We then search for optimal proportions $(\kappa\opt,\eta\opt)$ through a grid search over $\kappa,\eta\in\{0,0.05,0.10,\ldots,1\}$. For each proportion combination $(\kappa,\eta)$ and training fold, we first compute the first-stage decision $x_{\varsigma,\tau}$ using $(\varsigma,\tau) = (\kappa\hat\sigma,\eta\hat\tau_{\max})$. Under this first-stage decision, we then solve problem~\eqref{eq:V-bar} to find the optimal TLDR $\mathbf{y}_{\varsigma,\tau}: d\mapsto \min\{ v_{\varsigma,\tau}, d \}$. We evaluate the validation performance through the sum of the first-stage cost and the CVaR of second-stage costs $-p^\top\mathbf{y}_{\varsigma,\tau}(\tilde d)$. The CVaR computation follows an efficient reordering procedure: we first calculate the realized costs $-p^\top\mathbf{y}_{\varsigma,\tau}(d_\omega)$ for each sample $\omega$ in the validation fold, sort them in descending order, and compute the average of the $\lceil\beta n_{\mathrm{val}}\rceil$ largest values, where $\beta$ denotes the CVaR confidence level and $n_{\mathrm{val}}$ represents the validation fold size. Notably, each iteration in the cross-validation procedure requires only two linear programs: one for the first-stage decision and another for the second-stage TLDR.
We select $(\kappa\opt,\eta\opt)$ with the best average performance across five validation datasets. At last, we apply $(\kappa\opt,\eta\opt)$ to the training dataset's estimated moments $(\mu_{\tr},\sigma_{\tr})$, that is, $(\varsigma\opt,\tau\opt) = (\kappa\opt \sigma_{\tr},\eta\opt \tau_{\max})$ where $\tau_{\max}$ is determined by $(\mu_{\tr},\varsigma\opt)$. These parameters are then used to construct $M_{\varsigma\opt,\tau\opt}$ and determine the CV-tuned solution $x_{\varsigma\opt,\tau\opt}$.

\noindent \textbf{Robustness Index under different training sample sizes.} Figure~\ref{fig:robust-cvar_100} shows the Robustness Index across 320 instances under different budgets and correlation coefficient shifts with training sample size $n_{\tr} =100$, and Figure~\ref{fig:robust-cvar_400} plots the results with training sample size $n_{\tr} =400$. Both figures exhibit asymmetric outlier distributions, as with training sample size $n_{\tr} =200$. When $n_{\tr}=100$, the positive outliers reach up to 1.0 (higher than 0.8 when $n_{\tr}\in\{200,400\}$) under large positive correlation coefficient shifts.

\begin{figure}[!ht]
    \centering
       \includegraphics[width=0.95\linewidth]{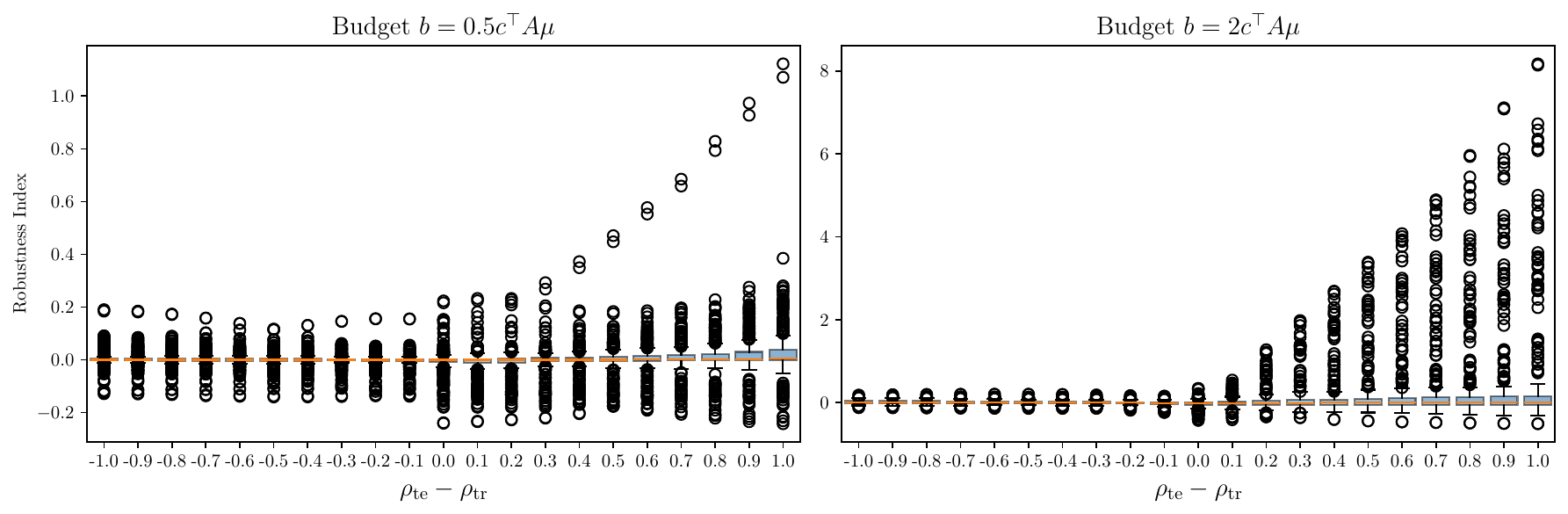}
    \caption{The Robustness Index of CV-tuned solution $x_{\varsigma\opt,\tau\opt}$ across 320 instances under different correlation coefficient shifts $\rho_{\te} - \rho_{\tr}$ for two budget scenarios when the training dataset has $n_{\tr}=100$ samples. A higher positive Robustness Index implies greater out-of-sample performance improvement over the SAA method.}
    \label{fig:robust-cvar_100}
    
\end{figure}

\begin{figure}[!ht]
    \centering
       \includegraphics[width=0.95\linewidth]{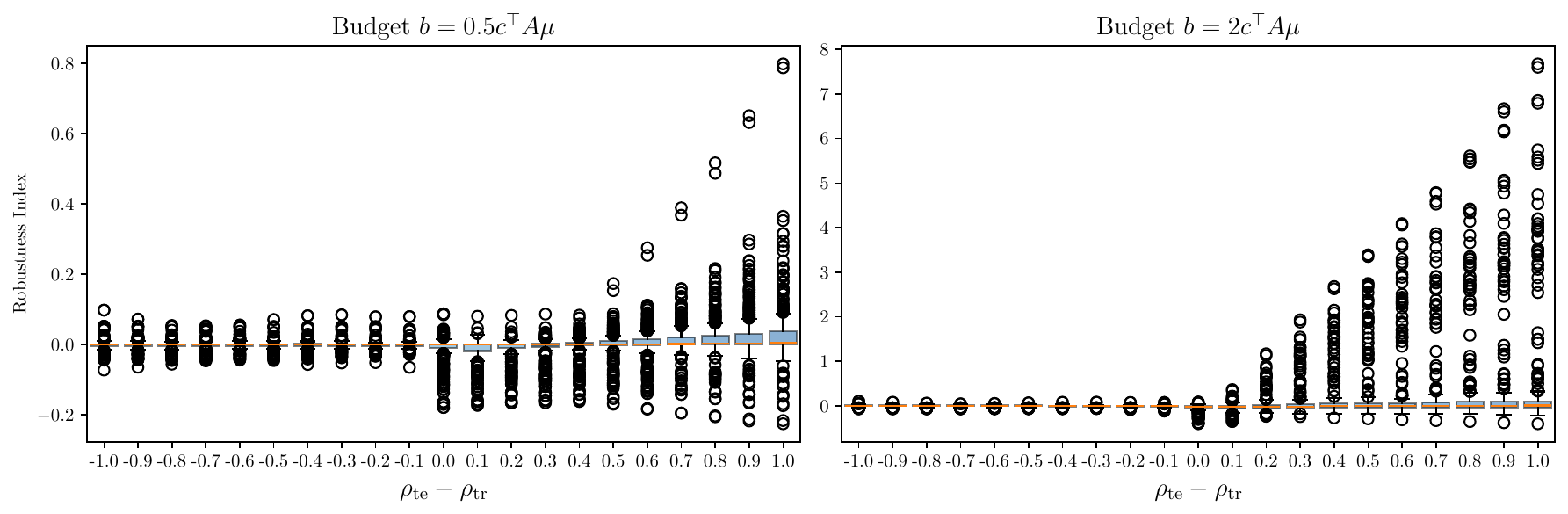}
    \caption{The Robustness Index of CV-tuned solution $x_{\varsigma\opt,\tau\opt}$ across 320 instances under different correlation coefficient shifts $\rho_{\te} - \rho_{\tr}$ for two budget scenarios when the training dataset has $n_{\tr}=200$ samples. A higher positive Robustness Index implies greater out-of-sample performance improvement over the SAA method.}
\label{fig:robust-cvar_400}
    
\end{figure}

\noindent \textbf{One-sided sign test.} We use a one-sided sign test to analyze the statistical significance of the out-of-sample performance improvement over the SAA method. Define the random variable 
\[
\tilde Z_{\varsigma\opt, \tau\opt} \triangleq J(x_{\varsigma\opt, \tau\opt}( \mathcal{D}_{\tr} ) , \mathcal{D}_{\te}) - J(x_{\text{SAA}} ( \mathcal{D}_{\tr} ), \mathcal{D}_{\te}),
\]
where $J(x, \mathcal{D}_{\te}) \Let c^\top x + \hat\PP_{\te}\text{-CVaR}_{0.05} ( g(x,\tilde d) )$.
The random variable $\tilde Z_{\varsigma\opt, \tau\opt}$ depends on the parameters $(c,p,A,b,\theta,\mathcal{D_{\tr}}, \mathcal{D_{\te}} )$, where $\mathcal{D_{\tr}}$ and $\mathcal{D_{\te}}$ are further determined by the training sample size $n_{\tr}$ and the correlation coefficients $\rho_{\tr},\rho_{\te}$.
For each combination of $ ( b, |\rho_{\te} - \rho_{\tr} |, n_{\tr} )$, we have 640 observations of $Z_{\varsigma\opt, \tau\opt} $, based on which we apply a one-sided sign test to assess whether there is a significant out-of-sample performance improvement over the SAA method. 

\begin{table}[htbp]
  \centering
  
  \caption{The $p$-values of the two one-sided tests under different correlation coefficient shifts $ \rho_{\te} - \rho_{\tr}$ for two budget scenarios with training sample size $n_{\tr} =100$. Bold values indicate that the $p$-value is smaller than 0.05. }  \label{tab:signed-test-100}
    \begin{tabular}{cccccccc}
    \toprule
    \multirow{2}[2]{*}{$\rho_{\te} - \rho_{\tr} $ } & \multicolumn{2}{c}{Budget $b= 0.5c^\top A\mu$}       &       & \multicolumn{2}{c}{ Budget $b= 2c^\top A\mu$ } \\
\cmidrule{2-3}\cmidrule{5-6}     & $\mathcal{H}_+$    & $\mathcal{H}_-$   &   &  $\mathcal{H}_+$     & $\mathcal{H}_-$     \\
\cmidrule{1-3}\cmidrule{5-6}  
        -1.0    & \textbf{3.38E-05} & 1.00E+00 &       & \textbf{3.80E-10} & 1.00E+00 \\
    -0.9  & \textbf{3.38E-05} & 1.00E+00 &       & \textbf{3.16E-09} & 1.00E+00 \\
    -0.8  & \textbf{3.14E-04} & 1.00E+00 &       & \textbf{1.21E-08} & 1.00E+00 \\
    -0.7  & \textbf{7.01E-04} & 1.00E+00 &       & \textbf{3.16E-09} & 1.00E+00 \\
    -0.6  & \textbf{7.01E-04} & 1.00E+00 &       & \textbf{1.54E-06} & 1.00E+00 \\
    -0.5  & \textbf{1.49E-03} & 9.99E-01 &       & \textbf{2.06E-04} & 1.00E+00 \\
    -0.4  & \textbf{3.04E-03} & 9.98E-01 &       & 6.55E-02 & 9.48E-01 \\
    -0.3  & \textbf{2.14E-03} & 9.99E-01 &       & 6.93E-01 & 3.48E-01 \\
    -0.2  & \textbf{1.92E-02} & 9.85E-01 &       & 1.00E+00 & \textbf{1.34E-04} \\
    -0.1  & \textbf{1.09E-02} & 9.92E-01 &       & 1.00E+00 & \textbf{1.52E-07} \\
    0.0     & 7.23E-01 & 3.04E-01 &       & 1.00E+00 & \textbf{1.28E-21} \\
    0.1   & 7.99E-01 & 2.34E-01 &       & 1.00E+00 & \textbf{4.97E-07} \\
    0.2   & 1.44E-01 & 8.80E-01 &       & 9.99E-01 & \textbf{1.49E-03} \\
    0.3   & 8.11E-02 & 9.34E-01 &       & 9.48E-01 & 6.55E-02 \\
    0.4   & \textbf{1.09E-02} & 9.92E-01 &       & 7.31E-01 & 3.07E-01 \\
    0.5   & \textbf{2.14E-03} & 9.99E-01 &       & 6.10E-01 & 4.33E-01 \\
    0.6   & \textbf{2.06E-04} & 1.00E+00 &       & 6.10E-01 & 4.33E-01 \\
    0.7   & \textbf{2.09E-05} & 1.00E+00 &       & 4.33E-01 & 6.10E-01 \\
    0.8   & \textbf{8.24E-08} & 1.00E+00 &       & 3.07E-01 & 7.31E-01 \\
    0.9   & \textbf{8.63E-11} & 1.00E+00 &       & 3.48E-01 & 6.93E-01 \\
    1.0     & \textbf{7.21E-13} & 1.00E+00 &       & 2.34E-01 & 7.99E-01 \\
    \bottomrule
    \end{tabular} 
    
\end{table}

Note that a smaller negative value of $\tilde Z_{\varsigma\opt, \tau\opt}$ indicates better robustness of our first-stage decision $x_{\varsigma\opt, \tau\opt}$ than the first-stage SAA solution $x_{\text{SAA}}$. Hence, if $x_{\varsigma\opt, \tau\opt}$ is likely to outperform $x_{\text{SAA}}$, we have $\text{Pr}(\tilde Z_{\varsigma\opt, \tau\opt} < 0 ) > 0.5 $ and vice versa. This motivates us to test the null hypothesis
\[
\mathcal H_{\mathrm{null}} : \text{Pr}(\tilde Z_{\varsigma\opt, \tau\opt} < 0 ) = 0.5,
\]
against the alternative hypotheses
\[
\mathcal H_{+} : \text{Pr}(\tilde Z_{\varsigma\opt, \tau\opt} < 0 ) > 0.5.
\quad \text{and} \quad
\mathcal H_{-} : \text{Pr}(\tilde Z_{\varsigma\opt, \tau\opt} < 0 ) < 0.5,
\]
respectively.
The $p$-value is a statistical measure that helps decide whether to reject the null hypothesis. A smaller $p$-value indicates more substantial evidence against the null hypothesis in favor of the alternative hypothesis. 
\begin{table}[htbp]
  \centering
  
  \caption{The $p$-values of the two one-sided tests under different correlation coefficient shifts $ \rho_{\te} - \rho_{\tr}$ for two budget scenarios with training sample size $n_{\tr} =400$. Bold values indicate that the $p$-value is smaller than 0.05. }  \label{tab:signed-test-400}
    \begin{tabular}{cccccccc}
    \toprule
    \multirow{2}[2]{*}{$\rho_{\te} - \rho_{\tr} $ } & \multicolumn{2}{c}{Budget $b= 0.5c^\top A\mu$}       &       & \multicolumn{2}{c}{ Budget $b= 2c^\top A\mu$ } \\
\cmidrule{2-3}\cmidrule{5-6}     & $\mathcal{H}_+$    & $\mathcal{H}_-$   &   &  $\mathcal{H}_+$     & $\mathcal{H}_-$     \\
\cmidrule{1-3}\cmidrule{5-6}  
        -1.0    & \textbf{8.11E-02} & 9.34E-01 &       & \textbf{2.06E-04} & 1.00E+00 \\
    -0.9  & 1.20E-01 & 9.01E-01 &       & \textbf{2.06E-04} & 1.00E+00 \\
    -0.8  & 1.20E-01 & 9.01E-01 &       & \textbf{4.72E-04} & 1.00E+00 \\
    -0.7  & 4.33E-01 & 6.10E-01 &       & \textbf{7.01E-04} & 1.00E+00 \\
    -0.6  & \textbf{9.92E-02} & 9.19E-01 &       & \textbf{7.01E-04} & 1.00E+00 \\
    -0.5  & \textbf{2.01E-01} & 8.29E-01 &       & \textbf{8.05E-03} & 9.94E-01 \\
    -0.4  & 4.78E-01 & 5.67E-01 &       & 3.07E-01 & 7.31E-01 \\
    -0.3  & 3.90E-01 & 6.52E-01 &       & 8.80E-01 & 1.44E-01 \\
    -0.2  & 7.66E-01 & 2.69E-01 &       & 9.85E-01 & \textbf{1.92E-02} \\
    -0.1  & 8.29E-01 & 2.01E-01 &       & 1.00E+00 & \textbf{2.66E-06} \\
    0.0     & 1.00E+00 & \textbf{1.16E-04} &       & 1.00E+00 & \textbf{8.44E-40} \\
    0.1   & 1.00E+00 & \textbf{3.14E-04} &       & 1.00E+00 & \textbf{1.66E-12} \\
    0.2   & 9.96E-01 & \textbf{5.88E-03} &       & 1.00E+00 & \textbf{3.38E-05} \\
    0.3   & 4.78E-01 & 5.67E-01 &       & 9.94E-01 & \textbf{8.05E-03} \\
    0.4   & \textbf{1.92E-02} & 9.85E-01 &       & 8.80E-01 & 1.44E-01 \\
    0.5   & \textbf{2.06E-04} & 1.00E+00 &       & 6.10E-01 & 4.33E-01 \\
    0.6   & \textbf{2.09E-05} & 1.00E+00 &       & 1.44E-01 & 8.80E-01 \\
    0.7   & \textbf{2.77E-07} & 1.00E+00 &       & \textbf{2.51E-02} & 9.81E-01 \\
    0.8   & \textbf{1.66E-12} & 1.00E+00 &       & 5.24E-02 & 9.59E-01 \\
    0.9   & \textbf{1.29E-18} & 1.00E+00 &       & \textbf{4.15E-02} & 9.68E-01 \\
    1.0     & \textbf{1.52E-19} & 1.00E+00 &       & 5.24E-02 & 9.59E-01 \\
    \bottomrule
    \end{tabular} 
    
\end{table}

Table~\ref{tab:signed-test-100} and~\ref{tab:signed-test-400} report the $p$-values under the training sample size $n_{\tr}=100$ and $n_{\tr}=400$, respectively. The results with $n_{\tr}=100$ follow similar patterns to those observed with $n_{\tr}=200$ reported in the main text. When $n_{\tr}$ increases to 400, in the small budget scenario, the SAA method yields statistically significant results ($p$-values less than 0.05) for small correlation shifts $\rho_{\te}-\rho_{\tr}\in\{0,0.1,0.2\}$. This suggests that SAA becomes more effective with larger training datasets, but only if there are small correlation shifts.

\end{document}